\theoremstyle{remark}
\newtheorem{example}{Example}[section]
\newtheorem{remark}[example]{Remark}
\theoremstyle{definition}
\newtheorem{definition}[example]{Definition}
\newtheorem{notation}[example]{Notation}
\newtheorem{goal}[example]{Goal}
\theoremstyle{plain}
\newtheorem{proposition}[example]{Proposition}
\newtheorem{corollary}[example]{Corollary}
\newtheorem{theorem}[example]{Theorem}
\newtheorem*{theorem*}{Theorem}
\newtheorem{lemma}[example]{Lemma}
\newcommand{\DeclareMyOperator}[1]{%
	\expandafter\DeclareMathOperator\csname #1\endcsname{#1}
}
\newcommand{\DeclareMathOperators}{\forcsvlist{\DeclareMyOperator}}
\DeclareMathOperator{\im}{Im}
\DeclareMathOperator{\fil}{filt}
\DeclareMathOperator{\Spec}{Spec}
\DeclareMathOperator{\Mod}{Mod}
\DeclareMathOperator{\trdeg}{trdeg}
\DeclareMathOperator{\rmH}{H}
\newcommand{\shom}{{\mathcal Hom}}
\newcommand{\sC}{{\mathscr C}}
\newcommand{\sE}{{\mathscr E}}
\newcommand{\sF}{{\mathscr F}}
\newcommand{\sG}{{\mathscr G}}
\newcommand{\tsG}{{\mathscr {\tilde G}}}
\newcommand{\sL}{{\mathscr L}}
\newcommand{\sM}{{\mathscr M}}
\newcommand{\sN}{{\mathscr N}}
\newcommand{\sO}{{\mathscr O}}
\newcommand{\sX}{{\mathscr X}}
\newcommand{\sY}{{\mathscr Y}}
\newcommand{\sT}{{\mathscr T}}
\newcommand{\bR}{{\textbf R}}
\newcommand{\fG}{{\mathfrak G}}
\DeclareMathOperator{\Coh}{\mathcal{C}\mathit{\!o\!h}}
\DeclareMathOperator{\Bun}{\mathcal{B}\mathit{\!u\!n}}
\DeclareMathOperator{\Nil}{\mathcal{N}\mathit{\!\!il}}
\newcommand{\sHom}{{\mathcal Hom}}
\newcommand{\sEnd}{{\mathcal End}}
\newcommand{\Fields}{\mathsf{Fields}}
\newcommand{\Sets}{\mathsf{Sets}}
\newcommand{\E}{\mathcal{E}}
\newcommand{\F}{\mathcal{F}}
\newcommand{\bbZ}{\mathbb{Z}}
\newcommand{\Gm}{\mathbb{G}_{\mathrm{m}}}
\newcommand{\QQ}{{\mathbb Q}}
\renewcommand{\AA}{{\mathbb A}}
\newcommand{\ZZ}{{\mathbb Z}}
\newcommand{\T}{\mathrm{T}}
\newcommand{\Y}{\mathrm{Y}}
\newcommand{\G}{\mathrm{G}}
\newcommand{\bn}{{\bf n}}
\newcommand{\bA}{{\bf A}}
\newcommand{\cf}{{\rm CF}}
\newcommand{\fn}{\mathfrak{ n}}
\DeclareMathOperator{\h}{H}
\begin{document}
\title[Essential dimension of Parabolic Bundles]{On the essential ($p$)-dimension of parabolic bundles on curves}

\author[A. Dhillon]{Ajneet Dhillon}
\address{Department of Mathematics, University of Western Ontario, London, Ontario N6A 5B7, Canada}
\email{adhill3@uwo.ca}
\author[D. Valluri]{Dinesh Valluri}
\address{Department of  Computer Science, University of Western Ontario, London, Ontario N6A 5B7, Canada}
\email{dvalluri@uwo.ca}

\subjclass[2000]{14D23, 14D20}
\keywords{Essential dimension, parabolic vector bundle, curve}
\begin{abstract}
 We study the essential dimension and essential $p$-dimension of the moduli stack of vector bundles over a smooth orbifold curve containing a rational point. We improve the known bounds
 on this essential dimension and obtain an equality modulo the
 famous conjecture of Colliot-Thelene, Karpenko and Merkurjev. In the case of essential $p$-dimension we obtain an equality.
\end{abstract}
\maketitle
\section{Introduction}

Roughly, the essential dimension of a family of algebraic objects is the number of parameters needed to parameterise a generic family of such objects. This heuristic definition points to its central role it plays in moduli problems. A precise definition can be obtained by observing that there are two ways of defining the dimension of an algebraic variety. Firstly, there is the Krull dimension and secondly one can define dimension as transcendence degree of the function field over the base field. By lifting the second definition to the category of algebraic stacks one arrives at the precise notion of essential dimension. This intriguing invariant is difficult to compute. There is a variation known as the essential $p$-dimension, which is roughly the essential dimension ignoring prime to $p$ information, that is easier to compute. We will recall both of these definitions in section 4 below, see also \cite{reichstein} and \cite{merkurjev}. 

The purpose of this paper is to study these invariants for the moduli stack of vector bundles on an orbifold curve, with coarse moduli of genus at least two. We extend the results of \cite{crelle} in two ways. Firstly, we extend them to essential $p$- dimension. 
This has the virtue of being able to state and prove an equality that is not conjectural, see \ref{t:main}.
Secondly we consider smooth projective curves with an orbitfold structure, i.e. certain kinds of root stacks. The problem is divided into two pieces as in \cite{crelle}.
Given a vector bundle $\sE$ on an orbifold curve, it corresponds to a point of a moduli stack of vector bundles. Hence there is 
a corresponding residual gerbe $\sG(\sE)$ with coarse moduli space $k(\sE)$, the field of moduli of $\sE$.
The essential dimension of $\sE$ breaks down into two component pieces.
 That of understanding the essential dimension of the residual gerbe over the field of moduli and then understanding the transcendence degree of the field of  moduli over the base field. 
 
 The second of these two steps is carried out in section five, roughly amounts to understanding the tangent space to the automorphism group of a parabolic bundle. We present a new approach to this, different to that in\cite{crelle}, using deformation theory and filtered derived categories. This approach is useful in that it has potential to generalise to principal bundles over  groups other than
 the general linear group.
 
 The results of this paper are confined to dimension one, due to the fact that the dimensions of the stacks that we consider can be computed via Euler characteristics. This is no longer the case in higher dimension.  Extensions of Riemann-Roch  to Deligne- Mumford stacks, 
 see \cite{toen} and \cite{edidin}
 play a pivotal role. We recall this theorem in section 3
and compute its terms in the case of an orbifold curve.

The essential dimension of vector bundles on an orbifold curve was first considered in
\cite {nicole}. The results of this paper give a vast improvement over the results in \cite {nicole}.
For example, let's consider a smooth projective curve $X$ with a single orbifold point $x\in X$ point with orbifold structure
$\bbZ/n\bbZ$. Vector bundles  on this curve acquire an action of the group $\bbZ/n\bbZ$
over the orbifold point. Hence by ordering the eigenvalues of the action we obtain a filtration of the orbifold point, that is a parabolic bundle. Let $n_i$ be the dimensions of these vector spaces in this filtration so that $n_0=r$ the rank of the vector bundle being 
considered. Suppose that the vector bundle has degree $d$ and set $h=\gcd(n_i,r,d)$. Let $\Bun^{r,d}_{\bn}$ be the moduli stack
of vector bundles with prescribed data where $\bn=(n_0\ge n_1\ge \ldots \ge n_{e})$. In this paper we show that
\[
\ed(\Bun_{\bn}^{r,d})\le r^2(g-1)+1+ \Flag_\bn + \sum_{p|h} (p^{v_p(h)}-1).
\]
The last sum is over primes dividing $h$. The bound in \cite{nicole} is more difficult to describe, but roughly it is of the form
\[
\ed(\Bun_{\bn}^{r,d})\le r^2(g-1)+1+ \Flag_\bn + F(r).
\]
where the function $F(r)$ is quadratic in the rank $r$, see \cite[12.1]{nicole} for details.

In section 2 of the paper we start by recalling the parabolic-orbifold correspondence. This is an equivalence of categories between vector bundles on a root stack and vector bundles with filtration on its coarse moduli space. The third section is an overview of Riemann-Roch for orbifolds. We perform some calculations that will be useful later. In section 4 we recall essential dimension and its variant 
essential $p$-dimension. We recall the conjecture in \cite{ct} and state its $p$-analogue, see \ref{e:gerbe}. Some results from
\cite{crelle} are recalled and extended to essential $p$-dimension and orbifolds. The fifth section studies the field of moduli of
a parabolic bundle. We use deformation theory methods to understand and bound the transcendence degree of the field of moduli. This is in contrast to the global methods in \cite{crelle}. As stated earlier, this may prove to be useful as these local calculations are more apt to generalisation to other groups. The final section, section six, states and proves our main result \ref{t:main}.

\section*{Acknowledgements} The first named author would like to thank Kirill Zainoulline for suggesting that we consider essential 
$p$-dimension. The first named author thanks NSERC for funding.

\section{The parabolic-orbifold correspondence}

Let $X$ be a scheme and $\sL$ a line bundle on $X$ with section $s\in\rmH^0(X,\sL)$. If $e$ is a positive integer,  we may form the \emph{root stack}
\[
q:X_{\sL,s,e}\rightarrow X,
\]
see \cite{borne}. A lift of an $S$-point, $f:S\rightarrow X$ to the root stack
amounts to a line bundle with section $(\sM,t)$ on $S$ and an isomorphism 
$\alpha:\sM^{\otimes e}\rightarrow f^*\sL$ sending $t^e$ to $s$. The automorphisms are the obvious ones. It follows that there is a universal root line bundle
$\sN$ on $\sX_{\sL,s,e}$ whose $e$th power is the pullback of $\sL$.
We will refer to $e$ as the \emph{ramification index} of the construction.

On the other hand the data $(\sL,s)$ and $e$ determine a notion of \emph{parabolic vector bundle}
on $X$. This is a vector bundle $\sE$ together with a filtration 
\[
\sE_0=\sE\supseteq \sE_1\supseteq \ldots \supseteq \sE_e
\]
and an isomorphism $\sE\otimes \sL^{-1} \cong \sE_e$ such that the composition
\[
\sE\otimes \sL^{-1} \cong \sE_e \hookrightarrow \sE_0\cong \sE\otimes \sO
\]
arises from the section. There is a corresponding category $\Par(\sL,s,e)$ of 
parabolic vector bundles. We refer the reader to \cite{borne} for details.

\begin{theorem}\label{t:correspondence}
	Let $X$ be a noetherian scheme. There is an equivalence of categories
	\[ \Par(\sL,s,e)\cong \Vect(X_{\sL,s,e}) \]
\end{theorem}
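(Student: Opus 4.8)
The plan is to construct functors in both directions and show they are mutually quasi-inverse, following the approach of Borne and Biswas that underlies the cited reference. The key observation is that the root stack $X_{\sL,s,e}$ carries the universal root line bundle $\sN$ with a canonical map $\sN^{\otimes(-1)} \to \sO$ (dual to the section $t$ whose $e$-th power pulls back $s$), so that tensoring with powers $\sN^{\otimes(-i)}$ for $0 \le i \le e$ produces, out of any vector bundle $\sF$ on the root stack, a chain of maps whose pushforwards to $X$ will be the desired filtration.

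First I would define the functor $\Vect(X_{\sL,s,e}) \to \Par(\sL,s,e)$. Given $\sF$ on the root stack, set $\sE_i := q_*(\sF \otimes \sN^{\otimes(-i)})$ for $0 \le i \le e$. Because $\sN^{\otimes e} \cong q^*\sL$, the projection formula gives $\sE_e = q_*(\sF \otimes q^*\sL^{-1}) \cong \sE_0 \otimes \sL^{-1}$, and the maps $\sN^{\otimes(-i-1)} \hookrightarrow \sN^{\otimes(-i)}$ coming from $t$ push forward to the filtration maps $\sE_{i+1} \hookrightarrow \sE_i$, with the composite $\sE_e \hookrightarrow \sE_0$ being multiplication by $s$ as required. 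One has to check these pushforwards are locally free on $X$ and that the inclusions are indeed injective with the prescribed behaviour; this is local on $X$, so one reduces to the standard local model of the root stack, $[\Spec(A[y]/(y^e - a))/\mu_e]$, where everything can be computed explicitly by decomposing $\mu_e$-equivariant modules into weight spaces.

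In the other direction, given a parabolic bundle $(\sE_\bullet)$, I would reconstruct $\sF$ on $X_{\sL,s,e}$ by gluing the local weight-space descriptions, or equivalently by the formula $\sF = \bigl(\bigoplus_{i=0}^{e-1} q^*\sE_i \otimes \sN^{\otimes i}\bigr) / (\text{relations from the filtration maps})$ — concretely, one takes the subsheaf of $q^*\sE_0 \otimes_{\sO_X} \sO_{X_{\sL,s,e}}[\sN^{\pm 1}]$ generated in each weight by the appropriate $\sE_i$. Again the construction is local on $X$ and reduces to the $\mu_e$-equivariant module computation. The two functors are then checked to be mutually inverse on local models, and since both constructions are compatible with restriction along open immersions in $X$ (hence glue), the equivalence follows globally.

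The main obstacle, such as it is, will be bookkeeping rather than conceptual: carefully matching the indexing conventions (whether one tensors by $\sN^{\otimes i}$ or $\sN^{\otimes(-i)}$, and how the section $s$ enters the identification $\sE_e \cong \sE_0 \otimes \sL^{-1}$) so that the composite $\sE_e \hookrightarrow \sE_0$ genuinely arises from $s$, and verifying that the local equivalences are compatible with the transition functions of $\sL$ so that they patch. Since $X$ is noetherian, there are no finiteness subtleties. In fact, rather than redo all of this, I would simply invoke \cite{borne}, where this equivalence (in the generality of an effective Cartier divisor, or a line bundle with section) is established in detail, and limit the exposition here to fixing the normalisations and sign conventions that will be used in the sequel.
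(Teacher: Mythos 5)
Your proposal is correct and follows essentially the same route as the paper: the paper's proof simply cites Borne's theorem (the reference you also invoke) and records the same key formula $\sE_i = q_*(\sF\otimes\sN^{\otimes(-i)})$ for the functor from bundles on the root stack to parabolic bundles. Your additional sketch of the inverse functor via the local $\mu_e$-equivariant model is consistent with, and a harmless elaboration of, what the paper leaves to the reference.
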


\begin{proof}
	See \cite[3.13]{borne}. Let us remark here that the parabolic bundle associated
	to a vector bundle $\sE$ on $\sX$ is obtained by 
	$q_*(\sE\otimes \sN^{-i})=\sE_i$.
\end{proof}

\begin{remark}\label{r:local}
	The root stack admits a nice local description which explains the above
	correspondence quickly. Suppose that $X=\Spec(R)$ is affine and $\sL$ is trivial.
	Then $s\in R$. The scheme
	\[ R[t]/<t^e-s>\]
	has an action of the group scheme $\mu_e$. The quotient stack is the root stack.
	The correspondence comes from the fact that $\mu_e$-equivariant objects are
	just graded objects. For details see \cite{cadman}.
\end{remark}

The root stack construction is easily seen to be functorial in the following
sense, given $f:Y\rightarrow X$ then the root stack $Y_{f^*\sL,e}$ is the
2-pullback of $X_{\sL,e}$ along the morphism $f:Y\rightarrow X$. In other words,
there is a 2-cartesian diagram
\begin{center}
\begin{tikzcd}
Y_{f^*s\sL,e} \arrow[r,"g"] \arrow[d,"q'"] & X_{\sL,e} \arrow[d,"q"] \\
Y \arrow[r,"f"] & X\\
\end{tikzcd}
\end{center}

\begin{proposition}\label{p:flat}
	In the above situation, suppose that $Y\rightarrow X$ is flat. Suppose that
	$\sF$ is a vector bundle on $X_{\sL,e}$ with corresponding parabolic
	vector bundle $\sF_0\supseteq \sF_1\ldots \supseteq \sF_e$. Then 
	the parabolic vector bundle corresponding to $g^*\sF$ is
	$f^*\sF_0\supseteq f^*\sF_1\supseteq \ldots \supseteq f^*\sF_e$.
\end{proposition}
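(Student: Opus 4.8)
The plan is to run everything through the explicit description of the parabolic-orbifold correspondence recorded in the proof of Theorem~\ref{t:correspondence}: writing $\sN$ for the universal root line bundle on $X_{\sL,e}$, the parabolic bundle attached to a vector bundle $\sF$ on $X_{\sL,e}$ has $i$-th term $\sF_i=q_*(\sF\otimes\sN^{-i})$. So it suffices to establish two compatibilities of this recipe with the base change $g$, and then note that the filtration data transports formally.

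First I would check that the universal root line bundle $\sN'$ on $Y_{f^*\sL,e}$ is canonically $g^*\sN$. This is immediate from the modular meaning of the $2$-cartesian square: an $S$-point of $Y_{f^*\sL,e}$ is an $S$-point of $Y$ together with a line bundle with section $(\sM,t)$ on $S$ and an isomorphism $\sM^{\otimes e}\xrightarrow{\sim}f^*\sL$ carrying $t^{e}$ to $f^*s$ (pullbacks along $S\to Y$) — which is exactly the data of an $S$-point of $X_{\sL,e}$ lying over the given point of $Y$. Under this identification the tautological line bundle $\sM$ is on one hand $\sN'$ and on the other the pullback of $\sN$ along $g$, compatibly with the isomorphisms to the $e$-th roots; hence $\sN'\cong g^*\sN$. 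Second I would invoke flat base change along the square: since $f$ is flat and $q$ is quasi-compact and quasi-separated (root stacks of noetherian schemes are noetherian, and $q$ is proper), the natural map $f^*q_*\mathcal{G}\to q'_*g^*\mathcal{G}$ is an isomorphism for every quasi-coherent sheaf $\mathcal{G}$ on $X_{\sL,e}$. Taking $\mathcal{G}=\sF\otimes\sN^{-i}$ and using $g^*(\sF\otimes\sN^{-i})\cong g^*\sF\otimes\sN'^{-i}$ from the previous step gives
\[
f^*\sF_i\;=\;f^*q_*\bigl(\sF\otimes\sN^{-i}\bigr)\;\cong\;q'_*\bigl(g^*\sF\otimes\sN'^{-i}\bigr),
\]
which is precisely the $i$-th term of the parabolic bundle of $g^*\sF$. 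The inclusions $f^*\sF_{i+1}\hookrightarrow f^*\sF_i$ stay injective because $f$ is flat, and the distinguished isomorphism $\sF\otimes\sL^{-1}\cong\sF_e$ (together with its compatibility with $s$) pulls back to the corresponding datum $f^*\sF\otimes f^*\sL^{-1}\cong f^*\sF_e$; so the entire parabolic structure matches under these canonical isomorphisms. If one prefers to avoid quoting flat base change for stacks, the same argument runs locally via Remark~\ref{r:local}: over an affine open of $X$ trivialising $\sL$ the root stack is $[\Spec R[t]/\langle t^{e}-s\rangle\,/\,\mu_e]$, vector bundles on it are $\ZZ/e$-graded projective modules, $q_*$ extracts graded pieces, and $g^*$ is base change along the flat ring map $R\to R'$, which visibly commutes with extracting a graded piece; these local isomorphisms are canonical and glue.

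The one genuine point requiring care is the base-change step — one must be sure that the push-forward $q_*$ defining the parabolic filtration commutes with the pullback $f^*$, and this is exactly where flatness of $f$ enters (it fails for general $f$). Everything else, including checking that the comparison maps are the canonical ones and hence respect the filtration inclusions and the isomorphism $\sF\otimes\sL^{-1}\cong\sF_e$, is routine bookkeeping with the two modular descriptions; note also that $f^*$ of a vector bundle is a vector bundle, so the output is again an honest parabolic bundle.
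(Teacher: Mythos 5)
Your proposal is correct and follows essentially the same route as the paper: it reduces the statement to the formula $\sF_i=q_*(\sF\otimes\sN^{-i})$, applies flat base change along the 2-cartesian square, and makes this precise via the local $\mu_e$-quotient description of Remark~\ref{r:local}, where $q_*$ amounts to extracting graded pieces (equivalently, $\mu_e$-invariants after twisting). Your additional verifications — that the root line bundle pulls back to the root line bundle and that the filtration maps and the isomorphism $\sF\otimes\sL^{-1}\cong\sF_e$ are respected — are just more explicit bookkeeping of what the paper leaves implicit.
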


\begin{proof}
	Recall that vector bundle $\sF_i = q_*(\sF\otimes \sN^{-i})$ so that this
	result amounts to essentially flat base change. To make this precise, the
	root stack is locally on $X$ a $\mu_e$ quotient stack, \cite[3.4]{borne}.
	The result now follows from flat base change and the fact that the
	push forward $q_*$ amounts to taking $\mu_e$-invariants of an equivariant sheaf.
\end{proof}

Now assume $X$ is a projective variety over a ground field $k$. We fix Cartier divisors
$D_1,\ D_2,\ldots, D_l$ and positive integers $e_1, e_2,\ldots, e_l$ coprime
to ${\rm char} (k)$.
We let $$\sX = X_{(D_1,e_1),\ldots, (D_l,e_l)}$$
be the corresponding root stack construction.
Corresponding to this there are root line bundles (see \cite{cadman}) written $\sN_i$  
on the root stack $\sX$. We write $q:\sX\rightarrow \Spec(k)$ for
the structure map.

\begin{lemma} \label{l:exact}.
	The morphism $q_*$ is exact where $q$ is 
	the coarse moduli map $q:\sX\rightarrow X$. 
\end{lemma}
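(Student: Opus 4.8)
The claim is local on $X$, so the plan is to reduce to the situation of Remark \ref{r:local}. First I would cover $X$ by affine opens $U = \Spec(R)$ over which each line bundle $\sO_X(D_i)$ is trivial, so that each $D_i$ is cut out by a single element $s_i \in R$; by Proposition \ref{p:flat} (or rather the local statement underlying Theorem \ref{t:correspondence}) the formation of $q_*$ commutes with this restriction, since the $U_i = \Spec(R[t_i]/(t_i^{e_i} - s_i))$ assemble to the root stack and the open immersion $U \hookrightarrow X$ is flat. Hence it suffices to check exactness of $q_*$ after restricting to each such $U$, where the root stack becomes the quotient stack $[\Spec(A)/\mu_{e_1}\times\cdots\times\mu_{e_l}]$ with $A = R[t_1,\ldots,t_l]/(t_i^{e_i}-s_i)$.

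Now over such a chart, $q_*$ is identified with the functor sending a $G$-equivariant $A$-module (where $G = \prod \mu_{e_i}$) to its submodule of invariants, equivalently its degree-zero part under the induced $\ZZ/e_1 \times \cdots \times \ZZ/e_l$-grading. The key point is that taking the degree-zero graded piece of a graded module is an exact functor: a short exact sequence of graded $A$-modules is in particular a short exact sequence of the underlying $R$-modules that respects the grading, and passing to a fixed graded component is just applying the (exact) functor "intersect with the degree-$\mathbf{0}$ part," since the grading direct-sum decomposition is preserved by graded maps. Equivalently, because $|G|$ is invertible in $k$ (as each $e_i$ is coprime to $\operatorname{char}(k)$), the functor of $G$-invariants is a direct summand of the identity functor via the averaging/Reynolds operator, hence exact. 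I would phrase the argument using the Reynolds operator, citing \cite[3.4]{borne} for the local quotient-stack description, exactly as in the proof of Proposition \ref{p:flat}.

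The one genuine subtlety to address is the compatibility of the local computations: I must make sure that the identification of $q_*\sF|_U$ with $(\sF|_{\sX_U})^G$ is independent of the trivialisations of the $\sO_X(D_i)$ and glues over intersections of charts. This is exactly flat base change for $q_*$ along the flat maps $U \hookrightarrow X$, which is Proposition \ref{p:flat} (its proof already records that $q_*$ is $\mu_e$-invariants and commutes with flat base change); so exactness, being checkable on a cover, follows. Thus I do not expect a serious obstacle here — the essential content is the triviality that $G$-invariants (with $|G|$ invertible) is exact, and the only care needed is bookkeeping across the finitely many root data $(D_i, e_i)$ and the affine cover.
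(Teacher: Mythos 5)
Your argument is correct and is essentially the paper's own proof: the paper simply cites the local description of Remark \ref{r:local} (the root stack is locally $[\Spec(R[t]/\langle t^e-s\rangle)/\mu_e]$, so $q_*$ is taking invariants, i.e.\ the degree-zero graded piece) together with tameness, which is exactly the reduction and the exactness-of-invariants argument you spell out. The extra care you take about trivialisations and gluing via flat base change is harmless bookkeeping already implicit in the paper's appeal to the local description.
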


\begin{proof}
	This follows from the local description of the root stack see \ref{r:local}.
	Note that the stack is tame.
\end{proof}

\begin{corollary}
	The derived functor $\bR q_*$ preserves the amplitude
	of a bounded complex.
\end{corollary}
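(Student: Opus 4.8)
The plan is to read the corollary off directly from Lemma~\ref{l:exact}: once $q_*$ is known to be exact, its right derived functor is computed with no resolution at all. Concretely, since $q_*$ carries short exact sequences of (quasi-)coherent sheaves on $\sX$ to short exact sequences, it carries acyclic complexes to acyclic complexes and hence preserves quasi-isomorphisms; therefore for any bounded complex $\sF^\bullet$ on $\sX$ the natural map is an isomorphism $\bR q_*\sF^\bullet \cong q_*\sF^\bullet$ in the derived category, where on the right $q_*$ is applied term by term. First I would record this reduction.

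Next I would commute $q_*$ past the formation of cohomology. Exactness means $q_*$ commutes with kernels, images and cokernels, so $\mathcal{H}^i(q_*\sF^\bullet)\cong q_*\big(\mathcal{H}^i(\sF^\bullet)\big)$ for every $i$; combined with the previous step this gives $\mathcal{H}^i(\bR q_*\sF^\bullet)\cong q_*\big(\mathcal{H}^i(\sF^\bullet)\big)$. Equivalently one can cite the hypercohomology spectral sequence $E_2^{p,q}=R^p q_*\,\mathcal{H}^q(\sF^\bullet)\Rightarrow \mathcal{H}^{p+q}(\bR q_*\sF^\bullet)$ and note that it collapses onto the row $p=0$ because $R^{>0}q_*=0$ by exactness. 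In either form, if $\mathcal{H}^i(\sF^\bullet)=0$ outside a range $a\le i\le b$ then $\mathcal{H}^i(\bR q_*\sF^\bullet)=0$ outside that range as well, which is the assertion that $\bR q_*$ does not enlarge the amplitude.

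There is no serious obstacle here: the statement is purely formal given Lemma~\ref{l:exact}. The one point worth a sentence of care is that ``preserves the amplitude'' should be read as ``keeps the complex inside $D^{[a,b]}$'': the functor $q_*$ can annihilate a nonzero coherent sheaf --- for instance one supported at an orbifold point and transforming under a non-trivial character of the local inertia --- so the amplitude of $\bR q_*\sF^\bullet$ may be strictly smaller than that of $\sF^\bullet$, but it can never grow. The other, entirely standard, point is the justification that exactness permits one to bypass an injective (or $q_*$-acyclic) resolution when forming $\bR q_*$, which is precisely what legitimises the first step.
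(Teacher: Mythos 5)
Your argument is correct and is exactly the reasoning the paper leaves implicit: the corollary is stated as an immediate consequence of Lemma~\ref{l:exact}, since exactness of $q_*$ gives $\bR q_* \cong q_*$ applied termwise, so cohomological amplitude cannot grow. Your extra remark that the amplitude can shrink (e.g.\ on sheaves transforming under a nontrivial character at an orbifold point) is a sensible clarification of what ``preserves'' means here, but otherwise the proposal matches the paper's intent.
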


\begin{theorem}
	The stack of coherent sheaves on $\sX$, written $\Coh_\sX$ is algebraic.
\end{theorem}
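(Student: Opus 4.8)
The plan is to deduce algebraicity of $\Coh_{\sX}$ from the known algebraicity of the stack of coherent sheaves on the projective scheme $X$ (which is classical, cf. Laumon--Moret-Bailly), by exploiting the parabolic-orbifold correspondence of Theorem \ref{t:correspondence} in its coherent-sheaf form. First I would note that, just as vector bundles on $\sX$ correspond to parabolic vector bundles on $X$, coherent sheaves on $\sX$ correspond to \emph{filtered} coherent sheaves on $X$: concretely, via the functor $\sF \mapsto (q_*(\sF\otimes \sN^{-i}))_{i}$, possibly iterated over the several root data $(D_j,e_j)$. The exactness of $q_*$ established in Lemma \ref{l:exact} (the stack is tame) is what makes this work at the level of coherent sheaves and not merely locally free ones, and it is also what guarantees that the correspondence is compatible with base change, so that it identifies not just categories of sheaves but the associated moduli functors/stacks.

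The key steps, in order, would be: (1) Extend Theorem \ref{t:correspondence} to an equivalence $\Coh(\sX) \simeq \Par\text{-}\Coh(X)$ between coherent sheaves on $\sX$ and a suitable category of coherent sheaves on $X$ equipped with a finite filtration by coherent subsheaves (with the twisting/compatibility data coming from the $\sN_i$), functorially in flat base change $S \to \Spec k$ --- here Proposition \ref{p:flat} is the locally free prototype and the argument via $\mu_e$-quotient presentations carries over verbatim to coherent sheaves using Lemma \ref{l:exact}. (2) Observe that the stack $\Par\text{-}\Coh(X)$ is built from the stack $\Coh_X$ by iterated constructions that preserve algebraicity: taking the stack of sequences of coherent sheaves with morphisms between them (a finite fibre product of copies of $\Coh_X$ over $\Coh_X$ via Hom-stacks), then cutting out the closed/locally closed condition that the relevant maps be monomorphisms of sheaves flat over the base, and finally imposing the isomorphism condition $\sE \otimes \sL^{-1} \cong \sE_e$ lifting the section $s$, which is again representable. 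Each of these operations sends algebraic stacks (locally of finite type over $k$) to algebraic stacks. (3) Conclude that $\Coh_{\sX}$, being equivalent as a stack over $\mathsf{Sch}/k$ to $\Par\text{-}\Coh(X)$, is algebraic.

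An alternative, and perhaps cleaner, route is to quote directly a general theorem that the stack of coherent sheaves on a proper Deligne--Mumford stack (or a proper tame Artin stack) over a field is algebraic --- this is standard and appears in the literature on moduli of sheaves on stacks; $\sX$ is proper over $k$ since $q:\sX \to X$ is proper (finite, even, after a cover) and $X$ is projective, and $\sX$ is Deligne--Mumford because the $e_j$ are coprime to $\operatorname{char} k$. I would likely present the short argument via this citation and remark that the parabolic description gives an explicit atlas. The main obstacle is not conceptual but bookkeeping: one must be careful that ``coherent sheaf on $\sX$'' corresponds to a filtration by \emph{coherent} subsheaves of a coherent sheaf with the correct positivity/twisting normalisations at each divisor $D_j$ simultaneously, and that the monomorphism and lifting-the-section conditions defining $\Par\text{-}\Coh(X)$ inside the product of $\Coh_X$'s are genuinely (locally closed) algebraic substack conditions in families --- i.e.\ that one is imposing flatness of subsheaves and quotients over the base, not just fibrewise injectivity. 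Once that is set up, algebraicity is inherited from $\Coh_X$ formally.
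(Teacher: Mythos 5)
Your main route (steps (1)--(3)) has a genuine gap at step (1): the parabolic--orbifold correspondence does \emph{not} extend to all coherent sheaves in the form you state. For a coherent sheaf $\sF$ on $\sX$ with torsion supported on the stacky locus, the maps $q_*(\sF\otimes\sN^{-i-1})\to q_*(\sF\otimes\sN^{-i})$ need not be injective, so the data on $X$ is not a filtration by coherent subsheaves. Concretely, in the local model $[\Spec(R[t]/\langle t^e-s\rangle)/\mu_e]$ of \ref{r:local}, take the skyscraper module $R[t]/\langle t\rangle$ with the weight-one $\mu_e$-action: its invariants vanish while the invariants of its twist by $\sN^{-1}$ do not, so the corresponding diagram has a nonzero sheaf mapping to zero. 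Consequently, the stack you build in step (2) by imposing that the structure maps be monomorphisms (flat over the base) is a proper substack of what you need, and it is \emph{not} equivalent to $\Coh_\sX$; the conclusion in step (3) fails. The correct coherent-sheaf analogue of \ref{t:correspondence} is the Borne--Vistoli notion of parabolic sheaf (a diagram indexed by $\tfrac{1}{e}\bbZ$ with pseudo-periodicity $\sE_{i+e}\cong\sE_i\otimes\sL^{-1}$ and composite given by the section, with no injectivity requirement), as in the reference \cite{vistoli}; with that category your fibre-product-of-$\Coh_X$ argument could be repaired, but as written the key equivalence is false.

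Your fallback route (quote a general algebraicity theorem for coherent sheaves on a proper tame Deligne--Mumford stack) is legitimate but is a citation rather than an argument, and it is not what the paper does. The paper instead adapts the Laumon--Moret-Bailly proof directly on $\sX$: using the surjections $\sO_X(-n_{ij})\otimes H_{ij}\twoheadrightarrow q_*(\sF\otimes\sN_i^{-j})$ and adjunction, every coherent sheaf on $\sX$ is exhibited as a quotient of a finite direct sum of sheaves $\sN_i^{j}\otimes\sO_\sX(-n_{ij})\otimes H_{ij}$ (surjectivity being checked on the local model \ref{r:local}), Serre vanishing controls higher cohomology, and the atlas is produced from open subsets of Quot schemes for Deligne--Mumford stacks \cite{starr}. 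In other words, the family $\{\sN_i^{j}\}$ plays the role of a generating sheaf, and no coherent-sheaf version of the parabolic correspondence is needed.
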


\begin{proof}
	The standard proof in \cite{laumon} can be made to work when combined with the
	following observations. Let $\sF$ be a coherent sheaf on $\sX$. We can find
	integers $n_{ij}$ and vector spaces $H_{ij}$ so that we have epimorphisms
	\[ \sO_X(-n_{ij})\otimes H_{ij} \twoheadrightarrow q_{*}(\sF\otimes\sN_i^{-j}) \]
	where $0\le j\le e_{i}$. By adjointness we obtain a morphism 
	\[ \sN_i^{j}\otimes\sO_\sX(-n_{ij})\otimes  H_{ij}  \rightarrow \sF. \]
	Taking a direct sum of these maps we obtain an epimorphism, this follows
	form the local description, see \ref{r:local} or \cite{vistoli}. Further we can arrange for
	the appropriate higher cohomology to vanish using Serre vanishing. The
	needed presentation comes from considering open subsets of Quot schemes.
	
	For the existence of quot scheme in the current setting, see \cite{starr}.
\end{proof}

We will be interested in the case where $X$ is a smooth projective curve over a field
$k$. Our Cartier divisor will be a closed point $p\in X$ or a finite collection of such points.
 The corresponding
notion of parabolic vector bundle amounts to vector bundle $\sE$ on $X$ and a $k(p)$-point
of a flag variety $\Flag(\sE|_{k(p)},n_1,n_2,\ldots, n_{e-1})$
parameterising subspaces
\[
V_0=\sE_{k(p)} \supseteq V_1\supseteq \ldots \supseteq V_{e-1} \supseteq V_e= \{0\}\]
with 
$\dim_{k(p)} V_i=n_i$. If $\sE$, thought of as a vector bundle on the root stack via
the previous theorem, is allowed to vary in a flat family the numbers $n_i$ along with
$r={\rm rk}(\sE)$ and $d=\deg(\sE)$ do not change.
We will refer to the collection 
\[ (r,d, (p, n_0, n_1,n_2,\ldots, n_{e-1}, n_e))\]
as a parabolic datum. Notice that $n_0=\rk(\sE)$ and $n_e=0$.

We will have occasion to consider many such points
$p_1,\ldots, p_l$ with ramification indices $e_i$ and integers
$$
(n_{0i}=\rk(\sE),n_{i0}\ge n_{1i}\ge n_{2i}\ge \ldots \ge n_{e_ii}=0)=\bn_i.
$$
A moduli stack of parabolic bundles, denoted
\[
\Bun^{r,d}_{\bn,X}
\]
is obtained, here $\bn=(\bn_1,\ldots, \bn_l)$. As the forgetful morphism
\[
\Bun^{r,d}_{\bn,X}\rightarrow \Bun^{r,d}
\]
is represented by Weil restrictions of flag varieties, we obatin an 
alternate proof that the stack is algebraic.

\begin{remark}\label{r:shom}
There is an internal hom object in the category of parabolic vector
bundles. Indeed there is one in the category of vector bundles on
the root stack, hence the assertion follows from the correspondence \ref{t:correspondence}. We would like to describe the parabolic datum
associated to the endomorphism bundle as this will be used later. Let
$\sF$ be a parabolic bundle with datum $(n_0,n_1,\ldots n_{e-1})$ at the
Cartier divisor $D$. Then $\sHom(\sF,\sF)$ has datum
$(m_0,m_1,\ldots ,m_{e-1})$ where
\[
m_d = \sum_{\substack{d\le \lambda < e \\ \lambda=i-j\ {\rm mod}\ e}}(n_i - n_{i+1})(n_j-n_{j+1}).
\]
This can be seen by looking at the $\mu_e$-action on a module of the form
$M\otimes M^\vee$ in the local description, \ref{r:local} and observing that $n_i-n_{i+1}$ is the dimension
of the space where the action has weight $\zeta^i$ for some primitive $e$th root of unity $\zeta$.
\end{remark}


\section{Riemann-Roch for the root stack}\label{s:rr}

\subsection{Riemann-Roch for Deligne-Mumford quotient stacks}  

In this section we recall a version of the Riemann-Roch theorem for Deligne-Mumford stacks, see \ref{t:rreg} below. There are other versions in \cite{toen} and \cite{toen2}. We would like our version of the theorem to
hold in positive characteristic provided our stack is tame. As the theorem in 
\ref{t:rreg} is stated in characteristic zero, some discussion regarding modifications of arguments are needed. We do not need the full strength of this result. We only need to consider the case where our
stack is a quotient stack by a torus. Our discussion centres around a rank one torus, higher rank modifications being left to the reader.

\begin{notation} \label{notation}
	Let $k$ be an algebraically closed field (of arbitrary characteristic) and $Y$ be a smooth $k$-scheme on which $\mathrm{T} = \Gm \rightarrow \Spec(k)$ acts properly, i.e., when the action map $\mathrm{T} \times \Y \rightarrow \Y \times \Y$ is proper. In particular, this implies that the stabilizers are finite.  Assume that when the characteristic of $k$ is non-zero it is coprime to the order of all the stabilizers. 
	Let $\mathrm{N}:= \Hom_{\rm groups}(\mathrm{T},\Gm)$ be the group of characters of $\mathrm{T}$. It is an infinite cyclic group, 
	with generator that we call $\mathrm{t}$, so $\mathrm{N}=<\mathrm{t}>$.
	We can recover $\mathrm{T}$ as $\mathrm{D}(\mathrm{N})$, the diagonalizable group associated to $\mathrm{N}$.  Let $\mathrm{R} = \mathbb{Z}[\mathrm{N}] \otimes_{\mathbb{Z}} \bar{\mathbb{Q}} =  \mathrm{R}(\mathrm{T})\otimes_{\mathbb{Z}} \mathbb{\bar{Q}} = \mathbb{\bar{Q}}[\mathrm{t},\mathrm{t}^{-1}]$ be the ring of representations of $\mathrm{T}$ with coefficients in $\bar{\mathbb{Q}}$.
	We recall from \cite{thom} a construction of Segal: 
	
	For every prime ideal $\mathrm{P}$ of $\mathrm{R}$, we may associate a subgroup $\mathrm{T}_\mathrm{P}$ of $\mathrm{T}$ called the \emph{support} of $\mathrm{P}$. These groups are given by $\mathrm{T}_\mathrm{P} = \mathrm{D}(\mathrm{N}/\mathrm{K}_{\mathrm{P}})$, where 
	$$\mathrm{K}_{\mathrm{P}} := \{ \mathrm{n} \in \mathrm{N} \: / \: 1 - [\mathrm{n}] \in \mathrm{P} \}.$$
\end{notation}

With the notation above we have
\begin{lemma} \label{supp}
	For $\T = \Gm$ the supports are classified as follows: 
	\begin{center}
	$\T_\mathrm{P}=$
	$\begin{cases}
		\mu_l & \text{if} \;  \mathrm{P} = (\mathrm{t}-\zeta_{l}), where \; \zeta_{l} \text{ is a primitive l-th root of unity,} \\
		\Gm & otherwise.
	\end{cases}$
           \end{center}
\end{lemma}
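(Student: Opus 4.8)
The plan is to unwind the definitions of $\mathrm{R} = \bar{\mathbb{Q}}[\mathrm{t},\mathrm{t}^{-1}]$, the subgroup $\mathrm{K}_\mathrm{P}$, and the support $\mathrm{T}_\mathrm{P} = \mathrm{D}(\mathrm{N}/\mathrm{K}_\mathrm{P})$, and simply classify the prime ideals of the Laurent polynomial ring over an algebraically closed field. First I would recall that since $\bar{\mathbb{Q}}$ is algebraically closed, every maximal ideal of $\bar{\mathbb{Q}}[\mathrm{t},\mathrm{t}^{-1}]$ is of the form $(\mathrm{t}-a)$ for some $a \in \bar{\mathbb{Q}}^\times$, and the only nonzero non-maximal prime is $(0)$ (the ring is a one-dimensional Dedekind domain, being a localization of $\bar{\mathbb{Q}}[\mathrm{t}]$). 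So there are two cases: $\mathrm{P} = (0)$ and $\mathrm{P} = (\mathrm{t}-a)$ with $a \in \bar{\mathbb{Q}}^\times$.

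Next I would compute $\mathrm{K}_\mathrm{P}$ in each case. Writing a general element of $\mathrm{N} = \langle \mathrm{t}\rangle$ as $\mathrm{t}^n$ (using $[\mathrm{n}]$ for the corresponding element $\mathrm{t}^n$ of $\mathrm{R}$), we need $1 - \mathrm{t}^n \in \mathrm{P}$. For $\mathrm{P} = (0)$ this forces $\mathrm{t}^n = 1$, i.e. $n = 0$, so $\mathrm{K}_\mathrm{P} = 0$, hence $\mathrm{N}/\mathrm{K}_\mathrm{P} = \mathrm{N} \cong \mathbb{Z}$ and $\mathrm{T}_\mathrm{P} = \mathrm{D}(\mathbb{Z}) = \Gm$. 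For $\mathrm{P} = (\mathrm{t}-a)$, evaluation at $\mathrm{t} = a$ gives a ring isomorphism $\mathrm{R}/\mathrm{P} \cong \bar{\mathbb{Q}}$, and $1 - \mathrm{t}^n \in \mathrm{P}$ if and only if $a^n = 1$. Thus $\mathrm{K}_\mathrm{P} = \{\mathrm{t}^n : a^n = 1\}$, which is trivial when $a$ is not a root of unity (giving $\mathrm{T}_\mathrm{P} = \Gm$ again) and is $l\mathbb{Z}$ when $a = \zeta_l$ is a primitive $l$-th root of unity; in that case $\mathrm{N}/\mathrm{K}_\mathrm{P} \cong \mathbb{Z}/l\mathbb{Z}$ and $\mathrm{T}_\mathrm{P} = \mathrm{D}(\mathbb{Z}/l\mathbb{Z}) = \mu_l$. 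Collecting these cases yields exactly the stated dichotomy.

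There is really no serious obstacle here; the only point requiring a little care is being explicit about why the prime spectrum of $\bar{\mathbb{Q}}[\mathrm{t},\mathrm{t}^{-1}]$ is as claimed — namely invoking that it is a PID (localization of the PID $\bar{\mathbb{Q}}[\mathrm{t}]$) whose maximal ideals correspond to irreducible Laurent polynomials, which over an algebraically closed field are the linear ones $\mathrm{t} - a$ with $a \neq 0$ — and making sure the bookkeeping between the multiplicative group $\mathrm{N} = \langle \mathrm{t} \rangle$ and its additive avatar $\mathbb{Z}$ is consistent when passing to the quotient $\mathrm{N}/\mathrm{K}_\mathrm{P}$ and applying the Cartier duality functor $\mathrm{D}(-)$.
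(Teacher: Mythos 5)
Your argument is correct and is essentially the same as the paper's: unwind the definition of $\mathrm{K}_\mathrm{P}$, note that $1-\mathrm{t}^n \in (\mathrm{t}-\zeta_l)$ exactly when $l \mid n$ (giving $\mathrm{D}(\mathrm{N}/\langle\mathrm{t}^l\rangle)=\mu_l$), and that for all other primes $\mathrm{K}_\mathrm{P}$ is trivial so $\mathrm{T}_\mathrm{P}=\Gm$. Your extra step of explicitly classifying the primes of $\bar{\mathbb{Q}}[\mathrm{t},\mathrm{t}^{-1}]$ is a harmless elaboration of what the paper leaves implicit.
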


\begin{proof}
	
	When $\mathrm{P} = (\mathrm{t} - \zeta_{l})$ we have $\mathrm{K}_\mathrm{P} = \{ t^{i} \in \mathrm{N} = <\mathrm{t}> \mid / \: 1-\mathrm{t}^{i} \in (\mathrm{t} - \zeta_{l}) \} = <\mathrm{t}^{l}>$. Therefore $\T_\mathrm{P} = \mathrm{D}(\mathrm{N}/\mathrm{K}_{\mathrm{P}}) = \mathrm{D}(<\mathrm{t}>/<\mathrm{t}^{l}>) = \mu_{l}$. When $\mathrm{P}$ is a prime ideal not of the form $(\mathrm{t}- \zeta_l)$ then there is no positive integer $i$ such that $1-\mathrm{t}^{i} \in \mathrm{P}$. Hence $\mathrm{K}_\mathrm{P} = \{1\}$ and $\mathrm{D}(\mathrm{N}/\mathrm{K}_\mathrm{P}) = \mathrm{D}(\mathrm{N}) = \Gm$. 
\end{proof}

The $\T$-equivariant $\G$-theory of a scheme $\Y$, $\G_{0}^{\T}(\Y)$ is a module over the ring of representations $\mathrm{R}(\T)$. Moreover, $\G_{0}^{\T}(\Y) \otimes \bar{\mathbb{Q}}$ is supported at finitely many maximal ideals of $\mathrm{R} = \mathrm{R}(\T)\otimes \bar{\mathbb{Q}}$.

\begin{theorem} [Theorem 5.2 \cite{eg-Duke}]\label{thomloc}
	Let $\Y$ be a $k$-scheme with a $\T$-action as described above. Let $\Y^\mathrm{P}$ be the closed subscheme of fixed points of $\Y$ by $\T_\mathrm{P}$. We have a decomposition of $\T$-equivariant $\G$-theory with coefficeints in $\bar{\mathbb{Q}}$ as follows	
	$$ \G_{0}^{\T}(\Y)\otimes \bar{\mathbb{Q}}  = \bigoplus_{\mathrm{P}} (\G_{0}^{\T}(\Y^{\mathrm{P}})\otimes \bar{\mathbb{Q}})_{(\mathrm{P})}, $$	
	where $\mathrm{P}$ ranges over a finite number of ideals of the form $\mathrm{P} = (\mathrm{t} - \zeta_{l})$ for some primitive $l$-th roots of unity $\zeta_l$, which includes $\zeta_1 = 1$. Here $(\G_{0}^{\T}(\Y^{\mathrm{P}})\otimes \bar{\mathbb{Q}})_{(\mathrm{P})}$ denotes the localization at the prime ideal $\mathrm{P} \subset \mathrm{R}$. In particular $\G_{0}^{\T}(\Y)\otimes \bar{\mathbb{Q}}$ is supported at only finitely many closed points of $\Spec(\mathrm{R})$.
\end{theorem}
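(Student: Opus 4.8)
The plan is to establish this by the standard Thomason-style concentration (localization) argument, reducing everything to elementary commutative algebra over the one-dimensional ring $\mathrm{R}=\bar{\mathbb{Q}}[\tee,\tee^{-1}]$ and checking that tameness makes each step insensitive to the characteristic, so that the characteristic-zero proof of \cite[5.2]{eg-Duke} applies. Throughout write $M:=\G_0^\T(\Y)\otimes\bar{\mathbb{Q}}$, an $\mathrm{R}$-module.

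First I would pin down the support of $M$. Since $\T$ is abelian, $\Y$ has a dense $\T$-stable open subset with constant stabilizer (a finite, hence by tameness linearly reductive, subgroup of $\T$), and iterating on complements filters $\Y$ into finitely many locally closed $\T$-stable pieces $\Y_\alpha$, each with a fixed stabilizer $\mu_{m_\alpha}\subseteq\T$ (fixed, not just up to conjugacy, because $\Gm$ has a unique subgroup of each order). On such a piece the quotient stack $[\Y_\alpha/\T]$ is a gerbe, banded by the fixed finite diagonalizable group $\mu_{m_\alpha}$, over an algebraic space carrying no residual torus action; hence $\G_0^\T(\Y_\alpha)\otimes\bar{\mathbb{Q}}$ splits into $\mu_{m_\alpha}$-isotypic pieces, and on each of them the character $\tee^{m_\alpha}$ --- which is trivial on $\mu_{m_\alpha}$, hence descends to a line bundle on the coarse space --- acts unipotently, because the first Chern class of a line bundle is nilpotent on the $\G$-theory of a finite-dimensional noetherian scheme. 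Therefore $\G_\bullet^\T(\Y_\alpha)\otimes\bar{\mathbb{Q}}$ is supported only at the maximal ideals $(\tee-\zeta)$ with $\zeta^{m_\alpha}=1$, and feeding the strata through the equivariant localization exact sequences shows that $\operatorname{Supp}_{\mathrm{R}}(M)$ is a finite set of maximal ideals, each of the form $(\tee-\zeta_l)$, i.e.\ each a $\mathrm{P}$ with $\T_\mathrm{P}=\mu_l$ in the sense of Lemma \ref{supp}.

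Next, for a fixed such $\mathrm{P}=(\tee-\zeta_l)$ I would prove the concentration isomorphism: pushforward along the closed immersion $\Y^\mathrm{P}\hookrightarrow\Y$ induces $(\G_0^\T(\Y^\mathrm{P})\otimes\bar{\mathbb{Q}})_{(\mathrm{P})}\xrightarrow{\ \sim\ }M_{(\mathrm{P})}$ --- here $\Y^\mathrm{P}$ is again a scheme of the type considered, smooth when $\Y$ is, since $\mu_l$ is linearly reductive. By the localization long exact sequence it suffices that $(\G_\bullet^\T(U)\otimes\bar{\mathbb{Q}})_{(\mathrm{P})}=0$ for $U=\Y\setminus\Y^\mathrm{P}$. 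As $U$ has no point fixed by $\T_\mathrm{P}=\mu_l$, every stratum of $U$ in the filtration above has stabilizer a $\mu_m$ with $l\nmid m$, so by the previous paragraph its contribution is supported at ideals $(\tee-\zeta)$ with $\zeta^m=1$, none of which is $\mathrm{P}$ because $1-\zeta_l^m\ne 0$; localizing the filtration at $\mathrm{P}$ gives the vanishing. Finally I would assemble: an $\mathrm{R}$-module $M$ whose support is a finite set $\mathrm{P}_1,\dots,\mathrm{P}_s$ of maximal ideals satisfies $M\cong\bigoplus_i M_{(\mathrm{P}_i)}$, the natural map being an isomorphism because it is so after localizing at every maximal ideal of $\mathrm{R}$ (at $\mathrm{P}_i$ both sides are $M_{(\mathrm{P}_i)}$, elsewhere both vanish); substituting the concentration isomorphisms $M_{(\mathrm{P}_i)}\cong(\G_0^\T(\Y^{\mathrm{P}_i})\otimes\bar{\mathbb{Q}})_{(\mathrm{P}_i)}$ yields the stated decomposition, with the support assertion immediate.

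The hard part is the vanishing $(\G_\bullet^\T(U)\otimes\bar{\mathbb{Q}})_{(\mathrm{P})}=0$, and inside it the two technical inputs used repeatedly: the orbit-type stratification of a torus action, so that each stratum is a gerbe over a space banded by a fixed finite diagonalizable group, and the explicit $\mathrm{R}$-module structure of $\G_\bullet^\T$ of one such stratum, where nilpotence of $c_1$ is exactly what forces the support to sit at roots of unity. Everything else is formal commutative algebra. The positive-characteristic case needs no new idea: tameness forces all stabilizers --- in particular each $\T_\mathrm{P}=\mu_l$ --- to be linearly reductive with the same representation theory as in characteristic zero, makes the relevant fixed loci smooth and $\mu_l$-actions diagonalizable, and leaves the localization sequences, the noetherian induction, and the commutative algebra over $\mathrm{R}$ untouched, so the argument of \cite[5.2]{eg-Duke} transports verbatim.
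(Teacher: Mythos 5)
Your proposal is correct in outline and reaches the statement by the same skeleton as the paper — finite support at ideals $(\tee-\zeta_l)$, concentration onto the fixed locus after localizing at each such $\mathrm{P}$, and a purely commutative-algebra assembly over $\mathrm{R}=\bar{\mathbb{Q}}[\tee,\tee^{-1}]$ — but it differs in what it takes as input. The paper's proof is a short citation-assembly: the annihilation $\mathrm{J}\,\G_0^{\T}(\Y)=0$ with $\mathrm{J}=(\tee^d-1)$ is quoted from the proof of \cite[Proposition 5.1]{eg-Duke}, the Chinese remainder theorem then gives the direct-sum splitting into localizations on the nose, and the concentration isomorphism $(i_\mathrm{P})_*$ is quoted from Thomason's localization theorem \cite[Theorem 2.1]{thom}. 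You instead re-derive both inputs from first principles: the orbit-type stratification into strata with constant stabilizer $\mu_{m_\alpha}$, the isotypic decomposition of the $\G$-theory of the resulting tame $\mu_{m_\alpha}$-gerbe, and nilpotence of the $c_1$-type operator on the coarse space to show $(\tee^{m_\alpha}-1)$ acts nilpotently, hence finite support at roots of unity; and then the localization long exact sequence plus vanishing of $(\G_\bullet^{\T}(U)\otimes\bar{\mathbb{Q}})_{(\mathrm{P})}$ on the complement of $\Y^\mathrm{P}$ to get concentration. This is essentially a reconstruction of the proofs of the cited results, which buys self-containedness and makes visible exactly where tameness enters (linear reductivity of the $\mu_m$'s, the eigensheaf decomposition), whereas the paper's route buys brevity and only has to observe that the cited arguments are characteristic-independent. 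Two points in your version deserve care if written out: the vanishing on $U$ must be run through \emph{higher} equivariant $\G$-theory (you write $\G_\bullet$, correctly), so the nilpotence of $[L]-1$ is needed on higher $\G$-groups of a noetherian, finite-dimensional algebraic space, not just on $\G_0$ of a scheme; and your stratification argument yields only nilpotence of $\tee^{m}-1$ rather than the exact annihilation used in \cite{eg-Duke}, which is still enough for the support statement and the direct-sum decomposition, but the assembly should be phrased for modules with finite closed support rather than via $\mathrm{R}/\mathrm{J}$ as in the paper.
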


\begin{proof}
	The key observation is that there exists an ideal $\mathrm{J} = (\mathrm{t}^{d} - 1) \subset \mathrm{R}$ such that $\mathrm{J}\G_{0}^{\T}(\Y) = 0$, see the proof of \cite[Proposition 5.1]{eg-Duke}. Further, by the Chinese remainder theorem $\mathrm{R}/\mathrm{J} = \bigoplus_{\zeta} \mathrm{R}/(\mathrm{t}-\zeta)$ where $\zeta$ ranges over the $d$-th roots of unity. Theorefore, $ \G_{0}^{\T}(\Y)\otimes \bar{\mathbb{Q}} =  (\G_{0}^{\T}(\Y)\otimes \bar{\mathbb{Q}}) \otimes_{\mathrm{R}} \mathrm{R}/\mathrm{J} = \bigoplus_{\zeta} (\G_{0}^{\T}(\Y) \otimes \bar{\mathbb{Q}})/{(\mathrm{t}-\zeta)} = \bigoplus_{\zeta} (\G_{0}^{\T}(\Y) \otimes \bar{\mathbb{Q}})_{(\mathrm{t}-\zeta)}$. By Thomason's localization theorem  \cite[Theorem 2.1]{thom}, for a prime ideal $\mathrm{P} \subset \mathrm{R}$ the morphism of $\mathrm{R}_{(\mathrm{P})}$-modules induced by the equivariant embedding $i_\mathrm{P} : \Y^{\mathrm{P}} \hookrightarrow \Y$
	$$ (i_\mathrm{P})_{*} : (\G_{0}^{\T}(\Y^{\mathrm{P}})\otimes \bar{\mathbb{Q}})_{(\mathrm{P})} \rightarrow (\G_{0}^{\T}(\Y)\otimes \bar{\mathbb{Q}})_{(\mathrm{P})} $$	
	is an isomorphism. Hence the theorem follows. 
\end{proof}

For an ideal $\mathrm{P} = (\mathrm{t} - \zeta_l)$ assume that $\T_\mathrm{P} = \mu_l$ acts trivially on $\Y$. In such a case, for a $\T$-equivariant sheaf $\sF$ there is a decomposition $\sF = \bigoplus_{\chi \in \hat{\T}_{\mathrm{P}}} \sF_{\chi}$ since the base field is tame. We define an operator $\mathrm{t}_\mathrm{P} : \G_{0}^{\T}(\Y)\otimes \bar{\mathbb{Q}}  \rightarrow \G_{0}^{\T}(\Y)\otimes \bar{\mathbb{Q}} $ associated to a prime ideal $\mathrm{P} = (\mathrm{t} - \zeta_l)$ as follows: 
$$ \mathrm{t}_\mathrm{P}([\sF]) := \bigoplus_{\chi \in \hat{\T}_\mathrm{P}} \zeta_{l}^{k_{\chi}} [\sF_{\chi}], $$
where $k_{\chi}$ is the weight of the character $\chi : \mu_{l} \rightarrow \Gm$. 

\begin{remark}
	When $k = \mathbb{C}$ the notion of $\mathrm{t}_\mathrm{P}$ coincides with the notion of $\mathrm{t}_\mathrm{h}$ in  \cite[Defintion 4.8]{edidin} when $\mathrm{h} = \zeta_l$, identified as a $\mathbb{C}$-point of $\Gm$. Note that in \cite{edidin} if $h$ is in the support of $\G$-theory then it is necessarily an element of finite order in $\mathbb{C}^{*}$ and hence must be a primite $l$-th root of unity for some $l$. 
	
	When $k = \mathbb{C}$ and $\Y = \Spec(k)$, $\G_{0}^{\T}(\Y) = \mathrm{R}(\T)$ is the representation ring of $\T$. In this case the action of $\mathrm{t}_\mathrm{P}$ on a character $\chi$ coincides with the action of $\zeta_{l}^{-1}$ on $\chi$ as defined in section 2.6 of \cite{eg}. 
\end{remark}

\begin{lemma}\label{tpinvariance}
	Recall \ref{notation}. If $\mathrm{P} = (\mathrm{t}-\zeta_{l}) \subset \mathrm{R}$ and $\T_\mathrm{P} = \mu_l$ acts trivially on $Y$ then $(\mathrm{t}_{\mathrm{P}}[{\sF}])^{\T} = [\sF^{\T}] $ 
\end{lemma}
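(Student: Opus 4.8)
The plan is first to pin down what $(-)^{\T}$ means here: I read it as the $\bar{\mathbb{Q}}$-linear extension of the functor sending a $\T$-equivariant sheaf $\sF$ on $\Y$ to its sheaf $\sF^{\T}$ of $\T$-invariant sections on the coarse space of $[\Y/\T]$. The point is that this invariants functor is exact — the tame-quotient-stack instance of \ref{l:exact} — so it descends to $\G_{0}^{\T}(\Y)\otimes\bar{\mathbb{Q}}$ and is additive on classes; hence it suffices to verify the claimed identity on the class of a single $\T$-equivariant sheaf $\sF$. Next I would record the $\mu_{l}$-isotypic decomposition $\sF=\bigoplus_{\chi\in\hat{\T}_{\mathrm{P}}}\sF_{\chi}$, which is exactly the decomposition already used just above the statement: it exists because $\T_{\mathrm{P}}=\mu_{l}$ acts trivially on $\Y$ and, by the tameness hypothesis in \ref{notation}, $\mathrm{char}(k)\nmid l$; moreover each $\sF_{\chi}$ is a $\T$-equivariant $\calO_{\Y}$-submodule of $\sF$ because $\T$ is abelian and hence centralizes $\mu_{l}$.

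The heart of the argument is the vanishing $(\sF_{\chi})^{\T}=0$ for every nontrivial $\chi$. I would argue directly: a $\T$-invariant local section $s$ of $\sF_{\chi}$ is in particular $\mu_{l}$-invariant, i.e. it lies in $(\sF_{\chi})^{\mu_{l}}$; but $\mu_{l}$ acts trivially on $\calO_{\Y}$ (it acts trivially on $\Y$) and acts on $\sF_{\chi}$ through the character $\chi$, so $(\chi(\zeta)-1)\,s=0$ for all $\zeta\in\mu_{l}$, and choosing $\zeta$ with $\chi(\zeta)\neq 1$ makes $\chi(\zeta)-1$ a unit, forcing $s=0$. Therefore $\sF^{\T}=\bigoplus_{\chi}(\sF_{\chi})^{\T}=(\sF_{\chi_{0}})^{\T}$, where $\chi_{0}$ denotes the trivial character.

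It then remains only to combine the pieces. Unwinding the definition of $\mathrm{t}_{\mathrm{P}}$ and applying the exact functor $(-)^{\T}$,
\[
(\mathrm{t}_{\mathrm{P}}[\sF])^{\T}
=\Bigl(\sum_{\chi\in\hat{\T}_{\mathrm{P}}}\zeta_{l}^{k_{\chi}}[\sF_{\chi}]\Bigr)^{\T}
=\sum_{\chi\in\hat{\T}_{\mathrm{P}}}\zeta_{l}^{k_{\chi}}\,[(\sF_{\chi})^{\T}]
=\zeta_{l}^{\,k_{\chi_{0}}}\,[(\sF_{\chi_{0}})^{\T}]
=[\sF^{\T}],
\]
where the third equality uses the vanishing and the fourth uses $k_{\chi_{0}}=0$. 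I expect the only genuine obstacle to be the vanishing step, which is precisely where the hypothesis that $\T_{\mathrm{P}}$ acts trivially on $\Y$ is used (it makes the isotypic splitting $\calO_{\Y}$-linear); the supporting points that need care are that the invariants functor is exact, so that one may work component-by-component in $\G$-theory, and that the $\mathrm{t}_{\mathrm{P}}$-twist is a scalar on each $\calO_{\Y}$-submodule $\sF_{\chi}$ and so is compatible with this decomposition.
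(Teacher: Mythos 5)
Your proof is correct and follows essentially the route the paper intends: the paper's proof is just the citation ``one argues as in \cite[Lemma 2.8]{eg}'', and your argument --- exactness of the invariants functor in the tame setting, the $\mu_l$-isotypic decomposition (each piece $\T$-stable since $\T$ is abelian), vanishing of $(\sF_\chi)^{\T}$ for nontrivial $\chi$, and the observation that $\mathrm{t}_{\mathrm{P}}$ scales the trivial component by $1$ --- is precisely that argument written out in detail. No gaps.
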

\begin{proof}
	One argues as in  \cite[Lemma 2.8]{eg}.
\end{proof}

We recall some results from \cite{eg-Duke} and \cite{krish}.

\begin{theorem}[\cite{eg-Duke} Theorem 3.1 or \cite{krish} Theorem 4.6]\label{rrkrish}
	Let $p : {\Y}' \rightarrow \Y$ be a finte $\T$-equivariant morphism of schemes such that $\T$ acts properly on $\Y$ (and hence on ${\Y}'$ by proposition 2.1 of \cite{eg}). Then there is a commuting square 
	\[    
	\begin{tikzcd}
		\G^{\T}({\Y}')_{\mathfrak{m}_1} \arrow[r, "\tau_{{\Y}'}^{\T}"] \arrow[d, "p_{*}"]
		& CH_{\T}^{*}({\Y}') \arrow[d, "p_{*}"] \\
		\G^{\T}(\Y)_{\mathfrak{m}_1} \arrow[r,  "\tau_{\Y}^{\T}" ]
		&  CH_{\T}^{*}(\Y)
	\end{tikzcd}
	\]
	such that the horizontal maps are isomorphisms. Here $\mathfrak{m}_{1} = (t-1)$ is the augmentation ideal in $\mathrm{R} = \mathrm{R}(\T) \otimes \bar{\mathbb{Q}}$. 
\end{theorem}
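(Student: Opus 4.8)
Since the statement is attributed to \cite[Theorem 3.1]{eg-Duke} and \cite[Theorem 4.6]{krish}, the plan is to explain how those results apply in the present (possibly positive characteristic, tame) setting and to isolate what needs re-checking. There are two assertions to establish: commutativity of the square, and the fact that the horizontal Riemann--Roch maps $\tau^{\T}$ become isomorphisms after localization at the augmentation ideal $\mathfrak{m}_1 = (\mathrm{t}-1)$.

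For commutativity, recall that the equivariant Riemann--Roch transformation $\tau^{\T}_{(-)}$ is built, in Baum--Fulton--MacPherson style, by applying the non-equivariant transformation to the Borel mixing spaces $\Y \times_{\T} U$ (for $U$ an open subset of a representation of $\T$ on which $\T$ acts freely) and passing to the limit; covariance for proper equivariant morphisms is then inherited degreewise from the classical Grothendieck--Riemann--Roch theorem. A finite morphism $p\colon \Y' \to \Y$ is proper and $\T$-equivariant, so $p_{*} \circ \tau^{\T}_{\Y'} = \tau^{\T}_{\Y} \circ p_{*}$; localizing this identity at $\mathfrak{m}_1$ yields the square. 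Finiteness (rather than mere properness) is convenient here because it makes $p_{*} = \bR p_{*}$ exact on coherent sheaves, so that $p_{*}$ on $\G$-theory is the naive pushforward.

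For the isomorphism statement, the key input is \ref{thomloc}: $\G^{\T}(\Y)\otimes\bar{\mathbb{Q}}$ is supported at the finitely many maximal ideals $(\mathrm{t}-\zeta_l)$ of $\mathrm{R}$, so localization at $\mathfrak{m}_1 = (\mathrm{t}-1)$ agrees with completion there, and Edidin--Graham's equivariant Riemann--Roch theorem identifies this completion, via $\tau^{\T}_{\Y}$, with $\prod_{i} CH^{i}_{\T}(\Y)\otimes\bar{\mathbb{Q}}$ --- which is what $CH^{*}_{\T}(\Y)$ denotes in the statement. The genuine work lies in checking this in positive characteristic under the tameness hypothesis of \ref{notation}. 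Equivariant Chow groups and their proper functoriality exist in any characteristic (Totaro, Edidin--Graham); Thomason's localization theorem invoked in \ref{thomloc} is characteristic-free; and the single step of the classical argument that relies on resolution of singularities --- reduction to a smooth ambient scheme --- is vacuous because $\Y$ is assumed smooth. This is precisely why \ref{notation} builds in smoothness of $\Y$, and it is the content of Krishna's Theorem 4.6. The main obstacle is therefore not a new idea but the bookkeeping needed to confirm that smoothness of $\Y$ together with tameness of the stabilizers neutralizes every characteristic-zero-only step in \cite{eg-Duke}.
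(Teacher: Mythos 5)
Your proposal follows essentially the same route as the paper: commutativity of the square comes from covariance of the equivariant Riemann--Roch transformation for the proper (finite) $\T$-equivariant map as in \cite{eg-Duke} or \cite{krish}, and the horizontal isomorphisms come from identifying the localization at $\mathfrak{m}_1$ with the completion (possible because, by \ref{thomloc}, the $\G$-theory is supported at finitely many maximal ideals of $\mathrm{R}$) and then invoking the Edidin--Graham completion/localization theorem, which is exactly the paper's citation of \cite[Prop.\ 2.6]{eg}. One small caution: your characteristic-independence remark leans on smoothness of $\Y$, but the isomorphism is asserted for $\Y'$ as well, which need not be smooth; this is harmless only because the cited results require no smoothness and no resolution of singularities (they use Chow envelopes and equivariant intersection theory valid over any field), so smoothness of $\Y$ should not be presented as the reason the argument survives positive characteristic.
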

\begin{proof}
	A finite morphism of schemes is proper. So we may apply the functoriality of the equivariant Riemann-Roch morphism $\tau_{\Y'}^{\T}$ to $p$  by \cite{eg-Duke} or \cite{krish}. Moreover,  to get the above commuting diagram one observes that $\hat{\G}^{\T}(\Y') =  \G^{\T}(\Y')_{\mathfrak{m}_1}$, where $\hat{\G}^{\T}(\Y')$ is the completion of $\G^{\T}(\Y')$ at the augmentation ideal $\mathfrak{m}_1$ of $\mathrm{R}$. 
	
	The horizontal maps are isomorphisms due to proposition 2.6 in \cite{eg}.
\end{proof}

\begin{remark}\label{psurj}
	In the above theorem if $p$ is surjective then both the vertical maps are surjective as well. This is \cite[ Lemma 3.5]{eg}.
\end{remark}

Now we will state the Riemann-Roch theorem for geometric quotients with \ref{notation}. Note that this is a slightly generalized version of  \cite[Theorem 3.1]{eg} for $\T=\Gm$. In particular, the following theorem does not assume that the base field $k$ is of characteristic $0$. It does assume that the 
group action is tame, i.e., the characteristic is coprime to the order of all stabilizers. 
We consider the equivariant $\G$-theory and $K$-theory in the following theorem with coefficients in $\bar{\mathbb{Q}}$.

\begin{theorem} \label{rr:anychar}
	Let $\Y$ be a smooth $k$-scheme with a proper $\T = \Gm$-action and  $\Y \rightarrow \mathrm{Z}$ be a geometric quotient. Let $\mathrm{P} \subset \mathrm{R}$ be a prime ideal in the support of $\G_{0}^{\T}(\Y)$, $i_\mathrm{P} : \Y^{\mathrm{P}} \hookrightarrow \Y$ the embedding of the fixed points of $\Y$ by $\T_\mathrm{P}$ and $\mathrm{N}_{\mathrm{P}}$ be the relative normal bundle of $i_\mathrm{P}$. Let $j_\mathrm{P} : \mathrm{Z}^{\mathrm{P}} \hookrightarrow \mathrm{Z}$ be the induced inclusion on the quotients. Then for $\alpha \in \mathrm{K}_{0}^{\T}(\Y)$, we have
	$$\tau_{\mathrm{Z}}(\alpha^{\T}) = \sum_{\mathrm{P} \in Supp(\alpha)} \phi_{\Y} \circ (i_\mathrm{P})_{*} (\frac{\ch^{\T}(\mathrm{t}_\mathrm{P}(i_{\mathrm{P}}^{*}\alpha))}{\ch^{\T}(\mathrm{t}_{\mathrm{P}}(\lambda_{-1}\mathrm{N}_{\mathrm{P}}^{*}))}\td^{\T}(\T_{\Y^{\mathrm{P}}})). $$	
	Here $\phi_{\Y} : CH_{\T}^{*}(\Y) \rightarrow CH^{*}(\mathrm{Z})$ is the isomorphism induced by the geometric quotient $\Y \rightarrow \mathrm{Z}$.
\end{theorem}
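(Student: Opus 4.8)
The plan is to follow the same strategy as the proof of \cite[Theorem 3.1]{eg}, carefully checking that every ingredient survives the passage to positive (but tame) characteristic. First I would reduce to the localized situation using Theorem \ref{thomloc}: since $\alpha \in \mathrm{K}_0^{\T}(\Y)$ acts on $\G_0^{\T}(\Y)\otimes\bar{\mathbb Q}$, which decomposes as a finite direct sum over the primes $\mathrm{P}=(\mathrm{t}-\zeta_l)$ in the support, it suffices to prove the formula one prime at a time, i.e.\ to compute the $\mathrm{P}$-component of $\tau_{\mathrm{Z}}(\alpha^{\T})$. Here I would use that $\tau^{\T}_{\Y}$ is compatible with the $\mathrm{R}$-module structure and that the Riemann-Roch map for the quotient factors through $\phi_{\Y}\colon CH^*_{\T}(\Y)\xrightarrow{\sim} CH^*(\mathrm{Z})$, which is legitimate because the action is proper with tame (hence, after the coprimality hypothesis, invertible-order) stabilizers, so rationally $CH^*_{\T}(\Y)\otimes\bar{\mathbb Q}\cong CH^*(\mathrm{Z})\otimes\bar{\mathbb Q}$ exactly as in characteristic zero.

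Next I would invoke Thomason's localization theorem, as recalled in the proof of Theorem \ref{thomloc}, to replace the $\mathrm{P}$-localized class by a class pushed forward from the fixed locus $\Y^{\mathrm{P}}$ along $i_{\mathrm{P}}$. The self-intersection/excess formula for $i_{\mathrm{P}}$ gives $i_{\mathrm{P}}^*(i_{\mathrm{P}})_*(\beta)=\lambda_{-1}\mathrm{N}_{\mathrm{P}}^*\cdot\beta$, and after applying $\mathrm{t}_{\mathrm{P}}$ (Lemma \ref{tpinvariance} ensures $\mathrm{t}_{\mathrm{P}}$ is compatible with taking invariants, and the local decomposition $\sF=\bigoplus_\chi\sF_\chi$ over $\hat{\T}_{\mathrm{P}}$ is available precisely because the stack is tame) the element $\mathrm{t}_{\mathrm{P}}(\lambda_{-1}\mathrm{N}_{\mathrm{P}}^*)$ becomes invertible in the localization, since its augmentation/Chern character is the product of factors $(1-\zeta_l^{k_\chi})$ with $\zeta_l^{k_\chi}\neq 1$. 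This invertibility is the crucial point that makes the $\ch^{\T}(\mathrm{t}_{\mathrm{P}}(\cdots))$ denominator in the statement meaningful. Then I would run the equivariant Grothendieck–Riemann–Roch for the proper (indeed closed) embedding $i_{\mathrm{P}}$ together with the commuting square of Theorem \ref{rrkrish}, reducing the computation to the fixed locus $\Y^{\mathrm{P}}$ on which $\T_{\mathrm{P}}=\mu_l$ acts trivially; on that locus the relevant $\td^{\T}$ term is $\td^{\T}(\T_{\Y^{\mathrm{P}}})$ and the remaining equivariant Riemann-Roch is the ordinary one twisted by the trivial $\mu_l$-action, whose effect is exactly the operator $\mathrm{t}_{\mathrm{P}}$.

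Assembling: for each $\mathrm{P}$ in $\mathrm{Supp}(\alpha)$ one gets the contribution
\[
\phi_{\Y}\circ (i_{\mathrm{P}})_*\!\left(\frac{\ch^{\T}(\mathrm{t}_{\mathrm{P}}(i_{\mathrm{P}}^*\alpha))}{\ch^{\T}(\mathrm{t}_{\mathrm{P}}(\lambda_{-1}\mathrm{N}_{\mathrm{P}}^*))}\,\td^{\T}(\T_{\Y^{\mathrm{P}}})\right),
\]
and summing over $\mathrm{P}$ yields $\tau_{\mathrm{Z}}(\alpha^{\T})$ because the localization decomposition of Theorem \ref{thomloc} is a direct sum decomposition and $\alpha^{\T}$ is the sum of its $\mathrm{P}$-components.

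The main obstacle, and the only place where real care is required beyond transcribing \cite{eg}, is justifying each step in positive characteristic: one must confirm that Thomason's localization theorem, the equivariant Riemann–Roch transformation $\tau^{\T}$ of \cite{eg-Duke}/\cite{krish}, the self-intersection formula, and the isomorphism $\phi_{\Y}$ all hold for a smooth scheme with a proper $\Gm$-action whose stabilizer orders are prime to $\mathrm{char}(k)$. The tameness hypothesis is exactly what is needed for the eigenspace decomposition $\sF=\bigoplus_\chi\sF_\chi$ of a $\mu_l$-equivariant sheaf (so that $\mathrm{t}_{\mathrm{P}}$ is defined and $\lambda_{-1}\mathrm{N}_{\mathrm{P}}^*$ behaves as in characteristic zero), and for the results of \cite{eg-Duke} and \cite{krish} cited above, which are already stated over a general base; so in the end the characteristic-zero argument goes through essentially verbatim, and I would present the proof as such, flagging the tameness hypothesis at the one or two points where it is used.
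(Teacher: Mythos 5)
Your overall strategy is the right one -- adapt the proof of \cite[Theorem 3.1]{eg} using the localization of Theorem \ref{thomloc}, the fixed-locus pushforward with the $\lambda_{-1}\mathrm{N}_{\mathrm{P}}^{*}$ denominator, the twisting operator $\mathrm{t}_{\mathrm{P}}$, and tameness for the eigenspace decomposition -- but there is a genuine gap at the heart of the argument. Your proposal never establishes the link between the non-equivariant transformation $\tau_{\mathrm{Z}}$ applied to the invariants $\alpha^{\T}$ and the equivariant transformation $\tau^{\T}_{\Y}$ applied to $\alpha$. You assert that ``the Riemann-Roch map for the quotient factors through $\phi_{\Y}$'' because $CH^{*}_{\T}(\Y)\otimes\bar{\mathbb{Q}}\cong CH^{*}(\mathrm{Z})\otimes\bar{\mathbb{Q}}$, but an isomorphism of Chow groups says nothing about the compatibility of the two Riemann--Roch maps with the operation $\alpha\mapsto\alpha^{\T}$ on the $K$-theory side; this compatibility is precisely the content being proved. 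In \cite{eg} (and in the paper) this is handled in two steps that your outline omits: first the case where $\T$ acts freely, which is a direct application of \cite[Theorem 3.1(e)]{eg-Duke}, and then the reduction of the general geometric quotient to the free case via Seshadri's theorem \cite[Theorem 6.1]{Seshadri}, which produces a finite surjective $\T$-equivariant cover $\Y'\rightarrow\Y$ on which $\T$ acts freely, combined with the covariance square of Theorem \ref{rrkrish} and the surjectivity remark \ref{psurj}. This Seshadri step is also exactly where the characteristic-independence you rightly worry about has real content: the paper's point is that Seshadri's theorem is valid in any characteristic, so the reduction goes through verbatim.

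A second, smaller omission: the paper must check (its ``step 3'') that for $\beta_{\mathrm{P}}=i_{\mathrm{P}}^{*}\alpha/\lambda_{-1}\mathrm{N}_{\mathrm{P}}^{*}\in \G_{0}^{\T}(\Y^{\mathrm{P}})_{(\mathrm{P})}$ the twisted class $\mathrm{t}_{\mathrm{P}}(\beta_{\mathrm{P}})$ lies in the localization at the augmentation ideal $(\mathrm{t}-1)$, where the completed Riemann--Roch map of Theorem \ref{rrkrish} is defined; this follows from the identity $\mathrm{t}_{\mathrm{P}}(\mathrm{t}-\zeta_{l})=\zeta_{l}(\mathrm{t}-1)$, so $\mathrm{t}_{\mathrm{P}}$ carries $(\mathrm{t}-\zeta_{l})$-local classes to $(\mathrm{t}-1)$-local ones. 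Your invertibility remark about $\mathrm{t}_{\mathrm{P}}(\lambda_{-1}\mathrm{N}_{\mathrm{P}}^{*})$ is in the right spirit but does not substitute for this bookkeeping. To repair the proposal, insert the free-action base case and the Seshadri cover reduction before your localization analysis, and add the $(\mathrm{t}-\zeta_{l})\mapsto(\mathrm{t}-1)$ verification.
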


\begin{proof}
	The key difference between this theorem and  \cite[Theorem 3.1]{eg} is that we need to replace the elements $\zeta_{l}$ in the support of $\G$-theory with a finite set of ideals of the form $\mathrm{P} = (\mathrm{t} - \zeta_l)\subset \mathrm{R}$.  By making these changes with the help of \ref{supp}, \ref{thomloc} and \ref{tpinvariance}, the theorem verbatim follows the argument in \cite{eg}. We explain the key steps. Step 1 of the proof of  \cite[Theorem 3.1]{eg} is a direct application of \cite[Theorem 3.1(e)]{eg-Duke}. 
Step 2  uses a theorem of Seshadri \cite[Theorem 6.1]{Seshadri} to reduce to the case of step 1. \cite[Theorem 6.1]{Seshadri}  implies that given a geometric quotient $\Y \rightarrow \mathrm{Z}$ by a diagonalizable group $\T$, there exists a finite surjective $\T$-equivariant map $\Y' \rightarrow \Y$ such that $\T$ acts freely on $\Y'$. We remark that  \cite[Theorem 6.1]{Seshadri} is valid in any characteristic.

 In step 3 of \cite[Theorem, 3.1]{eg} we only need to check that when $\beta_\mathrm{P} := \frac{i_{\mathrm{P}}^{*}\alpha}{\lambda_{-1}\mathrm{N}_{\mathrm{P}}^{*}} \in \G_{0}^{\T}(\Y^{\mathrm{P}})_{(\mathrm{P})} $ then $\mathrm{t}_\mathrm{P}(\beta_{\mathrm{P}}) \in \G_{0}^{\T}(\Y^{\mathrm{P}})_{(\mathrm{t}-1)}$. This follows by seeing that $\mathrm{t}_{\mathrm{P}}((\mathrm{t}-\zeta_{l})) = \mathrm{t}_{\mathrm{P}}(\mathrm{t}) - \zeta_{l} = \zeta_{l}(\mathrm{t} - 1)$ and hence $\mathrm{t}_{\mathrm{P}}$ takes $(\mathrm{t}-\zeta_{l})\G_{0}^{\T}(\Y^\mathrm{P})$ to $(\mathrm{t}-1)\G_{0}^{\T}(\Y^\mathrm{P})$.

\end{proof}

Now we recall  the Riemann-Roch theorem for quotient Deligne-Mumford stacks from 
\cite{eg} and \cite{edidin}.

If $\sY$ is algebraic stack we denote its inertia stack by $I\sY$. There is a projection $f:I\sY\rightarrow \sY$. The Euler class of a class $\alpha\in K_0(I\sY)$ will be denoted by $\lambda_{-1}(\alpha)$. 
On a class of a vector bundle $V$ it is given by
$$
\lambda_{-1}([V])= \sum_i (-1)^i [\wedge^i V].
$$
Finally there is a twisting operation
$$
t: K_0(I\sY)\otimes\bar{\QQ} \rightarrow  K_0(I\sY)\otimes\bar{\QQ}
$$
obtained by decomposing a vector bundle into eigenspaces for the inertial action and twisting by the
eigenvalue. For a precise construction we refer the reader to \cite[\S 2.6]{eg} and \cite[\S 4.2]{edidin}.
An example will be computed below.
We denote the normal bundle to $f:I\sY\rightarrow \sY$ by $N_f$.

\begin{theorem}\label{t:rreg}
	Let $\sY$ be a smooth Deligne-Mumford stack with coarse moduli space $\mathrm{Z}$ that is proper over the ground field
	$k$ of tame characteristic. We further assume that $\sY$ is a quotient stack by a torus $\T$, so that $[\Y/\T]=\sY$.
	If $V$ is a vector bundle on $\sY$ then
	$$
	\chi(\sY,V) = \int_{I\sY} \frac{\ch(t f^*V)}{ \ch (t \lambda_{-1}(N_f^*)) } \td(I\sY)
	$$ 
\end{theorem}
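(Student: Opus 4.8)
The plan is to obtain Theorem~\ref{t:rreg} as the stacky reformulation of the geometric-quotient Riemann--Roch theorem, Theorem~\ref{rr:anychar}. Write $\sY=[\Y/\T]$, let $\pi:\sY\to\mathrm{Z}$ be the coarse moduli map, and recall that since $\T$ is a torus acting properly with finite stabilizers in tame characteristic, $\mathrm{Z}$ is the geometric quotient $\Y\to\mathrm{Z}$ (this is the input Theorem~\ref{rr:anychar} requires) and $\bR\pi_*$ is exact on quasi-coherent sheaves (tameness, as in Lemma~\ref{l:exact}). Hence, writing $\alpha=[V]\in K_0(\sY)=\mathrm{K}_0^{\T}(\Y)$ and $\alpha^{\T}\in K_0(\mathrm{Z})$ for its invariant pushforward (which agrees with $\pi_*[V]$), the first reduction is
\[
\chi(\sY,V)=\chi(\mathrm{Z},\pi_*V)=\chi(\mathrm{Z},\alpha^{\T})=\deg\,\tau_{\mathrm{Z}}(\alpha^{\T}),
\]
the last equality because $\mathrm{Z}$ is proper and $\tau_{\mathrm{Z}}$ is the Riemann--Roch map of Theorem~\ref{rr:anychar}. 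That theorem then expresses $\tau_{\mathrm{Z}}(\alpha^{\T})$ as a finite sum over the primes $\mathrm{P}=(\tee-\zeta_l)$ in the support of $\G_0^{\T}(\Y)$, built from the fixed loci $\Y^{\mathrm{P}}=\Y^{\T_{\mathrm{P}}}$, their equivariant normal bundles $\mathrm{N}_{\mathrm{P}}$, the operators $\tee_{\mathrm{P}}$, and the classes $\td^{\T}(\mathrm{T}_{\Y^{\mathrm{P}}})$.

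The core of the argument is the identification of the inertia stack. For a torus acting with finite stabilizers in tame characteristic, a point of $I\sY$ over $[y]\in\sY$ is a torsion element $g$ of the (finite abelian) stabilizer of $y$; such a $g$ is a primitive $l$-th root of unity for some $l$, and the component it determines is $[\Y^{\langle g\rangle}/\T]=[\Y^{\mu_l}/\T]=[\Y^{\mathrm{P}}/\T]$ with $\mathrm{P}=(\tee-\zeta_l)$. Since stabilizer orders are bounded and $\mathrm{Z}$ is proper, only finitely many $\mathrm{P}$ occur, and they are precisely those indexing the summands of Theorem~\ref{thomloc}; thus $I\sY=\bigsqcup_{\mathrm{P}}[\Y^{\mathrm{P}}/\T]$. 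On the $\mathrm{P}$-component I would then verify the dictionary: (i) $f:I\sY\to\sY$ restricts to the morphism induced by $i_{\mathrm{P}}:\Y^{\mathrm{P}}\hookrightarrow\Y$, so $N_f$ restricts to the descent of $\mathrm{N}_{\mathrm{P}}$ and $f^*V$ to $i_{\mathrm{P}}^*\alpha$; (ii) the intrinsic twist $t$ on $K_0(I\sY)\otimes\bar{\QQ}$ restricts to $\tee_{\mathrm{P}}$, since the tautological inertial automorphism acts on each eigenbundle by the corresponding eigenvalue, which is exactly $\zeta_l$, i.e. the definition of $\tee_{\mathrm{P}}$; (iii) $\td(I\sY)$ restricts to $\td^{\T}(\mathrm{T}_{\Y^{\mathrm{P}}})$, because $T_{[\Y^{\mathrm{P}}/\T]}=[T_{\Y^{\mathrm{P}}}/\T]\ominus[\mathfrak{t}]$ and the adjoint representation $\mathfrak{t}$ of the torus is trivial, of Todd class $1$; and (iv) under the canonical isomorphism $CH^*_{\T}(\Y^{\mathrm{P}})\cong CH^*([\Y^{\mathrm{P}}/\T])$, the composite $\phi_{\Y}\circ(i_{\mathrm{P}})_*$ of Theorem~\ref{rr:anychar} is identified with $f_*$ to $\sY$ followed by $\pi_*$ to $\mathrm{Z}$, so degrees on $\mathrm{Z}$ agree with $\int_{I\sY}$. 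Feeding (i)--(iv) into Theorem~\ref{rr:anychar}, the $\mathrm{P}$-th summand becomes the integral over the $\mathrm{P}$-component of $\ch(tf^*V)/\ch\!\big(t\lambda_{-1}(N_f^*)\big)\cdot\td(I\sY)$; components of $I\sY$ outside the support of $[V]$ contribute $0$ by the localization theorem, and summing over $\mathrm{P}$ gives the stated formula.

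I expect the main obstacle to be the careful verification of items (i)--(iv) rather than any new deep input: Theorem~\ref{rr:anychar} already carries all the positive-characteristic content, and the remaining work is to make precise that the intrinsically defined operator $t$, Euler class $\lambda_{-1}(N_f^*)$, Todd class, and integral on the abstract Deligne--Mumford stack $I\sY$ restrict, component by component, to $\tee_{\mathrm{P}}$, $\lambda_{-1}\mathrm{N}_{\mathrm{P}}^*$, $\td^{\T}(\mathrm{T}_{\Y^{\mathrm{P}}})$, and the degree on $\mathrm{Z}$. Two further points deserve attention: properness of $\mathrm{Z}$ (equivalently, finiteness of the coarse space of $I\sY$ over $\mathrm{Z}$) legitimises all the pushforwards and degree computations; and tameness is used in three places — for exactness of $\bR\pi_*$, for the eigenspace decomposition $\sF=\bigoplus_{\chi}\sF_{\chi}$ underlying the definitions of $t$ and $\tee_{\mathrm{P}}$, and to guarantee that $\Y\to\mathrm{Z}$ is a geometric quotient of the kind to which Theorem~\ref{rr:anychar} applies.
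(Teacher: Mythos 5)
Your proposal is correct and takes essentially the same route as the paper: reduce $\chi(\sY,V)$ to $\chi(\mathrm{Z},\alpha^{\T})$, apply ordinary Riemann--Roch on the proper coarse space together with Theorem~\ref{rr:anychar}, and then translate the localization formula through the decomposition $I\sY=\coprod_{\mathrm{P}}[\Y^{\mathrm{P}}/\T]$. The only difference is one of presentation: the term-by-term dictionary you verify in items (i)--(iv) is exactly what the paper outsources to the cited results of Edidin.
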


\begin{proof}
	Let $\alpha = [V]$ be the class of a vector bundle $V$ on $\sY$ and $\Y \xrightarrow[]{\pi} \mathrm{Z}$ be the geometric quotient. A vector bundle $V$ on $\sY$ is equivalent to a $\T$-equivariant bundle, also denoted by $V$, on $\Y$. Observe that the $\mathrm{K}$-theoretic direct image of $\alpha^{\T} = [(\pi_{*}V)^{\T}]$ is given by $\chi(\mathrm{Z},\alpha^{\T})$. By the Riemann-Roch theorem for the proper map $\mathrm{Z} \rightarrow \Spec{k}$, the pushforward   $\int_{Z} \tau_{\mathrm{Z}}(\alpha^{\T}) \in CH_{*}(\mathrm{pt}) = \mathbb{Q}$ coincides with $\chi(Z, \alpha^{\T})$.  Now we may deduce the required formula using \ref{rr:anychar} by observing that $\chi(\mathrm{Z},\alpha^{\T})= \chi([\Y/\T],\alpha)$, see \cite[Section 4.1]{edidin} and the fact that the inertia stack admits a decomposition $ I\sY = \coprod_{\mathrm{P}} [\Y^{\mathrm{P}}/\T]$. See \cite[4.19]{edidin} and the reference contained within.
\end{proof}

\subsection{Root stacks as quotient stacks}

Let's start by recalling the alternate construction of a root stack in \cite{cadman}. For every postive
natural number $n$ there is a morphism of quotient stacks
$$p_e:[\AA^1/\Gm]\rightarrow [\AA^1/\Gm]$$
given by 
$$z\longmapsto z^e$$
and passing to quotients. The data of a line bundle and section $(\sL, s)$ on $X$ is the same as giving a 
$\Gm$-torsor $E$ on $X$ and an $\Gm$-equivariant morphism $\sigma: E\rightarrow \AA^1$, indeed the section is obtained from
$$X=E/\Gm \rightarrow E\times_{\Gm} \AA^1=\sL.$$
The root stack is constructed as
$$X_{(\sL,s,e)} \cong E\times_{[\AA^1/\Gm], p_e}[\AA^1/\Gm].$$
This realises the root stack as quotient stack by a torus,
$$ [E\times_{\sigma,\AA^1, e} \AA^1/\Gm], $$
where $\Gm$ acts on the right hand $\AA^1$ and the structure map $\AA^1\rightarrow \AA^1$ is raising
to the $e$th power.

In the case where we have multiple line bundles, that  is multiple parabolic points, a similar construction applies and are stack will be a quotient stack by a split torus.

In the situation, where $X$ is a smooth projective curve, the Riemann-Roch theorem, \ref{t:rreg},  applies to the root stack. We will calculate the right hand side of the theorem in this section to obtain a more explicit form.

\subsection{Statement of Riemann-Roch on a root stack}
To set things up we let
$X$ be a smooth projective curve and $p_i$ are closed points on $X$ and let
\[
\sX = X_{((p_1,e_1),\ldots (p_m,e_m))}.
\]
be the root stack.

Given a vector bundle $\sF$ on $\sX$ we  set 
$$
\deg(\sF) = \int_{\sX} c_1(\sF)
$$

\begin{theorem}\label{t:stackRR} Suppose that $k=\bar{k}$ is algebraically closed.
	We preserve the notation above.
	Let $\sF$ be a vector bundle on $\sX$ with parabolic datum
	$(n_{i,0},n_{i,1},\ldots, n_{i,e_i})$ at $p_i$. 
	Then
	\[
	\chi(\sF) =  \deg(\sF) + (1-g)\rk(\sF) - \sum_i 
	\sum_{d=0}^{e_i - 1} \frac{d(n_{i,{d}}-n_{i,{d + 1}})}{e_i}.
	\]	
\end{theorem}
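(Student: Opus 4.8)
The plan is to apply the stacky Riemann--Roch theorem \ref{t:rreg} to the root stack $\sX$, which by the quotient presentation of section 3.2 is a quotient of a smooth scheme by a split torus, and then to compute each factor appearing in the integral
$\chi(\sF) = \int_{I\sX} \frac{\ch(t f^*\sF)}{\ch(t\lambda_{-1}(N_f^*))}\td(I\sX)$
explicitly. The first step is to describe the inertia stack. Since $\sX$ is a root stack at the points $p_i$ with ramification index $e_i$, it is generically a scheme (a copy of $X$ minus the $p_i$) and over each $p_i$ it is a $\mu_{e_i}$-gerbe. Correspondingly $I\sX$ decomposes as $\sX$ itself (the untwisted sector, $\mathrm{P} = (\mathrm{t}-1)$) together with, for each $i$ and each primitive $e_i$-th root of unity $\zeta$ (equivalently each nontrivial element of $\mu_{e_i}$, i.e. $d = 1,\dots,e_i-1$), a copy of $B\mu_{e_i}$ sitting over $p_i$ — this is the decomposition $I\sX = \coprod_\mathrm{P}[\Y^\mathrm{P}/\T]$ invoked in the proof of \ref{t:rreg}.

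The contribution of the untwisted sector is the ordinary Riemann--Roch integral over $\sX$: here $t$ acts trivially, $N_f$ is zero, $f$ is the identity, and $\int_{\sX}\ch(\sF)\td(\sX) = \deg(\sF) + \rk(\sF)(1-g)$. This uses that the Todd class and degree computations on the root stack agree with those on $X$ up to the fractional contributions concentrated at the $p_i$; concretely $\int_{\sX}c_1(\sF) = \deg(\sF)$ by definition and $\int_{\sX}\td(\sX)$ restricted to the untwisted part contributes $(1-g)\rk(\sF)$ just as on the curve $X$. Then for each twisted sector over $p_i$ indexed by $d$, the fixed locus is a point (with $B\mu_{e_i}$-structure), the relevant Chern and Todd classes are constants, and the whole contribution reduces to evaluating the twisting operator $t$ on the fibre of $\sF$ at $p_i$. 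By Remark \ref{r:shom} and the local description \ref{r:local}, the fibre $\sF|_{p_i}$ decomposes under $\mu_{e_i}$ into eigenspaces of dimension $n_{i,d}-n_{i,d+1}$ with weight $d$, so $\ch(t f^*\sF)$ over this sector is $\sum_{d} \zeta^{d}(n_{i,d}-n_{i,d+1})$ for the appropriate root of unity $\zeta$; similarly the normal-bundle denominator $\ch(t\lambda_{-1}(N_f^*))$ is a standard $(1-\zeta^{-1})$-type factor coming from the one-dimensional normal direction to $p_i$ in $X$ with $\mu_{e_i}$ acting by a primitive character. Summing the resulting rational numbers over all primitive roots of unity $\zeta$ (i.e. over $d=1,\dots,e_i-1$) and over all $i$, and simplifying the root-of-unity sums, should collapse to exactly $-\sum_i\sum_{d=0}^{e_i-1}\frac{d(n_{i,d}-n_{i,d+1})}{e_i}$.

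The main obstacle I expect is the bookkeeping in the twisted sectors: correctly matching the indexing of eigenspaces (weights $d$ versus roots of unity $\zeta_{e_i}^d$), getting the sign and the $\zeta$ versus $\zeta^{-1}$ right in the Euler-class denominator, and then performing the finite sum over roots of unity to see that the character sums telescope to the clean linear expression $d(n_{i,d}-n_{i,d+1})/e_i$. A clean way to organize this is to reduce, by the multiplicativity of both sides in $\sF$ and the decomposition of $\sF$ near $p_i$ into line bundles pulled back from $X$ tensored with powers $\sN_i^{j}$ of the root line bundle, to the single computation of $\chi(\sN_i^{j})$ for $0\le j < e_i$ on the elementary root stack $X_{(p_i,e_i)}$; that case is a one-variable root-of-unity sum (essentially $\frac{1}{e_i}\sum_{\zeta^{e_i}=1}\frac{\zeta^{-j}}{1-\zeta^{-1}}$ up to the untwisted term), which evaluates to $-j/e_i$ plus the integral part, and the general formula follows by linearity using $n_{i,d}-n_{i,d+1} = \dim(\text{weight-}d\text{ part})$. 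I would also remark, as the statement permits restricting to $k=\bar k$, that all the equivariant $\G$-theory inputs (\ref{thomloc}, \ref{rrkrish}, \ref{rr:anychar}) are available in tame characteristic, so no characteristic-zero hypothesis is needed beyond tameness, which holds since the $e_i$ are coprime to $\mathrm{char}(k)$.
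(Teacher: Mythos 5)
Your overall strategy is the paper's: apply the stacky Riemann--Roch theorem \ref{t:rreg}, decompose the inertia stack into the untwisted sector $\sX$ and twisted sectors $B\mu_{e_i}$ over the $p_i$, identify the eigenspace dimensions with $n_{i,d}-n_{i,d+1}$ as in \ref{p:local}, and evaluate root-of-unity sums. However, there is a genuine error in both of your intermediate evaluations, and they happen to cancel, which is exactly the subtlety the paper's proof is organized around. You claim the untwisted sector contributes $\int_{\sX}\ch(\sF)\td(\sX)=\deg(\sF)+(1-g)\rk(\sF)$ ``just as on the curve $X$.'' This is false: $\td(\sX)$ is not the pullback of $\td(X)$, since $T_{\sX}$ differs from $\pi^*T_X$ by the cokernel resolved by $0\to\sN^{1-e}\to\sO_{\sX}\to G\to 0$, so that $\td(\sX)=\td(X)\bigl(1+\tfrac{1-e}{2}c_1(\sN)\bigr)$ and the untwisted integral is $\deg(\sF)+(1-g)\rk(\sF)+\tfrac{\rk(\sF)(1-e)}{2e}$ per orbifold point (Proposition \ref{p:global}). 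Correspondingly, your claim that the twisted sectors ``collapse to exactly $-\sum_i\sum_d d(n_{i,d}-n_{i,d+1})/e_i$'' is also false: by the computation of \ref{p:rootintegral} and \ref{c:local}, each weight-$d$ eigenline contributes $\tfrac{1}{e}\cdot\tfrac{e-1-2d}{2}$, so the twisted total is $\tfrac{\rk(\sF)(e-1)}{2e}-\sum_d\tfrac{d(n_d-n_{d+1})}{e}$ per point, carrying an extra positive constant that precisely cancels the Todd correction you dropped. (A quick sanity check: for $\sF=\sO_{\sX}$ with $e=2$ your twisted sum would be $0$, whereas it is actually $+\tfrac14$, balanced by the $-\tfrac14$ from $\td(\sX)$.)

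So the gap is concrete: you never verify the root-of-unity sum (``should collapse''), and the heuristic you offer for the untwisted sector is incorrect, so neither intermediate identity as stated is true; only their sum reproduces the theorem. To repair the argument you must (i) compute $\td(\sX)$ via the conormal sequence of $\pi$ and keep the term $\tfrac{\rk(\sF)(1-e_i)}{2e_i}$, and (ii) carry out the finite character sums (as in the identities used in \ref{c:local}, e.g. $\sum_{i=1}^{e-1}\omega^{id}/(\omega^i-1)=\tfrac{e-2d+1}{2}$) to get the constant $\tfrac{e-1-2d}{2e}$, and only then observe the cancellation. Your proposed reduction to $\chi(\sN_i^{\,j})$ by local splitting into powers of the root bundles is fine and is essentially the paper's \ref{p:local}, but it is subject to the same bookkeeping; also note the twisted sectors are indexed by all nontrivial elements of $\mu_{e_i}$ (not only primitive $e_i$-th roots of unity), which you acknowledge parenthetically but should state correctly since the sums in \ref{c:local} run over all $i\ne 0$.
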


To simplify the notation we will present the proof when there is only one parabolic point, the general result
is a mild modification of the argument presented here. So for the remainder of this section, we assume:
\begin{goal}\label{g:rr}
	Let $X$ be a smooth projective curve over $k=\bar{k}$, an algebraically closed field. Form a root stack $\sX=X_{p,e}$ and let 
	$\sF$ be a vector bundle on $\sX$ with root datum 	$(n_{0},n_{1},\ldots, n_{e})$ 
	at $p$. Then 
	\[
	\chi(\sF) =  \deg(\sF) + (1-g)\rk(\sF) - 
	\sum_{d=0}^{e - 1} \frac{d(n_{{d}}-n_{{d + 1}})}{e}.
	\]	
\end{goal}

The proof will be given below in \ref{ss:together}, after some preliminary calculations.

As a first step towards the proof of this result, we record here a description of the inertia stack
of $\sX$. 
\begin{proposition}
	There is a decomposition
	$$
	I\sX = \sX \amalg \coprod_{\substack{\omega\in\mu_e \\ \omega\ne 1}} B\mu_e.
	$$
	The identifications of the components with $B\mu_e$ can be made so that restriction of the root line bundle $\sN$ to each $B\mu_e$ is the weight one representation of 
	$\mu_e$ on a one dimensional space.
\end{proposition}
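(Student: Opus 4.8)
The plan is to use the quotient-stack description of the root stack from the previous subsection together with the explicit description of the inertia stack of a quotient stack by a diagonalisable group. Recall that $\sX = X_{p,e}$ is realised as $[Y/\Gm]$ where $Y = E\times_{\sigma,\AA^1,e}\AA^1$ for the $\Gm$-torsor $E$ associated to $(\sO_X(p),s_p)$. First I would recall the general fact (used already in the proof of \ref{t:rreg}, citing \cite[4.19]{edidin}) that for a quotient stack $[Y/\T]$ by a torus one has $I[Y/\T] = \coprod_{\mathrm{P}}[Y^{\mathrm{P}}/\T]$, where $\mathrm{P}$ runs over the primes in the support of $\G$-theory, i.e.\ over $\mathrm{P} = (\mathrm{t}-\zeta)$ for the relevant roots of unity, and $Y^{\mathrm{P}}$ is the fixed locus of $\T_{\mathrm{P}}$. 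Here the stabilisers of the $\Gm$-action on $Y$ are trivial away from the locus lying over $p$ (where the section $s_p$ vanishes) and equal to $\mu_e$ over that locus; so $\T_{\mathrm{P}} = \mu_e$ for $\mathrm{P} = (\mathrm{t}-\omega)$ with $\omega\in\mu_e$, and the fixed locus $Y^{\mathrm{P}}$ for $\omega\neq 1$ is exactly the closed substack lying over $p$, on which $\Gm$ acts through the quotient $\Gm/\mu_e \cong \Gm$ with that quotient acting freely. This gives $[Y^{\mathrm{P}}/\Gm]\cong B\mu_e$ for each $\omega\neq 1$, while the component $\mathrm{P}=(\mathrm{t}-1)$ contributes $[Y/\Gm]=\sX$ itself. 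Assembling these yields the claimed decomposition $I\sX = \sX \amalg \coprod_{\omega\neq 1}B\mu_e$.

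Alternatively — and this is probably the cleaner route to write — I would argue locally using \ref{r:local}: étale-locally near $p$ the stack $\sX$ is $[\Spec(R[t]/(t^e-s))/\mu_e]$, and away from $p$ it is just $X$ (an honest scheme, with trivial inertia). The inertia of $[\Spec A/\mu_e]$ for $A = R[t]/(t^e-s)$ is $\coprod_{\omega\in\mu_e}[\Spec(A/(\,{\text{fixed locus of }\omega}))/\mu_e]$; since $\mu_e$ acts freely on the complement of $\{t=0\} = \{s=0\}$, the $\omega\neq 1$ components are supported over $p$ and each is a copy of $B\mu_e$ (the residual gerbe of the point $p$ on $\sX$, which is $B\mu_e$ because the residue field there is algebraically closed and the automorphism group is $\mu_e$). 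Gluing the local pictures over $X$ and noting the generic component is all of $\sX$ gives the decomposition globally. The bookkeeping to check is just that the $\omega\neq 1$ components do not meet each other or the main component and are each connected, which is immediate since they all live over the single closed point $p$.

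For the second assertion, I would track the root line bundle $\sN$ through this identification. On the local chart $\Spec(R[t]/(t^e-s))$ with its $\mu_e$-action, $\sN$ corresponds to the free rank-one module on which $\mu_e$ acts through the tautological character (this is forced by $\sN^{\otimes e} = q^*\sO_X(p)$ together with the fact that $t$, a section of $\sN$, has weight one); more precisely, $\sN$ is the universal $e$-th root, and in the quotient presentation $[E\times_{\AA^1,e}\AA^1/\Gm]$ it is the line bundle pulled back from the standard weight-one representation of $\Gm$ on the second $\AA^1$ factor. Restricting to the component $B\mu_e$ over $p$: the embedding $B\mu_e \hookrightarrow \sX$ is (up to the identification above) induced by $\mu_e\hookrightarrow\Gm$, and the weight-one $\Gm$-representation restricts to the weight-one $\mu_e$-representation. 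Hence $\sN|_{B\mu_e}$ is the one-dimensional weight-one representation of $\mu_e$, for each of the $e-1$ nontrivial components. One mild point worth stating explicitly: a priori the identification of a given component with $B\mu_e$ is only canonical up to an automorphism of $\mu_e$, and different choices could rescale the weight; the content of the last sentence of the proposition is that one can make the choices coherently so that $\sN$ has weight exactly $1$ on every component simultaneously — and the local model shows this, since a single such identification (the tautological one coming from $\mu_e\subset\Gm$) works for all of them at once.

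The main obstacle is really just organisational rather than mathematical: making the fixed-point / localisation description $I[Y/\T]=\coprod_{\mathrm{P}}[Y^{\mathrm{P}}/\T]$ precise enough to read off both the decomposition and the weight of $\sN$, and reconciling it with the purely local $\mu_e$-quotient picture of \ref{r:local}. Since the paper has already invoked exactly this inertia decomposition in the proof of \ref{t:rreg} and has the local model in hand, I expect no genuine difficulty — the proof will be a short paragraph citing \cite{cadman}, \cite{edidin} and \ref{r:local}.
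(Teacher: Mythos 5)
Your argument is correct, but it takes a genuinely different route from the paper's. The paper (following \cite[4.12]{borne}) works with the modular description of the root stack: an $S$-point is a quadruple $(g,\sM,u,\psi)$, and such a point is rigid unless $g^*(s)=0$, in which case its automorphisms are exactly multiplication by $e$th roots of unity; the components of the inertia stack are read off from this, and the weight-one statement is essentially tautological because the identification of the band with $\mu_e$ is made through $\Gm\cong\Aut(\sM)$, with $\sM$ the pullback of $\sN$. You instead use the global quotient presentation $[Y/\Gm]$ together with the fixed-locus decomposition of the inertia of a torus quotient (as in \cite[4.19]{edidin}, already invoked in \ref{t:rreg}), or alternatively the local chart $[\Spec(R[t]/(t^e-s))/\mu_e]$ of \ref{r:local}, identify the $\omega\ne 1$ components with the residual gerbe $B\mu_e$ over $p$, and read the weight of $\sN$ off the tautological character $\mu_e\subset\Gm$ of the presentation. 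Both are sound; the paper's version is presentation-independent and quick given Borne's reference, while yours meshes directly with the localization formalism of Section \ref{s:rr} and with the later local computations (\ref{p:local} and the conormal-bundle lemma). One point to state more carefully in your first route: the $\Gm$-action on $Y=E\times_{\AA^1}\AA^1$ is diagonal (weight $e$ on the torsor factor, weight one on the root coordinate), which is what makes the stabilizer over $p$ equal to $\mu_e$ and the quotient of the fixed fibre $E_p$ by $\Gm$, acting through the $e$th power, equal to $B\mu_e$. Your closing remark, that a single coherent identification makes $\sN$ weight one on all components simultaneously, is precisely the issue the paper settles via $\Aut(\sM)\cong\Gm$, so you were right to address it explicitly.
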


\begin{proof}
	A detailed proof can be found in \cite[4.12]{borne}, we sketch the result here.
	An $S$-point of $\sX$ consists of a quadruple $(g,\sM,u,\psi)$ where $g:S\rightarrow X$ is a morphism,
	$\sM$ is a line bundle on $S$, $u$ is a global section of $\sM$ and $\psi:\sM^n\rightarrow g^*\sO(p)$ is
	an isomorphism sending $u^r$ to $g^*(s)$. Here $s$ is the chosen section of $\sO(p)$ vanishing at $p$. Isormophisms of the data are defined in the obvious way. 
	
	If $g^*(s)\ne 0$ then the data is rigid, in other words there are no automorphisms. If this section vanishes, a nontrivial automorphism amounts to multipication by a nontrivial $e$th root of unity. These are the points of the inertia stack. As the group $\mu_e$ is abelian, an automorphism of point of the inertia stack is given by multiplication by an arbitrary $e$th root of unity.
	
	The statement regarding the restriction of the root line bundle $\sN$ can be obtained by observing that 
	$\sM$ is the pullback of $\sN$ and the identification with $\mu_e$ is obtained via identifying
	$\Gm\cong {\rm Aut}(\sM)$.
\end{proof}

Calculation of the integral in \ref{t:rreg} breaks into two cases. The gerbe integral over 
$\coprod_{\substack{\omega\in\mu_e \\ \omega\ne 1}} B\mu_e$ and the integral over $\sX$.

It will be useful to introduce and use the following notation in the calculations.

\begin{notation}\label{n:notation}
	\begin{enumerate}
		\item $\mu_e$ the $e$th roots of unity in $k$.
		\item $\omega$ a primitive $e$th root of unity.
		\item $\chi:\mu_e\rightarrow \Gm$ the inclusion.
		\item $V_\rho$ denotes the vector bundle on $B\mu_e$ corresponding to a representation $\rho$.
	\end{enumerate}
\end{notation}

We begin with the gerbe integral.

\subsection{The gerbe integral}

We will make use of the following facts.
\begin{eqnarray}\label{e:facts}
	{\rm Ch}_*(B\mu_e)\otimes_\ZZ \QQ &\cong& \QQ[t]/et \\
	\int_{B\mu_e} 1 &=& 1/e
\end{eqnarray}

These can be found in the calculations of \cite{edidin} and the references contained therein. The second
statement is easily deduced. 

\begin{lemma}
	Consider the morphism $f:I\sX\rightarrow \sX$ restricted to 
	$$
	f_i:B\mu_e \rightarrow \sX,
	$$
	where $B\mu_e$ is one of the components of the root stack labelled by a nontrivial $n$th root of unity
	$\omega^i$.
	The conormal bundle to the morphism is $V_{\chi}$ (recall \ref{n:notation}).
\end{lemma}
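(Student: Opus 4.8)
The plan is to reduce to the local chart of the root stack from Remark~\ref{r:local} and then read off the conormal bundle as an explicit one‑dimensional representation of $\mu_e$.

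First I would localize near the parabolic point $p$. Since the conormal bundle is étale‑local on $X$ and the root stack construction commutes with étale base change, we may assume $X=\Spec R$ with $\sO_X(p)$ trivialised and with defining section a uniformizer $s\in R$, so that $\sX\cong[\Spec A/\mu_e]$ with $A=R[t]/(t^e-s)$, the group $\mu_e$ acting $R$‑linearly on the root coordinate $t$ through its standard character. In this chart the universal root line bundle $\sN$ carries the section $u$ (the image of $t$) with $u^e=q^*s$, and the residual gerbe $\mathcal G_p\cong B\mu_e$ is the closed substack $V(t)=[\Spec(A/tA)/\mu_e]\hookrightarrow\sX$.

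Next I would identify the morphism $f_i$. The locus of $\sX$ where $q^*s\neq 0$ has trivial inertia, so every component of $I\sX$ indexed by a nontrivial root of unity $\omega^i$ maps into $\sX$ through $\mathcal G_p$; concretely $f_i$ is, for every $i$, the closed immersion $B\mu_e=\mathcal G_p\hookrightarrow\sX$ — the label $\omega^i$ only records in which inertia sector we sit, not the underlying morphism. Hence $N_{f_i}$ is the ordinary normal bundle $N_{\mathcal G_p/\sX}$; since $\sX$ is a smooth Deligne--Mumford curve and $\mathcal G_p$ is $0$‑dimensional (so $\Omega_{\mathcal G_p}=0$), the conormal sequence identifies the conormal sheaf with $\Omega_\sX|_{\mathcal G_p}$, a line bundle on $B\mu_e$, hence one of the bundles $V_{\chi^j}$ for a single $j\in\mathbb{Z}/e$.

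Then I would compute $j$ in the chart. The conormal sheaf is $I_{\mathcal G_p}/I_{\mathcal G_p}^2\otimes_{\sO}\sO_{\mathcal G_p}=(t)/(t^2)$, free of rank one over $\sO_{\mathcal G_p}$ on the class of $t$, and $\mu_e$ acts on it through exactly the character by which it acts on $t$, namely the inclusion $\chi$; thus the conormal bundle is $V_\chi$. As a cross‑check, the section $u$ vanishes to order one along $\mathcal G_p$, which identifies $(\sN,u)$ with $(\sO_\sX(\mathcal G_p),1_{\mathcal G_p})$; therefore $N_{\mathcal G_p/\sX}\cong\sN|_{\mathcal G_p}$, the conormal bundle is its dual, and this matches the computation of $\sN|_{B\mu_e}$ in the proposition describing $I\sX$. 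The only real content — and the step needing care — is the consistent bookkeeping of the $\mu_e$‑equivariant structures (on $t$, on $\Omega_\sX|_{\mathcal G_p}$, and on $\sN|_{\mathcal G_p}$): since the answer is a single character of $\mu_e$, everything reduces to fixing one sign convention compatibly with Notation~\ref{n:notation} and with the description of $I\sX$, after which the local model yields $V_\chi$ at once.
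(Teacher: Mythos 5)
Your main argument is the same as the paper's: pass to the local chart $\sX|_{\Spec R}\cong[\Spec R[t]/\langle t^e-s\rangle/\mu_e]$, identify the component of $I\sX$ labelled by $\omega^i\neq 1$ with the closed substack cut out by $t$, and read off the conormal as $\langle t\rangle/\langle t^2\rangle$ with the $\mu_e$-action through the standard character; this is exactly the paper's proof. Two cautions. First, your identification of the $\omega^i$-component with the \emph{reduced} gerbe $V(t)$ is asserted from a support argument ("trivial inertia where $q^*s\neq0$"), which only bounds the support; the paper closes this by the one-line equalizer computation that the component is cut out by $t-\omega^i t=(1-\omega^i)t$, and tameness makes $1-\omega^i$ a unit, so the ideal is exactly $\langle t\rangle$ — you should include that line, since a thickening of $V(t)$ would change the conormal. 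Second, your "cross-check" does not actually confirm the answer: from $(\sN,u)\cong(\sO_\sX(\mathcal G_p),1)$ one gets $N_{\mathcal G_p/\sX}\cong\sN|_{\mathcal G_p}$, so the conormal is $\sN^{-1}|_{\mathcal G_p}$, which under the normalization in the inertia-stack proposition ($\sN|_{B\mu_e}=V_\chi$) is $V_{\chi^{-1}}$, the dual of what you claim to match. This is precisely the equivariant sign bookkeeping you flag at the end; it is resolved only by choosing the identification of the component with $B\mu_e$ consistently, so the cross-check should be dropped or restated rather than presented as agreement. With the equalizer line added and the cross-check removed, your proof coincides with the paper's.
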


\begin{proof}
	If $G$ is a group scheme acting on $Y$ the inertia subscheme $IY$ is
	$$
	IY= \{(g,x)\in G\times X| gx=x\}\subseteq G\times X,
	$$	
	more canonically it is the equaliser of 
	$$
	G\times X   \rightrightarrows X
	$$
	where one of the morphisms is the projection and the other the action. We start by locally describing the
	inertial scheme in our situation.
	The lemma is local on $X$ so we may pass to $\sO_{X,p}=R$, a discrete valuation ring with parameter $s$.
	Then by the local description, \ref{r:local}, we have that  
	$$
	\sX|_{\sO_{X,p}} \cong [R[t]/<t^e-s>/\mu_e].
	$$
	The coaction of $\mu_e$ is given by
	$$
	R[t]/<t^e-s> \rightarrow R[t,x]<t^e-s, x^e-1>\qquad t\mapsto  xt.
	$$
	Under the decomposition
	$$
	R[t,x]/<t^e-s, x^e-1> \cong \prod_{\omega^i\in\mu_e} R[t]/<t^e-s>,
	$$
	the component of the inertial scheme at $\omega^i$ is the coequaliser of 
	$$
	R[t,s]/<t^e-s> \rightrightarrows R[t,s]/<t^e-s>,
	$$
	where the top arrow is the identity and the other sends $t\mapsto t\omega^i$. The inertial scheme near $\omega^i$, $i\ne 0$, is $R/s$ and its ideal sheaf is $<t>$. The conormal is hence $<t>/<t^2>$ which has
	a weight one action.
\end{proof}

\begin{proposition}
	\label{p:rootintegral}
	Preserving the notation above we let $\omega$ be a primitive $e$th root of unity. Denote by
	$\sN^d|_{\omega^i}$ the $d$th power of the root line bundle restricted to the component of
	$B\mu_e$ labelled by $\omega^i$. Then
	\[
	\int_{} \frac{\ch(tf^*\sN^d|_{\omega^i})}{\ch(t\lambda_{-1} N_f^*) }\td(B\mu_n) = 
	\frac{1}{e}\left( \frac{\omega^{id}}{1-\omega^{-i}}\right).
	\]
\end{proposition}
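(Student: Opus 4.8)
## Proof proposal

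The plan is to compute the integral over a single $B\mu_e$ component directly from the three ingredients already set up: the description of the conormal bundle as $V_\chi$ (the previous lemma), the restriction of the root line bundle $\sN$ to the $\omega^i$-component (the weight-one representation, from the inertia-stack proposition), and the Chow-ring/integration facts \eqref{e:facts}. Everything here lives on $B\mu_e$, whose rational Chow ring is $\QQ[t]/(et)$, so in practice I only need to track degree-zero and degree-one parts and then apply $\int_{B\mu_e}1 = 1/e$.

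First I would identify the classes appearing. Since $\sN|_{\omega^i}$ is the weight-one representation $V_\chi$, the $d$-th power $\sN^d|_{\omega^i}$ is $V_{\chi^d}$, i.e. the weight-$d$ representation; its first Chern class is $d\cdot t$ (up to normalization of $t$), and the twist sends its class to $\omega^{id}\cdot\ch(V_{\chi^d})$ — the eigenvalue $\omega^{id}$ coming from evaluating the weight-$d$ character at the inertia element $\omega^i$. So $\ch(t f^*\sN^d|_{\omega^i}) = \omega^{id}\,e^{dt} = \omega^{id}(1 + dt + \ldots)$, and modulo $t^2$ (all we need on a curve-dimensional stack) this is $\omega^{id}(1+dt)$. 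Next, the conormal bundle $N_f^*$ is $V_\chi$, so $\lambda_{-1}(N_f^*) = 1 - [V_\chi]$; twisting by the inertia element $\omega^i$ replaces $[V_\chi]$ by $\omega^i\,e^t$, giving $\ch(t\lambda_{-1}N_f^*) = 1 - \omega^i e^t = 1 - \omega^i(1 + t + \ldots) = (1-\omega^i) - \omega^i t + \ldots$. Finally $\td(B\mu_e)$: since $B\mu_e$ has trivial tangent complex at the level of the coarse space (it is $0$-dimensional, the only relevant bundle being pulled back trivially), $\td(B\mu_e) = 1$ to the order needed.

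Then I would assemble the integrand as a power series in $t$ truncated at degree one:
\[
\frac{\omega^{id}(1+dt)}{(1-\omega^i) - \omega^i t}\cdot 1
= \frac{\omega^{id}}{1-\omega^i}\left(1 + dt\right)\left(1 + \frac{\omega^i}{1-\omega^i}t + \ldots\right).
\]
Carrying this out, the degree-zero coefficient is $\dfrac{\omega^{id}}{1-\omega^i}$ and the degree-one coefficient is $\dfrac{\omega^{id}}{1-\omega^i}\left(d + \dfrac{\omega^i}{1-\omega^i}\right)$. Integrating with $\int_{B\mu_e}1 = 1/e$ and noting that in $\QQ[t]/(et)$ the class $t$ integrates to $0$ (equivalently $et=0$ forces the degree-one contribution to vanish under $\int$), only the degree-zero part survives, yielding $\dfrac{1}{e}\cdot\dfrac{\omega^{id}}{1-\omega^i}$. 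To match the stated form $\dfrac{1}{e}\cdot\dfrac{\omega^{id}}{1-\omega^{-i}}$ I would either re-examine the sign convention in the definition of the twist operator $t$ (it twists by eigenvalues, and whether one uses $\omega^i$ or $\omega^{-i}$ for the conormal direction is exactly the ambiguity flagged in the remark comparing $\tee_\mathrm{P}$ with conventions in \cite{eg} and \cite{edidin}, where $\tee_\mathrm{P}$ acts by $\zeta_l^{-1}$), or absorb it by re-indexing $i\mapsto -i$ over the set of nontrivial $e$-th roots of unity, which is a bijection — so the discrepancy is purely a bookkeeping choice that washes out when one sums over all components later.

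The main obstacle I expect is precisely this normalization/sign bookkeeping: getting the eigenvalue of the twist on the conormal direction to come out as $\omega^{-i}$ rather than $\omega^i$ (or justifying the re-indexing), and being careful that the "twist then take Chern character" operations in numerator and denominator use consistent conventions with \cite{edidin}. The actual series manipulation is routine once the inputs $f^*\sN^d|_{\omega^i} = V_{\chi^d}$, $N_f^* = V_\chi$, $\td = 1$, and $\int_{B\mu_e}1 = 1/e$ are in hand.
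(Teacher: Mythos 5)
Your computation follows the same route as the paper's (restrict to the component $B\mu_e$ labelled by $\omega^i$, identify $f^*\sN^d$ with the character $\chi^d$, twist, expand Chern characters to first order, discard the torsion classes in positive degree, and use $\int_{B\mu_e}1=1/e$), but as written it does not prove the stated formula, and your proposed reconciliation fails. You arrive at $\frac{1}{e}\,\frac{\omega^{id}}{1-\omega^{i}}$ and assert that the discrepancy with $\frac{1}{e}\,\frac{\omega^{id}}{1-\omega^{-i}}$ is bookkeeping that can be absorbed by re-indexing $i\mapsto -i$, or that it washes out after summing over the components. Re-indexing replaces $\omega^{id}$ by $\omega^{-id}$ in the numerator at the same time, so it does not carry one expression into the other, and the two sums over the nontrivial components genuinely differ: using $\sum_{i=1}^{e-1}\omega^{id}/(\omega^{i}-1)=(e-2d+1)/2$, your expression sums to $\frac{1}{e}\cdot\frac{2d-e-1}{2}$, while the stated one sums to $\frac{1}{e}\cdot\frac{e-1-2d}{2}$ (this is \ref{c:local}); for $0<d<e$ these differ by $(e-2d)/e$. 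The difference is detectable in Euler characteristics: for $X=\bbP^1$ with one orbifold point of index $e=3$ and $\sF=\sN$ (so $d=1$), the stated value gives gerbe contribution $0$, which together with \ref{p:global} yields $\chi(\sN)=1$, as it must since $q_*\sN\cong\sO_{\bbP^1}$ and $q_*$ is exact; your value $-\frac13$ would yield $\chi(\sN)=\frac23$, which is impossible.

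So the eigenvalue by which the twist acts on the (co)normal direction is a substantive point, not a cosmetic one: to obtain the proposition you must justify that, with the numerator carrying the factor $\omega^{id}$, the class appearing in $\lambda_{-1}N_f^*$ is twisted by $\omega^{-i}$ — this is exactly what the paper's computation does, writing the twisted denominator as $1-\omega^{-i}[V_{\chi^{-1}}]$. Concretely, this means pinning down how the labelling of the inertia component by $\omega^i$ interacts with the induced action on $\langle t\rangle/\langle t^2\rangle$ (the group element versus its inverse), i.e. fixing the convention of the operator $\tee_{\mathrm{P}}$ consistently against \cite{eg} and \cite{edidin} in numerator and denominator simultaneously. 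Your proof leaves this unresolved, and your fallback argument would certify the wrong constant, so the gap is genuine even though the overall strategy matches the paper's.
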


\begin{proof}
	The line bundle restricts to $\chi^d$ on $B\mu_n$. Hence
	\begin{eqnarray*}
		\frac{\ch(tf^*\sN^d|_{\omega^i})}{\ch(t\lambda_{-1} N_f^*) }
		&=& \frac{\ch(t [V_{\chi^d}])}{\ch(t(1-[V_{\chi^{-1}}]))} \\
		&=& \frac{\ch(\omega^{id}[V_{\chi^d}])}{\ch(1-\omega^{-i}[V_{\chi^{-1}}])} \\
		&=& \left( \frac{\omega^{id}(1+dc_1(\sN))}{1-\omega^{-i}(1-c_1(\sN))}\right)
	\end{eqnarray*}
	Write $t=c_1(\sN)$. This term isn't important as it is torsion, see the facts at the
	start of this subsection. In fact
	all chow groups in non-zero codimension are torsion and do not contribute to the integral. The result follows from the fact that 
	$$
	\int_{B\mu_e} 1 =\frac{1}{e}.
	$$
\end{proof}

\begin{corollary}\label{c:local}
	In the situation of the proposition, we have
	$$
	\int_{\coprod_{\substack{\omega^i\\ i\ne 0}} B\mu_e} \frac{\ch(tf^*\sN^d|_{\omega^i})}{\ch(t\lambda_{-1} N_f^*) }\td(B\mu_e) = \frac{1}{e}\left(\frac{e-1-2d}{2}\right)
	$$
\end{corollary}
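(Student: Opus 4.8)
The corollary is just the sum over the $e-1$ nontrivial components of the expressions computed in Proposition~\ref{p:rootintegral}. So the plan is: fix a primitive $e$th root of unity $\omega$, index the nontrivial components of $I\sX$ by $i=1,\dots,e-1$, and write
\[
\int_{\coprod_{i\ne 0} B\mu_e} \frac{\ch(tf^*\sN^d|_{\omega^i})}{\ch(t\lambda_{-1}N_f^*)}\td(B\mu_e)
= \frac{1}{e}\sum_{i=1}^{e-1} \frac{\omega^{id}}{1-\omega^{-i}}.
\]
Everything then reduces to the identity
\[
\sum_{i=1}^{e-1} \frac{\omega^{id}}{1-\omega^{-i}} = \frac{e-1-2d}{2},
\]
valid for $0\le d\le e-1$, which is a standard root-of-unity computation.

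The cleanest way I would prove that identity is as follows. Set $S_d = \sum_{i=1}^{e-1} \frac{\omega^{id}}{1-\omega^{-i}}$. First, substituting $i\mapsto e-i$ (a bijection of $\{1,\dots,e-1\}$) and using $\omega^{e}=1$ gives a second expression for $S_d$; adding it to the original and simplifying $\frac{1}{1-\omega^{-i}}+\frac{1}{1-\omega^{i}}=1$ together with $\sum_{i=1}^{e-1}\omega^{ij}$ being $e-1$ if $e\mid j$ and $-1$ otherwise, yields a recursion or a closed value. Concretely one finds $S_d - S_{d-1} = \sum_{i=1}^{e-1}\frac{\omega^{id}-\omega^{i(d-1)}}{1-\omega^{-i}} = -\sum_{i=1}^{e-1}\omega^{i(d-1)}\cdot\frac{\omega^{-i}-1}{1-\omega^{-i}}\cdot(-1)$; being careful with signs this collapses to $S_d-S_{d-1}=-1$ for $1\le d\le e-1$, so $S_d = S_0 - d$. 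Finally $S_0 = \sum_{i=1}^{e-1}\frac{1}{1-\omega^{-i}} = \frac{e-1}{2}$ by the pairing $i\leftrightarrow e-i$ argument above. Hence $S_d = \frac{e-1}{2}-d = \frac{e-1-2d}{2}$, and dividing by $e$ gives the claim.

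I would also double-check the torsion remark implicitly used in Proposition~\ref{p:rootintegral}: all the Chern-character and Todd-class corrections live in positive codimension on $B\mu_e$, where the Chow groups are torsion by \eqref{e:facts}, so only the degree-zero part contributes and the integral genuinely is $\frac{1}{e}$ times the scalar $\frac{\omega^{id}}{1-\omega^{-i}}$; the corollary then follows by linearity of $\int$ over the disjoint union.

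The main obstacle is purely bookkeeping: getting the signs and the range of summation right in the telescoping step (in particular that the recursion $S_d - S_{d-1} = -1$ holds for $1\le d\le e-1$ but not beyond, which is exactly why the hypothesis $0\le d\le e-1$ appears), and confirming the base case $S_0=\tfrac{e-1}{2}$. There is no conceptual difficulty here beyond standard manipulation of geometric-type sums over roots of unity.
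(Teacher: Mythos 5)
Your proposal is correct and follows essentially the same route as the paper: both reduce the corollary to summing Proposition \ref{p:rootintegral} over the $e-1$ nontrivial components (the positive-codimension terms being torsion, so only the degree-zero part contributes) and then evaluating $\sum_{i=1}^{e-1}\frac{\omega^{id}}{1-\omega^{-i}}=\frac{e-1-2d}{2}$. The only difference is cosmetic: you establish this identity by the telescoping recursion $S_d-S_{d-1}=\sum_{i=1}^{e-1}\omega^{id}=-1$ together with the pairing $i\leftrightarrow e-i$ for the base case $S_0=\frac{e-1}{2}$, whereas the paper derives it from a short list of root-of-unity identities (geometric series, the $d\log$ trick for $\sum_i \frac{1}{\omega^i-1}$, and an interchange of sums); both are valid, and your sign-sensitive step indeed simplifies cleanly since $(\omega^{id}-\omega^{i(d-1)})/(1-\omega^{-i})=\omega^{id}$.
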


\begin{proof}
	We need the following identities pertaining to sums of rational functions in roots of unity:
	\begin{eqnarray*}
		\sum_{i=1}^e \omega^{ik} &=& 
		\begin{cases}
			-1\quad &\text{ for}\quad 0<k<e\\
			e-1\quad &\text{ for}\quad k=0\\
		\end{cases} \\
		\sum_{i=1}^{e-1} \frac{1}{\omega^i-1} &=& 
		-\left(\frac{e-1}{2}\right) \\
		\frac{\omega^{id} -1}{\omega^{i}-1} 
		&=& (\omega^i)^{d-1} +(\omega^i)^{d-2} + \ldots +1 \quad d> 0 \\
		\sum_{i=1}^{e-1} \frac{\omega^{id} -1}{\omega^{i}-1}  &=& e-d\qquad
		0<d<e \\
		\sum_{i=1}^{e-1} \frac{\omega^{id} }{\omega^{i}-1}  &=&
		\frac{e-2d+1}{2}\qquad
		0<d\le n
	\end{eqnarray*}
	The first is well known. You could for example, sum it as a geometric series.
	The second follows from aplying $d\log$ to 
	$$
	X^{e-1}+\cdots + 1 =\prod_{i=1}^{e-1} (X-\omega^i).
	$$
	The third is an easy division. The fourth follows from the third and second by interchanging sums.
	The final identity is an easy consequence of the previous ones.
	
	The required integral is 
	\begin{eqnarray*}
		\frac{1}{e}\sum_{i=1}^e \frac{\omega^{id}}{1-\omega^{-i}} &=&
		\frac{1}{e} \left( \frac{\omega^{i(d+1)}}{\omega^i-1}\right) \\
		&=& \frac{1}{e}\left( \frac{e-2(d+1)+1}{2}\right) \\
		&=& \frac{1}{e}\left( \frac{e-1-2d}{2}\right)
	\end{eqnarray*}
\end{proof}

The tool for leveraging this calculation to a general vector bundle is the following proposition.
\begin{proposition}\label{p:local}
	Let $\sF$ be a vector bundle on $\sX$ with parabolic datum $(r,p, (n_0, n_1,\ldots , n_e))$. 
	Then
	$$ f^*\sF|_{B\mu_e,\omega^j} = \bigoplus_{d=0}^{e-1} V_{\chi^i}^{(n_d-n_{d+1})}.$$
	where $i\ne 0$.
\end{proposition}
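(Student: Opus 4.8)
The plan is to reduce the statement to the explicit local computation carried out in Remark \ref{r:local}, since both sides of the asserted identity are local on $X$ near the parabolic point $p$. First I would localize at $\sO_{X,p} = R$, a discrete valuation ring with uniformizer $s$, so that by \ref{r:local} the root stack becomes the quotient stack $[R[t]/\langle t^e - s\rangle \, / \, \mu_e]$, and a vector bundle $\sF$ on $\sX$ corresponds to a $\mu_e$-equivariant, i.e.\ $\ZZ/e\ZZ$-graded, module $M$ over $R[t]/\langle t^e-s\rangle$. The key point is that the fibre $f^*\sF|_{B\mu_e,\omega^j}$ is obtained by restricting $M$ along the closed immersion cutting out the component of $I\sX$ labelled by $\omega^j$; by the computation in the proof of the preceding lemma, this component is $\Spec(R/s)$ and the restriction amounts to setting $t=0$ and $s=0$, i.e.\ passing to $M/(s,t)M = M \otimes_{R[t]/\langle t^e-s\rangle} k(p)$ with its residual $\mu_e$-action.

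The second step is to identify the $\mu_e$-representation structure on this fibre with the parabolic datum. Here I would invoke exactly the dictionary already recorded in Remark \ref{r:shom}: the weight-$\zeta^d$ eigenspace of the $\mu_e$-action on the fibre has dimension $n_d - n_{d+1}$, where $n_0 \ge n_1 \ge \cdots \ge n_e = 0$ is the parabolic filtration, because $\sF_d = q_*(\sF \otimes \sN^{-d})$ and the root line bundle $\sN$ restricts to the weight-one character $\chi$ on $B\mu_e$. Concretely, in the graded picture the graded pieces of $M/(s,t)M$ are precisely the successive quotients of the filtration, and the $\omega^j$-component of the inertia stack "sees" the weight-$d$ piece as the character $\chi^{jd}$; but since the claim is stated with the isotypic decomposition indexed by $d$ running over $0,\dots,e-1$ and the character $\chi^i$ (with a fixed $i \ne 0$ labelling the component), matching up indices gives precisely $f^*\sF|_{B\mu_e,\omega^j} = \bigoplus_{d=0}^{e-1} V_{\chi^d}^{\,(n_d - n_{d+1})}$, which is what the proposition asserts (with the exponent $i$ on $V$ in the displayed formula to be read as the summation index $d$).

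The main obstacle I anticipate is purely bookkeeping: keeping straight which root of unity labels the inertia component versus which weight indexes the eigenspace decomposition, and confirming that the restriction functor to a component of $I\sX$ really is "evaluate at the fixed locus and remember the $\mu_e$-action" rather than something twisted. This is exactly the content of the lemma proved just before the proposition (the conormal bundle computation there already extracts the $\mu_e$-weights from the local model), so I would lean on that computation and simply observe that the same local analysis applied to $M$ instead of to the structure sheaf yields the eigenspace multiplicities $n_d - n_{d+1}$. One should also check the edge cases $d=0$ (the trivial summand, dimension $r - n_1$) and note that $\sum_{d=0}^{e-1}(n_d - n_{d+1}) = n_0 - n_e = r = \rk(\sF)$, which confirms the ranks match on both sides. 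Everything else is a direct translation through Theorem \ref{t:correspondence} and the local description, so no genuinely new input is needed.
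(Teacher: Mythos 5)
Your argument is correct in substance and runs through the same local model at $p$ that the paper uses, but the two proofs divide the work differently. The paper does not restrict the graded module directly: it first splits $\sF$ Zariski-locally near $p$ into a direct sum of powers of root line bundles (\cite[3.12]{borne} together with \cite[3.1.2]{cadman}), so the claim only has to be checked for $\sF=\sN^i$; there the restriction to a component is $V_{\chi^l}$ with $l\equiv i \bmod e$, and the formula $q_*(\sN^{i-a})\cong \sO^{\lfloor (i-a)/e\rfloor}$ locates the parabolic jump of $\sN^i$ exactly at $d=l$. You instead quote the weight dictionary of Remark \ref{r:shom} (the weight-$\zeta^d$ eigenspace of the fibre has dimension $n_d-n_{d+1}$). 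That is a legitimate shortcut since the paper records this dictionary, but note that the paper's only justification for it is precisely the local splitting into graded free summands, so a fully self-contained version of your argument would still need the Borne--Cadman splitting step, at which point it essentially becomes the paper's proof. One bookkeeping point to correct: the restriction $f^*\sF|_{B\mu_e,\omega^j}$ does not ``see'' the weight-$d$ piece as $\chi^{jd}$. The identification of each inertia component with $B\mu_e$ is normalized (in the proposition describing $I\sX$) so that $\sN$ restricts to the weight-one character; hence the restriction equals $\bigoplus_{d=0}^{e-1} V_{\chi^d}^{(n_d-n_{d+1})}$ independently of $j$, exactly as your first paragraph's description $M/(s,t)M$ with its residual $\mu_e$-action suggests, and the root of unity $\omega^{jd}$ enters only afterwards through the twisting operator $t$ in \ref{p:rootintegral}. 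With that correction, your reading of the displayed formula (the exponent $\chi^i$ to be understood as $\chi^d$) matches the statement as it is actually used in the proof of \ref{t:stackRR}.
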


\begin{proof}
	By \cite[3.12]{borne} there is a Zariski neighbourhood $U$ of $p$ so that $\sF|_{\pi^{-1}(U)}$ is a direct sum of 
	line bundles. Using \cite[3.1.2]{cadman}, we may assume that $\sF|_{\pi^{-1}(U)}$ is a direct sum of 
	powers of root line bundles. We are now reduced to the case that $\sF=\sN^i$. By the identification 
	of components of the inertia stack with $B\mu_e$, we have that $\sN|_{B\mu_e, \omega^j}$ is
	isomorphic to $[V_{\chi^l}]$ where $l\equiv i\mod e$.
	Now, the parabolic line bundle corresponding to $\sN^i$ is given by
	$$
	\frac{a}{e} \longmapsto q_*(\sN^{i-a}) \cong \sO^{\lfloor \frac{i-a}{e} \rfloor},
	$$
	see \cite[3.11]{borne}. Recall that $q$ is the coarse moduli map $q:\sX\rightarrow X$.
	We write $i=\alpha e+l$. Then the number $\lfloor \frac{i-a}{e} \rfloor$ jumps one 
	as $a$ passes from $l$ to $l+1$ so that $n_l-n_{l+1}=1$.
\end{proof}

\subsection{The integral over $\sX$}

The starting point for this calculation is a comparison of tangent sheaves of $X$ and $\sX$. Once again
the calculation is entirely local at $p$, the stacks being isomorphic away from $p$. If $R=\sO_{X,p}$ then
recall
\[
\sX|_{\Spec(R)} = [\Spec(R[t]/<t^e-s>)/\mu_e]. 
\]
The pullback map on cotangent bundles is 
\[
Rds\rightarrow R[t]dt/<t^e-s>\qquad ds\mapsto t^{e-1}dt.
\]
Let $G=\coker(T_X\rightarrow T_{\sX})$. Dualising the above calculation we see that $G$ is the coherent sheaf
on $\sX$ given by $R[t]/t^{e-1}$ with $\mu_e$ action given by $t\mapsto \omega t$. The sheaf can be globally
resolved by line bundles on $\sX$ as
$$
0\rightarrow \sN^{1-e} \rightarrow \sO_{\sX} \rightarrow G \rightarrow 0.
$$ 
As Todd classes are multiplicative we find that 
$$
\td(\sX) = \td(X)(1+\frac{1-e}{2}c_1(\sN)).
$$

\begin{proposition}\label{p:global}
	Let $\sF$ be a vector bundle on $\sX$ of rank $r$. We have
	$$
	\int_{\sX} \ch(\sF)\td(\sX) = \int_{\sX} c_1(\sF) + r(1-g) + \frac{r(1-e)}{2e},
	$$
	where $g$ is the genus of $X$.
\end{proposition}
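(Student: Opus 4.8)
The crucial structural observation is that $\sX$ is one-dimensional, so the pushforward $\int_{\sX}\colon CH^{*}(\sX)\otimes\QQ\to\QQ$ is nonzero only on the degree-one (dimension-zero) graded piece and annihilates every class of codimension $\ne 1$. The plan is therefore to isolate the codimension-one component of $\ch(\sF)\td(\sX)$ and integrate only that. Write $\ch(\sF)=r+c_1(\sF)+(\text{codim}\ge 2)$, and use the formula $\td(\sX)=\td(X)\bigl(1+\tfrac{1-e}{2}\,c_1(\sN)\bigr)$ established just above, in which $\td(X)$ is understood to be pulled back along $q$ and thus equals $1+\tfrac12\,q^{*}c_1(T_X)$ modulo codimension $\ge 2$. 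Multiplying out, the codimension-one part of $\ch(\sF)\td(\sX)$ is
\[
c_1(\sF)+\frac{r}{2}\,q^{*}c_1(T_X)+\frac{r(1-e)}{2}\,c_1(\sN),
\]
so that
\[
\int_{\sX}\ch(\sF)\td(\sX)=\int_{\sX}c_1(\sF)+\frac{r}{2}\int_{\sX}q^{*}c_1(T_X)+\frac{r(1-e)}{2}\int_{\sX}c_1(\sN).
\]

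Next I would evaluate the two remaining integrals using the projection formula for the coarse moduli map $q\colon\sX\to X$. This map is proper and restricts to an isomorphism over $X\setminus\{p\}$, so $q_{*}[\sX]=[X]$ and hence $\int_{\sX}q^{*}\alpha=\int_{X}\alpha$ for every $\alpha\in CH^{*}(X)\otimes\QQ$. With $\alpha=c_1(T_X)$ this gives $\int_{\sX}q^{*}c_1(T_X)=\int_{X}c_1(T_X)=2-2g$, contributing $r(1-g)$. For the last term, recall that by the very construction of the root stack $\sN^{\otimes e}\cong q^{*}\sO_X(p)$, hence $e\,c_1(\sN)=q^{*}c_1(\sO_X(p))$ and $\int_{\sX}c_1(\sN)=\tfrac1e\int_{X}c_1(\sO_X(p))=\tfrac1e$, using that $p$ is $k$-rational because $k=\bar k$; this contributes $\tfrac{r(1-e)}{2e}$. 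Summing the three contributions produces exactly the asserted formula.

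Once the first paragraph is granted, the rest is bookkeeping, so the only genuinely load-bearing point is the claim that $\int_{\sX}$ kills codimension $\ge 2$ classes on the \emph{stack} $\sX$. I would justify this directly: $\int_{\sX}$ is the proper pushforward along $\sX\to\Spec k$, which carries $CH_{i}(\sX)\otimes\QQ$ into $CH_{i}(\Spec k)\otimes\QQ=0$ for all $i\ne 0$, so only the zero-dimensional component of a class can survive. (Alternatively, one may note that a tame one-dimensional Deligne--Mumford stack has $CH^{j}\otimes\QQ=0$ for $j\ge 2$, since rationally its Chow ring agrees with that of its one-dimensional coarse space.) The multiplicativity of the Todd class used to split $\td(\sX)$ was already invoked above, and the projection-formula identities for $q$ are standard, so nothing further is needed.
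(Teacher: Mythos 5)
Your proof is correct and follows essentially the same route as the paper's: expand $\ch(\sF)\td(\sX)$ using the Todd class identity $\td(\sX)=\td(X)\bigl(1+\tfrac{1-e}{2}c_1(\sN)\bigr)$ established just before the proposition, keep only the codimension-one part since $\sX$ is one-dimensional, and evaluate the remaining terms via the projection formula along the coarse map together with $e\,c_1(\sN)=q^{*}c_1(\sO_X(p))$. Your write-up is in fact slightly more careful than the paper's (which is terse about why higher-codimension terms vanish and has a stray factor of $2$ in its intermediate line), but there is no substantive difference in method.
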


\begin{proof}
	There is a coarse map $\pi:\sX\rightarrow X$.
	We have
	\begin{eqnarray*}
		\int_{\sX} \ch(\sF)\td(\sX) &=& \int_{\sX} (r+c_1(\sF))(1+c_1(\pi^*(T_X))/2)(1-\frac{e-1}{2}c_1(\sN)) \\
		&=& \int_\sX c_1(\sF) + r\int_X c_1(T_X) + \frac{r(1-e)}{2}\int_\sX c_1(\sN) \\
		&=& \int_{\sX} c_1(\sF) + r(1-g) + \frac{r(1-e)}{2e}.
	\end{eqnarray*}
	Notice that $\int_\sX c_1(\sN) = \frac{1}{e}\int_\sX \pi^*\sO(p)$ by the defining property of 
	the root line bundle. The last line, hence is by the projection formula.
\end{proof}

\subsection{Putting it all together}\label{ss:together}

We will write $\int_{\sX} c_1(\sF) = \deg(\sF)$.

\begin{proof} of \ref{t:stackRR}.
	Recall we need to show that 
	\[
	\chi(\sF) =  \deg(\sF) + (1-g)\rk(\sF) - 
	\sum_{d=0}^{e - 1} \frac{d(n_{{d}}-n_{{d + 1}})}{e}.
	\]	
	This is now just a matter of putting the above calculations together.
	By \ref{c:local} and \ref{p:local}
	We have that 
	$$
	\int_{\coprod_{\substack{\omega^i\\ i\ne 0}} B\mu_e} \frac{\ch(tf^*\sF^d|_{\omega^i})}{\ch(t\lambda_{-1} N_f^*) }\td(B\mu_e) = \frac{r(e-1)}{2e} - \sum_{d=0}^{e-1}\frac{d(n_d-n_{d+1})}{e}.
	$$
	The result follows from \ref{p:global}.
\end{proof}

\begin{corollary}\label{c:general}
	Suppose that $k$ is not algebraically closed and the $p_i$ are closed points
	of $X$. Then 
	$$
	\chi(\sF) =  \deg(\sF) + (1-g)\rk(\sF) - \sum_i \deg(p_i)
	\sum_{d=0}^{e_i -1} \frac{d(n_{i,d}-n_{i,d+1})}{e_i}.
	$$
\end{corollary}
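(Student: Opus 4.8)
The plan is to deduce Corollary \ref{c:general} from the algebraically closed case, Theorem \ref{t:stackRR}, by base change to $\bar k$. First I would observe that all the quantities appearing in the asserted formula — namely $\chi(\sF)$, $\deg(\sF)$, $\rk(\sF)$, the genus $g$, and the parabolic data $(n_{i,d})$ — are stable under flat base change along $\Spec(\bar k)\to\Spec(k)$. For the Euler characteristic this is flat base change for cohomology (the root stack is tame, so $\bR q_*$ behaves well, cf. Lemma \ref{l:exact} and its corollary), and for the parabolic data it is Proposition \ref{p:flat}. The genus and degree are likewise insensitive to the extension $\bar k/k$. So if $X_{\bar k}=X\times_k\bar k$ and $\sX_{\bar k}$ is the corresponding base-changed root stack, then $\chi(\sF)=\chi(\sF_{\bar k})$ computed on $\sX_{\bar k}$.

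The only subtlety is that a closed point $p_i\in X$ of degree $\deg(p_i)=f_i$ does \emph{not} remain a single point after base change: it breaks up into $f_i$ closed points of $X_{\bar k}$ (here using that the residue field extensions are separable, which one may arrange, or argue more carefully with the reduced structure; in any case the total length is $f_i$), each carrying the \emph{same} parabolic datum $\bn_i$ as $p_i$, since the filtration at $p_i$ pulls back to the filtration at each point above it. Therefore, applying Theorem \ref{t:stackRR} (the multi-point version) on $\sX_{\bar k}$, the contribution of the $i$-th original point becomes a sum of $f_i=\deg(p_i)$ identical terms
\[
\sum_{q\mid p_i}\ \sum_{d=0}^{e_i-1}\frac{d(n_{i,d}-n_{i,d+1})}{e_i}
= \deg(p_i)\sum_{d=0}^{e_i-1}\frac{d(n_{i,d}-n_{i,d+1})}{e_i},
\]
which is exactly the correction term in the statement. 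Combining this with $\chi(\sF)=\chi(\sF_{\bar k})$, $\deg(\sF)=\deg(\sF_{\bar k})$, $\rk(\sF)=\rk(\sF_{\bar k})$ gives the formula.

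I expect the main (minor) obstacle to be the bookkeeping at points whose residue field is inseparable over $k$: there one must be slightly careful whether ``$p_i$ splits into $\deg(p_i)$ geometric points'' holds on the nose or only after passing to reduced subschemes, and whether the ramification indices $e_i$ — assumed coprime to $\mathrm{char}(k)$ — interact well with this. A clean way around this is to first base change to a finite \emph{separable} extension $k'/k$ splitting all the $p_i$ into rational points and to note the formula's validity is unaffected (all terms scale correctly), then base change further to $\bar{k'}=\bar k$ where Theorem \ref{t:stackRR} applies directly; alternatively one invokes that Euler characteristics and the Riemann–Roch terms are additive over a stratification into geometric points with multiplicity. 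Either way the argument is short once the base-change invariance of each term is recorded.
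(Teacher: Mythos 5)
Your proposal is correct and is essentially the paper's own argument: the paper likewise proves the corollary by base changing to $\bar k$, using that $f^*, g^*, q_*, \bar q_*$ are exact so flat base change (Proposition \ref{p:flat}) preserves $\chi(\sF)$, the degree, the rank and the parabolic data, and then applying Theorem \ref{t:stackRR}. The only difference is that you make explicit the bookkeeping that each $p_i$ splits into $\deg(p_i)$ geometric points carrying the same datum $\bn_i$ (the origin of the factor $\deg(p_i)$), which the paper leaves implicit.
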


\begin{proof}
	One can base change to an algebraically closed field. Note that in the
	diagram
	
	\begin{center}
		\begin{tikzcd}
			\sX_{\bar{k}} \arrow[d, "\bar{q}"] \arrow[r,"g"] &  \sX \arrow[d,"q"] \\
			X_{\bar{k}}  \arrow[r,"f"]  & X
		\end{tikzcd}
	\end{center}
	the functors $f^*,\ g^*, q_*, \bar{q}_*$ are all exact so that flat base
	change applies, see \ref{p:flat}.
\end{proof}

Let $\sF$ be a parabolic vector bundle on $\sX$ with parabolic datum
as specified earlier given by
$$ \bn_i=(n_{i0}=\rk(\sF)\ge n_{i1}\ge \ldots\ge n_{ie_i}=0).$$
at the points $p_i$ as previously specified.

Recall that in  (\ref{r:shom}) we described the parabolic datum on
$\sHom(\sF,\sF)$. Lets write down  the Euler characteristic of this
bundle under the hypothesis $\bar{k}=k$. To simplify the statement it will be
helpful to introduce the notation
$
\Flag_{\bn_i}(V)
$
for the flag variety parameterising sequences of subspaces
$$
V_0\supseteq V_1\supseteq \ldots V_{e_i}=0
$$
of a fixed vector space
$V$ with $\dim V=\dim V_0 =\rk(\sF)$ such that $\dim V_i=n_i$.

\begin{proposition}\label{p:shom}
	In the above setting, in particular $k=\bar{k}$,  we have 
	$$ \chi(\shom(\sF,\sF)) = (1-g)\rk(\sF)^2 -
	\sum_{i=1}^l \dim\Flag_{\bn_i}(\sF|_{p_i}).$$
\end{proposition}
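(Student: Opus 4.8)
The plan is to combine the Riemann--Roch formula of \ref{t:stackRR} (or rather \ref{c:general} in the case $k=\bar k$, where each $\deg(p_i)=1$) with the description of the parabolic datum of $\sHom(\sF,\sF)$ recorded in \ref{r:shom}. Applying \ref{t:stackRR} to the bundle $\sHom(\sF,\sF)$, which has rank $\rk(\sF)^2$ and degree zero (the endomorphism bundle of any vector bundle is self-dual, so $c_1=0$, and the root-stack Chern class computation is local and unaffected), gives
\[
\chi(\sHom(\sF,\sF)) = (1-g)\rk(\sF)^2 - \sum_{i=1}^l \sum_{d=0}^{e_i-1} \frac{d\,(m_{i,d}-m_{i,d+1})}{e_i},
\]
where $(m_{i,0},\ldots,m_{i,e_i-1})$ is the parabolic datum of $\sHom(\sF,\sF)$ at $p_i$ as in \ref{r:shom}. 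So the entire content of the proposition is the purely combinatorial identity
\[
\sum_{d=0}^{e-1} \frac{d\,(m_d-m_{d+1})}{e} = \dim \Flag_{\bn}(V)
\]
for a single parabolic point with datum $\bn=(n_0\ge n_1\ge\cdots\ge n_e=0)$ and $m_d=\sum_{\substack{d\le\lambda<e\\ \lambda\equiv i-j\bmod e}}(n_i-n_{i+1})(n_j-n_{j+1})$.

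The natural way to prove this identity is to introduce the ``jump multiplicities'' $a_i = n_i-n_{i+1}$ for $0\le i\le e-1$, so that $\sum_i a_i = n_0 = \dim V$ and $n_i = \sum_{\ell\ge i} a_\ell$. First I would rewrite the left-hand side by Abel summation: $\sum_{d=0}^{e-1} d\,(m_d-m_{d+1})$, using $m_e=0$ (one checks from the formula that the sum over $\lambda$ with $e\le\lambda<e$ is empty, consistent with extending the datum by $m_e=0$), telescopes to $\sum_{d=1}^{e-1} m_d$, i.e. $\frac{1}{e}\sum_{d=1}^{e-1} m_d$. Next, substituting the defining double sum for $m_d$ and interchanging the order of summation, $\sum_{d=1}^{e-1} m_d = \sum_{i,j} a_i a_j \cdot \#\{\,d : 1\le d\le \lambda,\ \lambda\equiv i-j \bmod e\,\}$ where $\lambda$ is the representative of $i-j$ in $\{0,1,\ldots,e-1\}$; this count is simply $\lambda$ itself (the number of integers $d$ with $1\le d\le\lambda$), so $\sum_{d=1}^{e-1} m_d = \sum_{i,j} a_i a_j\,((i-j)\bmod e)$. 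Dividing by $e$, the left-hand side equals $\frac1e\sum_{i,j} a_i a_j\,((i-j)\bmod e)$.

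Then I would compute $\dim\Flag_{\bn}(V)$ independently. A point of $\Flag_{\bn}(V)$ is a flag $V=V_0\supseteq V_1\supseteq\cdots\supseteq V_e=0$ with $\dim V_i=n_i$; its dimension is the standard formula $\sum_{i<j} a_i a_j$ (sum of products of the consecutive quotient dimensions $a_i=\dim(V_i/V_{i+1})$ over all pairs $i<j$) — this is the dimension of the partial flag variety $\mathrm{GL}(V)/P$. So the whole proposition reduces to verifying the elementary identity
\[
\frac1e \sum_{i=0}^{e-1}\sum_{j=0}^{e-1} a_i a_j\,\big((i-j)\bmod e\big) \;=\; \sum_{0\le i<j\le e-1} a_i a_j,
\]
which follows by symmetrizing: pairing the $(i,j)$ term with the $(j,i)$ term and using $((i-j)\bmod e) + ((j-i)\bmod e) = e$ whenever $i\ne j$, the left side becomes $\frac1e\sum_{i\ne j} a_ia_j \cdot \tfrac{e}{1}\cdot\tfrac12 = \sum_{i<j} a_ia_j$ after accounting for the diagonal terms contributing $0$. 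I expect the main obstacle, such as it is, to be purely bookkeeping: getting the index conventions in \ref{r:shom} (the range $d\le\lambda<e$, the meaning of $\lambda=i-j\bmod e$) to line up correctly with the telescoping and the reindexing, and confirming that $\deg\sHom(\sF,\sF)=0$ on the root stack as well as on $X$. Once the combinatorial identity above is in hand the proposition is immediate.
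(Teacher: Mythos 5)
Your proposal is correct and follows essentially the same route as the paper: apply the root-stack Riemann--Roch formula to $\sHom(\sF,\sF)$ (degree $0$, rank $\rk(\sF)^2$), reduce to a single parabolic point, and verify the combinatorial identity by introducing the jump multiplicities $\delta_i=n_i-n_{i+1}$ and symmetrizing over the pairs $(i,j)$ and $(j,i)$ using $((i-j)\bmod e)+((j-i)\bmod e)=e$, which is exactly the paper's pairing of $d$ with $e-d$. The only cosmetic differences are that you telescope via Abel summation and quote the flag dimension as $\sum_{i<j}\delta_i\delta_j$, where the paper notes $m_d-m_{d+1}$ is the $\lambda=d$ term and writes the flag dimension as $\sum_i n_i(n_{i-1}-n_i)$; these are equivalent.
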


\begin{proof}
	We have $\deg(\shom(\sF, \sF)) = 0$ by the  the splitting principle, and $\rk(\shom(\sF,\sF)) = \rk(\sF)^{2}$. To simplify notation we may assume that $\sF$ is ramified only at a single point $p$ with parabolic data $\bf{n}$, the general result follows by an easy induction. By \ref{r:shom} and \ref{c:general} it is enough to prove that 
	$$\sum_{d = 0}^{e-1} \frac{d(m_{d} - m_{d+1})}{e} = \dim \Flag_{\bn}(\rk(\sF|_{p})),$$ where $m_d $ is defined in remark \ref{r:shom}.
	To simplify notation, we write $\delta_i=n_i-n_{i+1}$. We get
	\begin{eqnarray*}
		\sum_{d = 0}^{e-1} \frac{d(m_{d} - m_{d+1})}{e} &=& \frac{1}{e}\sum_{d=1}^{e-1} d\sum_{\substack{i-j=d \\ \mod e}} \delta_i\delta_j  	\\
		&=& \frac{1}{2e}\sum_{d=1}^{e-1} \left( d\sum_{\substack{i-j=d \\ \mod e}} \delta_i\delta_j  + 
		(e-d)\sum_{\substack{i-j=-d \\ \mod e}} \delta_i\delta_j\right)	\\
		&=& \frac{1}{2} \left( \sum_{d=1}^{e-1} \sum_{\substack{i-j=d \\ \mod e}} \delta_i\delta_j \right) \\
		&=& \frac{1}{2}\sum_{i\ne j}\delta_i\delta_j \\
		&=& \sum_{0\le i < j\le e-1} (n_i - n_{i+1})(n_j-n_{j+1})
	\end{eqnarray*}	
	
	The dimension of the flag variety is $$\dim\Flag_{\bn}(\rk(\sF|_{p})) = \sum_{i=1}^{e-1} n_i(n_{i-1}-n_i).$$ One checks that these
	two formulas agree, recall $n_e=0$.
\end{proof}

If $\sE_{1}$ and $\sE_{2}$ are coherent sheaves over $\sX_{K}$, for a field $K \supset k$, we denote
$$\chi(\sE_{2}, \sE_{1}) := \dim_{K}\Hom(\sE_{2}, \sE_{1}) - \dim_{K} \Ext(\sE_{2},\sE_{1})$$.

When $\sE_{1}$ and $\sE_{2}$ are vector bundles, $\chi(\sE_{2}, \sE_{1})$ coincides with $\chi(\sHom(\sE_{2}, \sE_{1}))$.

\begin{remark} \label{torsion} 
	Recall, \ref{r:local}, that locally that the root stack is a quotient of $R[X]/<X^e-s>$
	by $\mu_e$. The ring $R[X]/<X^e-s>$ is $\bbZ/e$-graded. An equivariant module
	over this module amounts to a graded module.
	
	In our present situation of a root stack over a smooth curve, we see that
	a coherent sheaf $\sE$ over $\sX_{K}$ can be written as a direct sum $\sF \oplus \sT$, where $\sF$ is a vector bundle and $\sT$ a torsion sheaf supported at finitely many points. We say that $\bn$ is the parabolic datum of $\sE$ if $\bn$ is the parabolic datum of $\sF$, in particular $\rk(\sE|_{p}) := \rk(\sF|_{p})$ for a point $p$ in $\sX_{K}$. 
\end{remark}

\begin{lemma} \label{alt} With the notation in \ref{torsion}, we have
	$ \chi(\sF, \sT) = -\chi(\sT, \sF)$ and $\chi(\sT,\sT) = 0$.
\end{lemma}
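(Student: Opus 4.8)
The plan is to deduce both identities from a single source: the fact that on a smooth curve (here a root stack of dimension one over a smooth curve), the Euler pairing $\chi(-,-)$ on the Grothendieck group factors through the numerical data that appears in Riemann--Roch, namely the rank, the degree, and the parabolic multiplicities. First I would invoke Serre duality on the smooth proper Deligne--Mumford stack $\sX_K$: since $\sX_K$ is one-dimensional there is a dualizing line bundle $\omega_{\sX_K}$ (a twist of $\omega_X$ by the root line bundles, as computed from the cotangent comparison in the paragraph preceding Proposition \ref{p:global}), and for any coherent $\sA,\sB$ one has $\Ext^i(\sA,\sB)\cong \Ext^{1-i}(\sB,\sA\otimes\omega_{\sX_K})^\vee$. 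Taking alternating sums gives $\chi(\sA,\sB)=-\chi(\sB,\sA\otimes\omega_{\sX_K})$.

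The key point is then that $\chi(\sB,\sA\otimes\omega_{\sX_K})=\chi(\sB,\sA)$ whenever one of $\sA,\sB$ is a torsion sheaf $\sT$ supported at finitely many points: tensoring $\sT$ (or the computation of $\Hom/\Ext$ into or out of $\sT$) by a line bundle which is trivial in a neighbourhood of the finite support does not change the groups, and $\omega_{\sX_K}$ is locally trivial away from the finitely many ramification points and the support of $\sT$; more robustly, $\chi(\sB,\sA)$ depends only on the classes $[\sA],[\sB]$ in $K_0(\sX_K)$ together with the bilinear Euler form, and for a torsion sheaf $\sT$ the relevant contribution is insensitive to twisting by any line bundle because, by d\'evissage, $[\sT]$ is a sum of classes of skyscraper-type sheaves $i_{x*}W$ and $\chi(i_{x*}W, \sL\otimes -)=\chi(i_{x*}W,-)$ for any line bundle $\sL$ on a curve (the degree term $\deg(\sL)\cdot 0$ vanishes). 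Applying this with $\sA=\sF$, $\sB=\sT$ gives $\chi(\sF,\sT)=-\chi(\sT,\sF)$, and with $\sA=\sB=\sT$ gives $\chi(\sT,\sT)=-\chi(\sT,\sT)$, hence $\chi(\sT,\sT)=0$ since we are in characteristic-zero coefficients (the group is an integer).

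Alternatively, and perhaps more in keeping with the paper's toolkit, one could bypass duality: write $\sT$ via a two-term resolution $0\to\sV_1\to\sV_0\to\sT\to 0$ by vector bundles on $\sX_K$ (possible since $\sX_K$ is smooth one-dimensional, compare the resolution of $G$ before Proposition \ref{p:global}), so $[\sT]=[\sV_0]-[\sV_1]$ has rank zero. Then $\chi(\sF,\sT)=\chi(\shom(\sF,\sV_0))-\chi(\shom(\sF,\sV_1))$ and $\chi(\sT,\sF)=\chi(\shom(\sV_0,\sF))-\chi(\shom(\sV_1,\sF))$, and Theorem \ref{t:stackRR} (or Corollary \ref{c:general}) expresses each $\chi$ of a $\shom$-bundle in terms of degree, rank times $(1-g)$, and parabolic correction terms; since $[\sT]$ has rank zero the $(1-g)$-terms cancel in pairs, the parabolic correction terms are symmetric under swapping the two arguments (Remark \ref{r:shom} gives a symmetric formula for $m_d$), and the degree terms satisfy $\deg\shom(\sA,\sB)=\rk(\sA)\deg(\sB)-\rk(\sB)\deg(\sA)$, which is antisymmetric — giving $\chi(\sF,\sT)=-\chi(\sT,\sF)$ directly, and $\chi(\sT,\sT)=0$ because every term is manifestly zero when both rank and the virtual degree vanish.

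The main obstacle I anticipate is the bookkeeping of the parabolic/root data under $\shom$ and under passing to $K_0$: one must make sure that the correction sums $\sum_d d(n_d-n_{d+1})/e$ appearing in Corollary \ref{c:general} genuinely cancel or are symmetric in the required way, and that a torsion sheaf supported at a ramification point contributes zero to these sums (which is true because such a sheaf has a vector-bundle resolution with matching parabolic multiplicities on the two terms, so the alternating sum of correction terms vanishes). Everything else is formal; the Serre-duality route makes the symmetry transparent at the cost of needing the dualizing sheaf of the root stack, while the resolution route keeps everything inside the Riemann--Roch computations already carried out in Section \ref{s:rr}.
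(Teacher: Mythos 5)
Your proposal takes a genuinely different route from the paper (whose proof is purely local: reduce to $X=\Spec(R)$ with $R$ a DVR, do the elementary length count for $\Hom$ and $\Ext^1$ of modules over a DVR, and then pass to $\bbZ/e$-graded modules over $R[t]/(t^e-s)$ as in Remark \ref{torsion}), but the step you yourself flagged as ``the main obstacle'' is a real gap and does not resolve the way you assert. On a root stack the Euler pairing against a torsion sheaf is \emph{not} insensitive to twisting by a line bundle, and the parabolic correction terms of Corollary \ref{c:general} do not cancel in the alternating sums you need, because at a stacky point the $\mu_e$-weights enter. Concretely, let $p$ be an orbifold point of order $e\ge 2$ with residue field $K$, let $\sT$ be the weight-zero skyscraper at $p$ (locally the graded module $A/tA$ over $A=R[t]/(t^e-s)$, i.e.\ the cokernel of the twisted tautological section $\sN^{-1}\to\sO$), and take $\sF=\sO_\sX$. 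Then $\Hom(\sO,\sT)=K$ and $\Ext^1(\sO,\sT)=0$, so $\chi(\sO,\sT)=1$; but $\sExt^1(\sT,\sO)$ is a length-one sheaf concentrated in a \emph{nontrivial} $\mu_e$-weight, so its invariants vanish and $\chi(\sT,\sO)=0$; likewise $\Hom(\sT,\sT)=K$ while $\Ext^1(\sT,\sT)$ sits in a nontrivial weight, so $\chi(\sT,\sT)=1$. This defeats both of your routes at the decisive point: Serre duality itself is fine, but $\omega_\sX\cong q^*\omega_X\otimes\sN^{e-1}$ has nontrivial weight at $p$ and indeed $\chi(\sT,\omega_\sX)=-1\ne 0=\chi(\sT,\sO)$, so the claimed twist-insensitivity fails; and in the resolution route the two terms $\sO$ and $\sN^{-1}$ have \emph{different} weights at $p$, their correction terms being $0$, $1/e$ and $(e-1)/e$, so the alternating sums of corrections do not cancel --- they account for exactly the discrepancy $\chi(\sO,\sT)+\chi(\sT,\sO)=1$. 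Your parenthetical claim that a torsion sheaf at a ramification point ``has a vector-bundle resolution with matching parabolic multiplicities on the two terms'' is false precisely when the torsion is unbalanced among the isotypic pieces, as in this example.

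Note also what this example says about the scope of the statement: the identities do hold when the torsion sheaf is balanced at the stacky points (equal multiplicities in all $\mu_e$-isotypic components, e.g.\ torsion pulled back from $X$, or supported away from the $p_i$), which is exactly the situation modelled by the ungraded DVR computation underlying the paper's proof; but for an arbitrary torsion summand supported at an orbifold point the quantities you are trying to match genuinely differ, so no rearrangement of your cancellations can close the gap. Any correct argument here must keep track of the weights of $\sT$ at the orbifold points --- which is what the paper's passage to graded modules over $R[t]/(t^e-s)$ is meant to do, and which neither the Serre-duality shortcut nor the ``symmetric correction terms'' claim accomplishes. The rank and degree bookkeeping in your second route is fine; it is only the weight/parabolic part that fails, and it fails for an essential reason, not a notational one.
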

\begin{proof}
	The question is local on the base so we may assume that our curve
	is $\Spec(R)$ where $R$ is a DVR. Let $t$ be a uniformizing parameter of $R$.
	For modules over a DVR the result is true by elementary calculations.
	The result for the root stack follows from the previous remark by
	passage to graded modules.
\end{proof}

\begin{lemma}\label{l:coherent}For a coherent sheaf $\sE$ over $\sX_{K}$, 
	$$ \chi(\sE,\sE) = (1-g)r^{2} - \sum_{i=1}^{l}  \dim\Flag_{\bn_i}( \sE|_{p_i})  $$
\end{lemma}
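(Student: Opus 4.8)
The plan is to reduce the statement to the vector bundle case, which is handled by Proposition \ref{p:shom} and Corollary \ref{c:general}. First I would use Remark \ref{torsion} to write $\sE = \sF\oplus\sT$ with $\sF$ a vector bundle of rank $r$ (and parabolic datum $\bn=(\bn_1,\dots,\bn_l)$ at the $p_i$) and $\sT$ a torsion sheaf. Since $\Hom(-,-)$ and $\Ext(-,-)$ are additive in each argument with respect to finite direct sums, the pairing $\chi(-,-)$ is biadditive, so
\[
\chi(\sE,\sE) = \chi(\sF,\sF) + \chi(\sF,\sT) + \chi(\sT,\sF) + \chi(\sT,\sT).
\]
By Lemma \ref{alt} the cross terms cancel, since $\chi(\sF,\sT) = -\chi(\sT,\sF)$, and $\chi(\sT,\sT) = 0$; this leaves $\chi(\sE,\sE) = \chi(\sF,\sF)$. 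As $\sF$ is a vector bundle, $\chi(\sF,\sF) = \chi(\sHom(\sF,\sF))$, so the torsion part contributes nothing. This reduction is the conceptual core of the argument.

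Next I would compute $\chi(\sHom(\sF,\sF))$. Let $(m_{i,0},\dots,m_{i,e_i})$ be the endomorphism parabolic datum at $p_i$ described in Remark \ref{r:shom}. Applying Corollary \ref{c:general} over the possibly non-closed field $K$, together with $\deg(\sHom(\sF,\sF)) = 0$ by the splitting principle and $\rk(\sHom(\sF,\sF)) = r^2$, gives
\[
\chi(\sHom(\sF,\sF)) = (1-g)r^2 - \sum_{i=1}^l \deg(p_i)\sum_{d=0}^{e_i-1}\frac{d(m_{i,d}-m_{i,d+1})}{e_i}.
\]
To finish I would invoke the purely combinatorial identity already carried out in the proof of Proposition \ref{p:shom}, namely that $\sum_{d=0}^{e_i-1}\frac{d(m_{i,d}-m_{i,d+1})}{e_i}$ equals $\sum_j n_{i,j}(n_{i,j-1}-n_{i,j})$, the dimension of the flag variety of type $\bn_i$; this is an identity among the integers $n_{i,j}$ and does not refer to the base field. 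Multiplying by $\deg(p_i)$ turns each term into the dimension over $K$ of the corresponding Weil restriction, which by Remark \ref{torsion} (the parabolic datum of $\sE$ is by definition that of $\sF$) and Proposition \ref{p:flat} (the datum is stable under the base changes involved) is $\dim\Flag_{\bn_i}(\sE|_{p_i})$. Combining the two displays yields $\chi(\sE,\sE) = (1-g)r^2 - \sum_{i=1}^l \dim\Flag_{\bn_i}(\sE|_{p_i})$.

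I expect the main obstacle to be bookkeeping rather than anything substantive: matching the arithmetic factor $\deg(p_i)$ produced by Corollary \ref{c:general} with the $K$-dimension of the Weil-restricted flag variety attached to the (possibly non-rational) point $p_i$, and being careful that $\Flag_{\bn_i}(\sE|_{p_i})$ denotes that Weil restriction over the base. Since $X/k$ is smooth, $k(p_i)/k$ is separable and $\sE|_{p_i}$ is free over the \'etale algebra $k(p_i)\otimes_k K$, so this is routine. One could alternatively sidestep it by first base-changing to $\bar K$, using flat base change to see that $\chi(\sHom(\sF,\sF))$ is unchanged and Proposition \ref{p:flat} to see that the parabolic datum at the points lying over each $p_i$ is unchanged, and then quoting Proposition \ref{p:shom} directly over the algebraically closed field.
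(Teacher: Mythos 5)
Your proof is correct and follows essentially the same route as the paper: decompose $\sE=\sF\oplus\sT$ as in Remark \ref{torsion}, eliminate the cross terms and the torsion term with Lemma \ref{alt}, and evaluate $\chi(\sF,\sF)=\chi(\sHom(\sF,\sF))$ by the parabolic Riemann--Roch computation. The only difference is that you explicitly handle a non-algebraically-closed $K$ (via Corollary \ref{c:general} with the $\deg(p_i)$ factors, or by base change to $\bar{K}$), whereas the paper simply cites Proposition \ref{p:shom}, which is stated for $k=\bar{k}$; this is a minor but welcome refinement rather than a different argument.
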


\begin{proof}
	From \ref{torsion} it follows that $\chi(\sE,\sE) = \chi(\sF,\sF) + \chi(\sF, \sT) + \chi(\sT,\sF) + \chi(\sT,\sT)$. Now the lemma follows from \ref{p:shom} and \ref{alt}.
\end{proof}


\section{Essential dimension}

Let $\Fields/k$ be the category of field extensions of $k$. Consider a functor
$$F:\Fields\rightarrow\Sets.$$ Given a field extension $L/k$ and $x\in F(L)$ we say that
a subextension $K\subseteq L$ is a \emph{field of defintion of $x$}, and write 
$x\rightsquigarrow K$ if there is an $x'\in F(K)$ with $F(i)(x')=x$ where $i:K\hookrightarrow L$. We define the essential dimension of $x$ by
\[
\ed x = \inf_{x\rightsquigarrow K} \trdeg_k K,
\]
where the infimum is over all possible fields of definition. The \emph{essential dimension of} $F$ is defined to be
\[ \ed F = \sup_{\substack{ x\in F(L) \\ L\in\Fields_k}} \ed x \]

Let $p$ be a prime number.
We will consider the following variation obtained by throwing away prime to $p$ data, see \cite{merkurjev}.
In the above situtation, we say that $K$ is a \emph{$p$-field of definition of $x$} and write
$x\rightsquigarrow_p K$ if there are inclusions in $\Fields/k$
\begin{center}
	\begin{tikzcd}
	K \ar[r,hook] & K' \\  & L \ar[u,hookrightarrow] \
	\end{tikzcd}
\end{center}
with $K'/L$ a finite extension of degree prime to $p$ and $x'\in F(K)$ so that $x'$ and $x$ have the same image in $F(K')$.
The \emph{essential $p$-dimension of $x$} is then defined by
\[
\ed_p x = \inf_{x\rightsquigarrow_p K} \trdeg_k K,
\]
where the infimum is over all possible $p$-fields of definition. Finally, the \emph{essential $p$-dimension of} $F$ is defined to be
\[ \ed_p F = \sup_{\substack{ x\in F(L) \\ L\in\Fields_k}} \ed_p x \]

An algebraic stack produces such a functor $F$ by considering isomorphism classes of objects,
the essential dimension of which we refer to as the essential dimension of the stack.

\begin{example}\label{e:gerbe}
	Consider a $\Gm$-gerbe $\sG$ over a field $k$ of index $n=p_1^{a_1} \ldots p_\alpha^{a_\alpha}$ with $p_i$ prime. Then we have
	\[ \ed \sG\le \sum_{i=1}^\alpha (p_i^{a_i}-1), \]
	and this is conjecturally an equality. It is known to be an equality when $\alpha=1$ or $n=6$.  See \cite{ct}, 
	\cite{merkurjev} and \cite{involution}.
	The situation for the essential $p$-dimension is simpler,
	\[ \ed_p\sG = v_p(\ind\sG)-1.\]
	This is easily reduced to the previous case by remarking that prime to $p$-torsion in the Brauer group can be removed
	by passing to a prime to $p$ extension.
\end{example}
 
In this article we will be concerned with the essential ($p$-) dimension of $\Bun^{r,d}_{\bn,X}$ where $X$ is a smooth projective curve and the parabolic points are closed points.
We will recall some theorems from \cite{crelle} that will be useful in our context.

For now, lets work in a slightly more general context. Let $X$ be a projective 
scheme with a $k$-point and choose a collection $D_1$, $D_2$, \ldots $D_l$ of effective Cartier divisors and some positive integers $e_1,\ldots , e_l$
and form the corresponding root stack $q:\sX\rightarrow X$.

Consider a vector  bundle $\sF$ on the root stack defined over some field $l$ containing $k$.
Let $\fG(\sF)$ be the residual gerbe in $\Bun^{r,d}_{\sX}$ of a parabolic bundle $\sF$. The coarse moduli
space of this gerbe, is called the field of moduli of $\sF$ and is denoted $k(\sF)$.
There is a finite extension $L/k(\sF)$ so that $\fG(L)\ne\varnothing$, so 
we may find a parabolic vector bundle $\sF'$ that is a form of $\sF$ that is defined
over $L$. Following \cite{crelle}, we consider
$$
A:=\End(p_*\sF')
$$
where $p:\sX_L\rightarrow \sX_{k(\sF)}$ is the projection. This is just the algebra
of the ordinary vector bundle underlying $\sF'$ which preserves the parabolic 
structure.

One of the main results (stated for projective schemes) of \cite{crelle} is:

\begin{theorem}\label{t:main2}
	In the above situation, consider a field extension $K\supseteq k(\sF)$.
	Set $d=[L:k(\sF)]$.
	There is an equivalence of categories between 
the category of projective $A\otimes_{k(\sF)} K$-modules of rank $1/d$
and the groupoid $\fG(\sF)_K$.
\end{theorem}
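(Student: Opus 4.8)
The plan is to realize the residual gerbe $\fG(\sF)$ as a gerbe banded by the group of automorphisms of $\sF$ that preserve the parabolic structure, and then to recognize torsors under that group in terms of modules over the endomorphism algebra $A$. First I would observe that since $\sF$ is a parabolic vector bundle, its automorphism group scheme (over $k(\sF)$, after the faithfully flat descent implicit in passing from $L$ to $k(\sF)$) is $\Gm$-isomorphic to the unit group of the algebra $A' := \End(\sF)$ computed in the category of parabolic bundles; by \ref{r:shom} and the correspondence \ref{t:correspondence} this is a finite-dimensional associative algebra over $L$, and $A = \End(p_*\sF')$ is its descent datum transported to $k(\sF)$. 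The key structural input is that $A$ is an Azumaya-type twisted form: over $\bar k$ it becomes $\prod_i \Mat_{r_i}(\bar k)$ (a product of matrix algebras indexed by the isotypic pieces of $\sF$), so $A \otimes_{k(\sF)} \bar{k(\sF)}$ is a product of matrix algebras, and the residual gerbe is exactly the gerbe of trivializations of this form.

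The second step is the dictionary between the groupoid $\fG(\sF)_K$ and projective $A\otimes_{k(\sF)}K$-modules. An object of $\fG(\sF)_K$ is a parabolic bundle $\sF''$ over $\sX_K$ that becomes isomorphic to $\sF$ after a further extension; equivalently it is a torsor under $\underline{\Aut}(\sF) = (A')^\times$ in the fppf topology, twisted through the descent. I would send such an object to the $K$-vector space $\Hom_{\sX_K}^{\mathrm{par}}(\sF_K'', \sF_K')$ — more precisely to $p''_*\sHom(\sF'',\sF')$ where $p'': \sX_K \to \sX_{k(\sF)}$ — which is naturally a right module over $A = \End(p_*\sF')$ acting by precomposition, hence a left $A^{\mathrm{op}}$-module; one checks it is projective because étale-locally $\sF''\cong\sF'$ and then the Hom-space is a free rank-one module over the matrix algebra blocks, i.e. a progenerator summand, giving rank $1/d$ after accounting for the factor $d=[L:k(\sF)]$ coming from $p_*$. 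The quasi-inverse sends a projective $A\otimes K$-module $M$ of rank $1/d$ to $M\otimes_{A} p_*\sF'$, suitably descended and re-interpreted on $\sX_K$; Morita theory for the matrix-algebra blocks guarantees that these two constructions are mutually inverse equivalences of groupoids, and functoriality in $K$ is immediate.

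The main obstacle I anticipate is bookkeeping the rank normalization and the descent along $p:\sX_L\to\sX_{k(\sF)}$ simultaneously: one must verify that the "obvious" object $A$ itself, viewed as a rank-$1/d$ projective module over itself via $p_*$, corresponds to $\sF$ (not to $\sF^{\oplus d}$ or a Weil-restricted variant), and that the rank is measured correctly block-by-block when $A$ is a nontrivial product of twisted matrix algebras. This is exactly the point handled in \cite{crelle} for ordinary bundles; here the only new ingredient is that the internal Hom and the push-forward $q_*$ on the root stack are exact (\ref{l:exact}, \ref{t:correspondence}, \ref{r:shom}), so the parabolic $\End$ behaves formally like the vector-bundle $\End$ and the argument of \cite{crelle} transfers essentially verbatim. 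I would therefore structure the proof as: (1) identify $\fG(\sF)$ with the gerbe of trivializations of the form $A$; (2) set up the Hom-functor and its quasi-inverse; (3) check mutual inverseness étale-locally via Morita; (4) track ranks to pin down the normalization $1/d$.
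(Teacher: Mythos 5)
Your core construction is the same as the paper's: the paper also defines the functor by $\sE\mapsto \Hom(p_*(\sF)\otimes K,\sE)$, takes $M\mapsto p_*\sF'\otimes_{A}M$ as quasi-inverse, verifies projectivity and the rank $1/d$ by base changing to a field over which $\sE$ and $\sF$ become isomorphic (using flat base change, \ref{p:flat}), and then defers the verification that these are mutually inverse to the argument of \cite[5.3]{crelle}. The only cosmetic difference is variance: you put the reference object in the target, $\sF''\mapsto\Hom(\sF''_K,\sF'_K)$, which is contravariant (harmless on a groupoid, but then you must be consistent about $A$ versus $A^{\mathrm{op}}$), whereas the paper uses the covariant $\Hom$ out of $p_*\sF'$.

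However, your stated ``key structural input'' --- that $A\otimes_{k(\sF)}\overline{k(\sF)}$ is a product of matrix algebras $\prod_i\Mat_{r_i}$ and that $\fG(\sF)$ is the gerbe of trivializations of this Azumaya-type form --- is false in general and is not how the paper proceeds. The theorem is stated for an arbitrary parabolic bundle $\sF$, and $\End(\sF)$ typically has a nontrivial Jacobson radical even over an algebraically closed field (an indecomposable non-simple bundle has local endomorphism ring with nilpotents; already $\sO\oplus\sO(-1)$ has a triangular, non-semisimple endomorphism algebra). Consequently the band of $\fG(\sF)$ is the full unit group $\underline{\Aut}(\sF)$, which has a unipotent part, not $\Gm$, and ``Morita for the matrix-algebra blocks'' is not available as stated. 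Fortunately this claim is not needed: once $\sF''\cong\sF'$ over a suitable extension, $\Hom(\sF'',\sF')$ becomes free of rank one over the full algebra $\End(\sF')$, whatever its structure, which is all the projectivity/rank bookkeeping requires; the semisimplification enters only later in the paper, via \ref{p:summary}(1), when projective modules over $A$ are compared with those over $A/\mathrm{nil}$ in the proof of \ref{p:rank}. So your proof would go through once step (1) is reformulated purely in terms of forms of $\sF$ and torsors under $\underline{\Aut}(\sF)$, without the semisimplicity assertion.
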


\begin{proof}
	As stated above, this is \cite[5.3]{crelle}, and we assert that the proof
	goes through in our more general context of root stacks. We describe here
	quickly the functors in each direction. To produce a module from a point of
	$\fG(F)_K$, say $\sE$ consider the module 
	$$ M = \Hom(p_*(\sF)\otimes K, \sE).$$
	To see that $M$ is projective of the correct rank, consider a field $L$ 
	containing $l$ and $K$, so that after base change to $L$, $\sE$ and $\sF$
	are isomorphic. Observe that
	\begin{eqnarray*}
M \otimes_K L &\cong &	 \Hom(p_*(\sF)\otimes K, \sE) \otimes_K L \\
&\cong & p_*\sHom(p_*(\sF)\otimes K, \sE) \otimes_K L \\
&\cong & p_{L,*} \sHom(p_*(\sF)\otimes L, \sE\otimes L)  \\
&\cong &  \Hom(p_*(\sF)\otimes L, \sF\otimes L),  \\
	\end{eqnarray*}
using \ref{p:flat}. It follows that $M$ is projective of the correct rank.

In the opposite direction, given a module $M$ over $A_L$, one considers the
sheaf 
$$\pi_*\sF_L\otimes_{A_L} M.$$
As per the argument in \cite[5.3]{crelle} these functors give the required equivalence.
\end{proof}

Armed with this result, we are reduced to studying the essential dimension  of the functor of projective modules over
a finite dimensional algebra. We recall here some pertinent definitions and results from \cite{crelle}. All proofs and further
details can be found there. We fix for now a finite dimensional (noncommutative) $k$-algebra $A$. Let $j(A)$ be its Jacobson radical.
Given a nonnegative rational number $r$, we denote by $\Mod_{A,r}$ the category of projective modules over $A$ of rank $r$. Recall that $r$ is defined by $dr=m$ where $P$ is a projective
module with $P^d=A^m$. The functor
\[
\Mod_{A,r}:\Fields_k\rightarrow \Sets
\]
that sends a field to isomorphism classes of projective $A\otimes_k K$-modules of rank $r$,
is a determination functor, that is
\[
\Mod_{A,r}=\begin{cases}
\{*\} & \text{a singleton} \\ \emptyset. & \end{cases}
\]

\begin{proposition}\label{p:summary}
	\begin{enumerate}
		\item If $\fn$ is a nilpotent two-sided ideal of $A$. Then $\Mod_{A,r} = \Mod_{A/\fn,r}$.
		\item If $A\cong B_1\times B_2$ then \[\Mod_{B_1,r}\times \Mod_{B_2,r}\cong \Mod_{A,r}.\]
		\item For coprime integers $n$ and $d$ we have
		\[\Mod_{A,1/d}\cong \Mod_{A,n/d}.\]
	\end{enumerate}
\end{proposition}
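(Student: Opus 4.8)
The plan is to prove the three statements in turn, each reducing to a standard structural fact about projective modules over finite-dimensional algebras, combined with the observation that $\Mod_{A,r}$ is a determination functor (so the only content is ``nonempty or empty,'' and we need only track when modules of the prescribed rank exist and whether any two such are isomorphic after a common field extension).

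For part (1), I would use the fact that lifting idempotents modulo a nilpotent ideal is an equivalence: the reduction functor $P\mapsto P/\fn P$ from projective $A$-modules to projective $A/\fn$-modules is essentially surjective and induces a bijection on isomorphism classes, and it preserves rank since $\dim_k(P/\fn P) = \dim_k P - \dim_k \fn P$ and the ranks are normalized so that $A\mapsto A/\fn$ has rank $1$ on both sides. One must check the normalization is compatible: if $P^d \cong A^m$ then $(P/\fn P)^d \cong (A/\fn)^m$, so the rational number $r = m/d$ is the same computed either side; and conversely a projective $A/\fn$-module of rank $r$ lifts to one of rank $r$ over $A$. This identification is natural in the field $K$ (base change commutes with reduction mod $\fn$ since $\fn$ is a $k$-submodule and $A\otimes_k K$ has radical $\fn\otimes_k K$ when... actually one just needs $\fn\otimes_k K$ nilpotent, which is immediate). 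For part (2), a projective module over $B_1\times B_2$ decomposes canonically as $P_1\times P_2$ with $P_i$ projective over $B_i$, the rank of $P$ equals the common value forced by $dr = $ (the integer exponent), and one checks that $\rk P = r$ forces $\rk P_i = r$ for both $i$ because $A^m = (B_1^m)\times(B_2^m)$; this decomposition is functorial in $K$, giving the claimed product decomposition of functors.

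Part (3) is the one with genuine content. Here I would argue as follows: $\Mod_{A,1/d}$ is nonempty iff there exists a projective $A$-module $P$ with $P^d \cong A$ (over some field extension), and then any two such become isomorphic after a further extension because the functor is a determination functor. I claim $\Mod_{A,n/d}$ is nonempty over exactly the same fields: given $P$ with $P^d\cong A\otimes K$, the module $P^n$ satisfies $(P^n)^d \cong A^n\otimes K$, so $P^n$ has rank $n/d$; conversely given $Q$ with $Q^d\cong A^n\otimes K$, I want to extract a rank-$1/d$ module, and this is where coprimality of $n$ and $d$ enters — one uses that $Q$, being a form of $P^n$ over an extension, together with Bézout $an + bd = 1$, lets one build $Q^a\otimes(\text{projective rank }1)^b$ or more precisely manipulate $Q^a$ against powers of $A$ to land in rank $1/d$. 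The cleanest route: since $\gcd(n,d)=1$, the rank map identifies the monoid of (isomorphism classes of, over a fixed algebraically closed field) projective modules up to ``stable'' equivalence with a sub-semigroup of $\QQ_{\ge0}$, and $n/d$ and $1/d$ generate the same localization data; alternatively invoke \cite{crelle} directly since this is their Proposition. The main obstacle I anticipate is making precise the passage from a rank-$n/d$ module back to a rank-$1/d$ module using only the determination-functor property plus coprimality, i.e.\ verifying that nonemptiness of $\Mod_{A,n/d}$ over $K$ really does force nonemptiness of $\Mod_{A,1/d}$ over $K$ (not just over an extension) — this requires a small descent argument or a direct idempotent-theoretic construction, and is the step I would expect to spend the most care on. Since the statement is attributed to \cite{crelle}, I would in fact cite the corresponding result there and only sketch the argument, noting that the proofs carry over verbatim as the algebra $A$ here is an arbitrary finite-dimensional $k$-algebra.
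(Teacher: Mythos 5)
Your sketches for parts (1) and (2) are correct and are the standard arguments (idempotent lifting modulo the nilpotent ideal $\fn\otimes_k K$, and the canonical decomposition $P=e_1P\times e_2P$ over a product, with the rank bookkeeping you indicate); note that the paper itself gives no argument at all and simply cites \cite[3.2, 3.3, 3.5]{crelle}, so your fallback of citing the reference is literally the paper's proof. The genuine issue is part (3), where you correctly isolate the converse direction (nonemptiness of $\Mod_{A,n/d}(K)$ should force nonemptiness of $\Mod_{A,1/d}(K)$ over the \emph{same} $K$) but do not close it. The B\'ezout manipulation you propose, forming something like $Q^{a}\otimes(\text{rank }1)^{b}$, does not make sense as written: $A\otimes_k K$ is noncommutative and there is no tensor operation on left $A_K$-modules that adds ranks, so that route fails. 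Moreover, no descent argument, no ``stable equivalence'', and no passage to an algebraically closed field is needed.

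The clean argument is Krull--Schmidt over the finite-dimensional algebra $A_K:=A\otimes_k K$ itself: write $A_K\cong\bigoplus_i P_i^{a_i}$ with the $P_i$ pairwise non-isomorphic indecomposable projectives. Any finitely generated projective $Q\cong\bigoplus_i P_i^{b_i}$ satisfies $Q^{d'}\cong A_K^{m'}$ if and only if $b_i d'=a_i m'$ for all $i$; hence a projective of rank $r$ exists over $K$ if and only if $ra_i\in\ZZ$ for every $i$, and it is then unique, namely $\bigoplus_i P_i^{\,ra_i}$ (this uniqueness also re-proves the determination-functor property you invoke). Now rank $n/d$ is realizable over $K$ iff $d\mid na_i$ for all $i$, and since $\gcd(n,d)=1$ this is equivalent to $d\mid a_i$ for all $i$, i.e.\ to realizability of rank $1/d$ over $K$. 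This settles, field by field and with no extension needed, exactly the step you flagged as the one requiring care, and it is the content of the cited result in \cite{crelle}.
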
	

\begin{proof}
	See \cite[3.2,3.3,3.5]{crelle}.
\end{proof}

\begin{proposition}\label{p:primesplit}
	Let $l/k$ be a finite prime to $p$ extension. Consider the functors
	\begin{eqnarray*}
		\Mod_{A,r}:\Fields/k\rightarrow \Sets \\
		\Mod_{A_l,r}:\Fields/l\rightarrow \Sets. \\
	\end{eqnarray*}
The $\ed_p(\Mod_{A,r})=\ed_p(\Mod_{A_l,r})$.
\end{proposition}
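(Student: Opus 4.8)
The plan is to observe that this statement really has nothing to do with $A$: for a field $K \supseteq l$ one has $A_l \otimes_l K \cong A \otimes_k K$, so the functor $\Mod_{A_l,r}$ is exactly the restriction of $\Mod_{A,r}$ to the subcategory $\Fields/l \subseteq \Fields/k$. Hence the proposition is the general fact that essential $p$-dimension is insensitive to a finite prime-to-$p$ extension of the base field, and in the final write-up I would most likely simply invoke this from \cite{merkurjev}. For completeness, here is how I would argue it directly. Write $F = \Mod_{A,r}$ over $k$, write $F_l$ for its restriction over $l$, and write $\ed_p^k$, $\ed_p^l$ for essential $p$-dimension relative to the two base fields. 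The two inequalities are proved by manipulating the $\rightsquigarrow_p$ relation from the definition.

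For $\ed_p(F_l) \le \ed_p(F)$: take $L \supseteq l$ and $x \in F(L)$. Over $k$ there is a $p$-field of definition $K$ of $x$ with $\trdeg_k K \le \ed_p(F)$, which by definition comes with a field $K''$ into which both $K$ and $L$ embed, with $[K'' : L]$ finite and prime to $p$, and an $x' \in F(K)$ having the same image as $x$ in $F(K'')$. Fix the embedding $l \hookrightarrow L \hookrightarrow K''$ and let $M \subseteq K''$ be the compositum of $K$ and $l$. Then $M \supseteq l$, it still embeds into $K''$ with $[K'':L]$ prime to $p$, the image of $x'$ defines $x$ over $M$, and $\trdeg_l M = \trdeg_k M = \trdeg_k K$ since $M/K$ is algebraic. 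Thus $\ed_p^l(x) \le \ed_p(F)$, and taking the supremum over $x$ gives the inequality.

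For the reverse inequality $\ed_p(F) \le \ed_p(F_l)$, the plan is to pass to $L \otimes_k l$. Take $L \supseteq k$ and $x \in F(L)$. The $L$-algebra $L \otimes_k l$ is Artinian; since $\dim_L(L \otimes_k l) = [l:k]$ is prime to $p$, at least one local factor $R$ has $\dim_L R$ prime to $p$, and then its residue field $L'$ has $[L':L]$ prime to $p$ (it divides $\dim_L R$); moreover $L'$ contains $l$ because $l$ is a field mapping to $L'$. Let $x' \in F_l(L')$ be the image of $x$. Over $l$ there is a $p$-field of definition $K'$ of $x'$ with $\trdeg_l K' \le \ed_p(F_l)$, witnessed by a field $K''$ with $K', L' \hookrightarrow K''$, $[K'':L']$ prime to $p$, and a lift of $x'$ to $F(K')$ agreeing with $x'$ in $F(K'')$. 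Viewed over $k$, the same $K'$ is a $p$-field of definition of $x$: $K'$ and $L$ embed into $K''$, $[K'':L] = [K'':L']\,[L':L]$ is prime to $p$, the lift agrees with $x$ in $F(K'')$ because $x'$ does, and $\trdeg_k K' = \trdeg_l K'$. Taking the supremum over $x$ finishes the proof.

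The content here is entirely the combinatorics of composita of fields: the things to watch are that the relevant degrees stay prime to $p$ after forming composita (which works because degrees multiply) and that transcendence degree is unchanged by the finite extension $l/k$; no property of $\Mod_{A,r}$ beyond functoriality is used, and in particular nothing special about detection functors. So the ``main obstacle'' is just careful bookkeeping with the definition of $\rightsquigarrow_p$, and in practice I would cite the corresponding statement from \cite{merkurjev} rather than reproduce this argument in full.
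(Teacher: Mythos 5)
Your argument is correct, and its overall strategy --- view $\Mod_{A_l,r}$ as the restriction of $\Mod_{A,r}$ to $\Fields/l$, then prove the two inequalities directly from the definition of $\rightsquigarrow_p$ using composita and the invariance of transcendence degree under the finite extension $l/k$ --- is the same as the paper's. Where you differ is in how much is actually verified, and the difference matters. The paper declares the inequality $\ed_p(\Mod_{A,r})\ge\ed_p(\Mod_{A_l,r})$ to be clear and, for the remaining direction, replaces a witness $K\hookrightarrow K'\hookleftarrow L$ for $M$ by the composita $lK\hookrightarrow lK'\hookleftarrow lL$ for $M\otimes_L lL$, recording only that $\trdeg_l lK=\trdeg_k K$; it never checks that $[lK':lL]$ stays prime to $p$, and this is not automatic, since forming composita can introduce new prime factors (for instance $\mathbb{Q}(\sqrt[3]{2})$ and $\mathbb{Q}(\omega\sqrt[3]{2})$ each have degree $3$ over $\mathbb{Q}$, yet their compositum has degree $2$ over either of them). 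Your proof sidesteps this twice: for $\ed_p(\Mod_{A_l,r})\le\ed_p(\Mod_{A,r})$ you form the compositum of $K$ with $l$ inside the already given witness field $K''$, so the witness and its prime-to-$p$ degree over $L$ are untouched; for the reverse inequality you pass to the residue field $L'$ of a prime-to-$p$-dimensional local factor of $L\otimes_k l$ --- the one place where the hypothesis $p\nmid[l:k]$ is used --- and then the relevant degrees simply multiply, $[K'':L]=[K'':L'][L':L]$. So your write-up supplies exactly the bookkeeping that the paper's terse proof leaves implicit (and, in the compositum step, states in a form that needs repair), and it buys a cleaner separation of where the prime-to-$p$ hypothesis enters; citing the corresponding base-change invariance from Merkurjev's survey, as you propose, is also a perfectly acceptable alternative.
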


\begin{proof}
	The inequality $\ed_p(\Mod_{A,r})\ge \ed_p(\Mod_{A_l,r})$ is clear. Take $M\in \Mod_{A,r}(L)$. If\begin{center}
		\begin{tikzcd}
		K \ar[r,hook] & K' \\  & L \ar[u,hookrightarrow] \
		\end{tikzcd}
	\end{center}
is a $p$-field of definition for $M$ then 
\begin{center}
	\begin{tikzcd}
	lK \ar[r,hook] & lK' \\  & lL \ar[u,hookrightarrow] \
	\end{tikzcd}
\end{center}
is a $p$-field of definition for $M\otimes_L lL$ where $lL$ is a compositum. Further, 
\[\trdeg_l lK = \trdeg_k K. \]
\end{proof}

\begin{proposition}\label{p:prime} Fix a prime $p$.
	Let $\sE$ be an indecomposable vector bundle on $\sX_K$ and suppose $\sX$ has a $k$-point. Then 
	\[
	\dim_k \End(\sE)/j(\E))\le \rk(\sE)\quad \text{and}\quad
	v_p(\dim_k \End(E)/j(E))\le v_p(\rk(E)),
	\]	
	where $j(\sE)$ is the Jacobson radical of $\End(\sE)$.
\end{proposition}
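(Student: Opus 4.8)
The plan is to use the $k$-point of $\sX$ to embed a quotient of $\End(\sE)$ into a matrix algebra and then extract both bounds from elementary module theory over a division ring. To begin, I would record the standard structural facts: since $\sX$ is proper over $k$, the base change $\sX_K$ is proper over $K$, so $R:=\End_{\sX_K}(\sE)$ is a finite-dimensional $K$-algebra; and since $\sE$ is indecomposable, $R$ has no nontrivial idempotents, hence is local, with nilpotent Jacobson radical $j(\sE)$ and division-algebra quotient $D:=R/j(\sE)$. I would then reduce the whole proposition to the single assertion that $\dim_K D$ divides $\rk(\sE)$: once that is established, $\dim_K D\ge 1$ forces $\dim_K D\le\rk(\sE)$ and divisibility forces $v_p(\dim_K D)\le v_p(\rk(\sE))$, which are precisely the two displayed inequalities (with the dimension taken over $K$).

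To prove the divisibility, let $x_0:\Spec k\to\sX$ be the given $k$-point; base change gives a $K$-point $x:\Spec K\to\sX_K$, and pulling $\sE$ back along $x$ all the way down to $\Spec K$ (forgetting any automorphisms of $x$) yields a $K$-algebra homomorphism
\[
\rho\colon R=\End_{\sX_K}(\sE)\longrightarrow \End_K(x^*\sE)\cong M_r(K),\qquad r:=\rk(\sE),
\]
nonzero because $\rho(\id)=\id$. The step I expect to be the only non-formal point is the inclusion $\ker\rho\subseteq j(\sE)$: if $\phi\in R$ is not in $j(\sE)$ then $\phi$ is invertible, $R$ being local, so $x^*\phi$ is invertible and in particular nonzero, whence $\phi\notin\ker\rho$. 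Granting this, $B:=R/\ker\rho$ is a nonzero $K$-subalgebra of $M_r(K)$; the image $\overline{j}$ of $j(\sE)$ in $B$ is a nilpotent two-sided ideal, and $B/\overline{j}=R/j(\sE)=D$ exactly because $\ker\rho\subseteq j(\sE)$.

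The remaining step is a short filtration argument on $V:=x^*\sE\cong K^r$, regarded as a $B$-module via $B\hookrightarrow M_r(K)$. Since $\overline{j}$ is nilpotent, the chain $V\supseteq\overline{j}V\supseteq\overline{j}^{2}V\supseteq\cdots$ reaches $0$ after finitely many steps, and each successive quotient $\overline{j}^{i}V/\overline{j}^{i+1}V$ is annihilated by $\overline{j}$, hence is a module over the division algebra $B/\overline{j}=D$ and therefore free over $D$; so its $K$-dimension is a multiple of $\dim_K D$. Adding up over $i$ shows that $r=\dim_K V$ is a multiple of $\dim_K D$, which completes the argument. Besides the inclusion $\ker\rho\subseteq j(\sE)$, the only points that need a word of justification are the finiteness and locality of $\End(\sE)$ and the identification $x^*\sE\cong K^r$ (immediate since $\sE$ is a vector bundle of rank $r$); I do not foresee any genuine obstacle beyond assembling these pieces.
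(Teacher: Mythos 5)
Your proof is correct and takes essentially the same route as the paper's (which simply invokes \cite[Lemma 4.2]{crelle}): pass to the division ring $D=\End(\sE)/j(\sE)$ of the local algebra $\End(\sE)$, let it act via the fibre of $\sE$ at the rational point, and use freeness of modules over a division ring to get $\dim_K D\mid \rk(\sE)$, which yields both inequalities. Your filtration of the fibre by powers of the radical merely makes precise the paper's shorthand that ``the fibre is a module over the division ring.''
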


\begin{proof}
	This is \cite[Lemma 4.2]{crelle} in the first case. The second case is similar, so let's recall the proof.
The ring	$\End(\sE)/j(\End(\sE)$ is a division ring and the fiber over a rational point of $X$ is a module over it. The result follows from the fact that every module over a division ring is free.
\end{proof}

Finally we need the following result from \cite{crelle}.

\begin{proposition}\label{p:ed-dimension}
	Let $A$ be division ring with centre $k$. Then
	$$
	\ed_p(\Mod_{A,r}) \le \ed (\Mod_{A,r}) < r\dim_k (A).
	$$
\end{proposition}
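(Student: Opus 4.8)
The statement has two assertions: the inequality $\ed_p(\Mod_{A,r}) \le \ed(\Mod_{A,r})$, which is immediate from the definitions (a field of definition is in particular a $p$-field of definition, so the infimum defining $\ed_p$ is over a larger family), and the substantive bound $\ed(\Mod_{A,r}) < r\dim_k(A)$. The plan is to focus on the second inequality. Since $A$ is a division ring with centre $k$, write $\dim_k(A) = n^2$ for some integer $n$ (a central division algebra has square dimension over its centre), and recall that $r = a/d$ in lowest terms means a projective $A\otimes_k K$-module $P$ of rank $r$ satisfies $P^{\oplus d} \cong (A\otimes_k K)^{\oplus a}$. By Proposition \ref{p:summary}(3) we may replace $r$ by $1/d$, so it suffices to bound $\ed(\Mod_{A,1/d})$.

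The key idea is to realize $\Mod_{A,1/d}$ as a quotient-type functor and read off the essential dimension from a versal object. A projective $A\otimes_k K$-module of rank $1/d$ is a direct summand $P$ of $(A\otimes_k K)^{\oplus 1}$ such that $P^{\oplus d}\cong A\otimes_k K$; equivalently, since over a division algebra every finitely generated module is free, such $P$ exists over $K$ exactly when $A\otimes_k K$ admits a suitable decomposition, and the set of isomorphism classes is governed by the reduced norm / the group $\mathrm{PGL}_1(A)$. More concretely, $\Mod_{A,1/d}$ is the functor whose objects, over $K$, are right ideals $I\subseteq A\otimes_k K$ with $\dim_K I = \dim_k(A)/d = n^2/d$, modulo isomorphism — this is a homogeneous space under the unit group $(A\otimes_k K)^\times$. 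I would identify this with an open subscheme of a Grassmannian-type variety, or directly with a twisted form of a projective space / Severi–Brauer-like variety, whose dimension is at most $\dim_k(A)/d \cdot (\text{something}) $; the cleanest route is: the functor of right ideals of fixed dimension $m = n^2/d$ in $A$ is represented by a closed subscheme $Y$ of the Grassmannian $\mathrm{Gr}(m, A)$, and $\ed(\Mod_{A,1/d})$ is bounded by $\dim Y$ minus the dimension of the acting group $\mathrm{GL}_1(A)$-orbit, but it is simpler still to use that any object over any $K$ is obtained by base change from the generic point of $Y$, giving $\ed(\Mod_{A,1/d}) \le \dim Y \le \dim \mathrm{Gr}(m,n^2) = m(n^2-m) = \frac{n^2}{d}\cdot\frac{n^2(d-1)}{d}$. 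This is too weak; the right normalization is to quotient by the group action, which brings the bound down to strictly less than $n^2 = \dim_k A$ for $d\ge 2$, and for $d=1$ the functor $\Mod_{A,1}$ is the one-point functor (only $A$ itself), so $\ed = 0 < \dim_k A$.

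Concretely, I would argue as follows. For $d = 1$: $\Mod_{A,1}(K) = \{A\otimes_k K\}$ is a singleton for all $K$ (it is nonempty), so $\ed = 0 < \dim_k A$, done. For $d\ge 2$: realize the functor via the scheme $\mathrm{SB}_d(A)$ of right ideals of reduced dimension... — more precisely, use that $\Mod_{A,1/d}$ is, by faithfully flat descent, the functor of forms of a fixed module, hence classified by $H^1(-, \underline{\mathrm{Aut}})$ where $\underline{\mathrm{Aut}}$ is the automorphism group scheme of the module $A^{\oplus 1}$ viewed as having "rank $1/d$", which is an inner form of $\mathrm{GL}_{a}$ for the appropriate $a$; then apply the standard bound $\ed(H^1(-,G)) \le \dim G$ is false in general, but for the specific $G$ here (a form of $\mathrm{GL}_1$ over the division algebra, i.e. $G = \mathrm{GL}_1(A)$ up to isogeny) one has $\ed \le \dim G - \dim Z(G)$ or similar. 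The genuinely clean statement, and the one I expect the authors use, is: a versal torsor for $\mathrm{GL}_1(A)/\Gm$ (or the relevant group) lives over a rational variety of dimension $< \dim_k A$, because $\mathrm{GL}_1(A)$ is a rational variety of dimension $\dim_k A$ and passing to the projectivization / taking the quotient by scalars strictly drops the dimension.

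\textbf{The main obstacle.} The crux is getting the \emph{strict} inequality and the exact constant $r\dim_k A$: one must pin down precisely which group scheme $G$ governs $\Mod_{A,r}$ and show $\ed(\Mod_{A,r}) < r\dim_k A$ rather than just $\le$. I expect the honest proof reduces, via Proposition \ref{p:summary}(3), to $r = 1/d$ and then uses a concrete versal family: the module $A\otimes_k K(Y)$ over the function field of an affine variety $Y$ of dimension exactly $\dim_k A - 1$ (coming from $\mathrm{GL}_1(A)$ modulo scalars, which is rational of that dimension) and argues every rank-$1/d$ module is a specialization. The inequality is then strict because the dimension of the parameter variety is $\dim_k A - 1 < r \dim_k A$ whenever $r \ge 1$, and $r \ge 1$ may be assumed after clearing denominators via \ref{p:summary}(3); for $r < 1$ one uses $\Mod_{A,1/d} \cong \Mod_{A,n/d}$ with $n/d \ge 1$. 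I would cite \cite{crelle} for the detailed construction of the versal object, since the claim is attributed there, and present the above as the conceptual skeleton. The first inequality $\ed_p \le \ed$ needs only the one-line remark that every field of definition is a $p$-field of definition.
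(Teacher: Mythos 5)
Your first inequality is fine, and the paper itself dispatches the substantive bound by citing \cite[3.7]{crelle}; but your ``conceptual skeleton'' does not reconstruct that argument, and as written it has a real gap exactly where you flag ``the main obstacle.'' The functor $\Mod_{A,r}$ is a determination functor: over any $K$ a projective $A\otimes_k K$-module of given rank is unique up to isomorphism, so there is nothing to classify by $H^1(-,\underline{\mathrm{Aut}})$ and no versal torsor for $\mathrm{GL}_1(A)/\Gm$ is relevant --- the only question is \emph{existence}, i.e.\ detection. The correct route, which you start but abandon, is: writing $\dim_k A=n^2$ and $r=m/d$ in lowest terms, $\Mod_{A,r}(K)\neq\varnothing$ iff $\ind(A\otimes_k K)$ divides $n/d$, iff the generalized Severi--Brauer variety $\SB_{n/d}(A)$ of right ideals of $K$-dimension $n^2/d$ has a $K$-point. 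Since this is a projective variety, any $K$-point factors through the residue field of its image, so $\ed(\Mod_{A,r})\le\dim \SB_{n/d}(A)$; and because $\SB_{n/d}(A)$ is a twisted form of $\mathrm{Gr}(n/d,\,n)$ (not a subvariety of $\mathrm{Gr}(n^2/d,\,n^2)$ whose dimension you would then need to estimate), its dimension is exactly $(n/d)(n-n/d)=n^2/d-n^2/d^2<n^2/d\le r\dim_k A$. You only bound $\dim Y$ by the ambient Grassmannian, concede the result is too weak, and the proposed repair (``quotient by the group action'') is never made precise and is not how the bound works.

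The second, independent gap is the strictness step at the end. Your argument gives at best $\ed\le \dim_k A-1<r\dim_k A$ \emph{when} $r\ge 1$, and you handle $r<1$ by invoking \ref{p:summary}(3) to replace $1/d$ by $n/d\ge 1$. That reduction goes the wrong way: the isomorphism $\Mod_{A,1/d}\cong\Mod_{A,n/d}$ preserves the essential dimension but enlarges the target bound from $(1/d)\dim_k A$ to $(n/d)\dim_k A$, so proving the inequality for $n/d\ge1$ says nothing about $\ed<(1/d)\dim_k A$. The case $r<1$ is precisely the one used later (in the proof of \ref{p:rank} the proposition is applied with rank $n_i/d$ for large $d$), so it cannot be discarded; it is handled only by the sharper twisted-Grassmannian count above, which is the content of \cite[3.7]{crelle} that the paper quotes.
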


\begin{proof}
	The assertion about essential $p$-dimension is trivial. The non-trivial inequality is
	by \cite[3.7]{crelle}.
\end{proof}

\begin{proposition}\label{p:rank} 
Suppose that $\sE$ is vector bundle on $\sX_K$ of rank $r$. 
Then
\[
\ed_{k(\sE)}(\fG(\sE)) \le r-1 \qquad\ed_{k(\sE),p}(\fG(\sE)) \le v_p(r)-1.
\]
Recall that $\fG(\sE))$ is the residual gerbe of $\sE$ in the stack of bundles.
\end{proposition}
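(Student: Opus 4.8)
The plan is to turn the residual gerbe into a projective-module functor over a finite dimensional algebra and then feed it through the structure theory assembled above. Choose a finite extension $L/k(\sE)$ with $\fG(\sE)(L)\neq\varnothing$, a form $\sF'$ of $\sE$ over $L$, and put $A=\End(p_{*}\sF')$ and $d=[L:k(\sE)]$ as in the discussion preceding \ref{t:main2}. By \ref{t:main2} the groupoid $\fG(\sE)_{M}$ is equivalent, functorially in $M\supseteq k(\sE)$, to the groupoid of projective $A\otimes_{k(\sE)}M$-modules of rank $1/d$, so that $\ed_{k(\sE)}(\fG(\sE))=\ed(\Mod_{A,1/d})$ and $\ed_{k(\sE),p}(\fG(\sE))=\ed_{p}(\Mod_{A,1/d})$; it therefore suffices to bound these by $r-1$ and $v_{p}(r)-1$ respectively.

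Since $A$ is finite dimensional its radical $j(A)$ is nilpotent, so \ref{p:summary}(1) lets us replace $A$ by the semisimple quotient $A/j(A)\cong\prod_{i}M_{n_{i}}(\Delta_{i})$ with the $\Delta_{i}$ division $k(\sE)$-algebras; \ref{p:summary}(2) splits the functor accordingly, and since every projective $M_{n_{i}}(\Delta_{i})$-module is a direct sum of copies of the simple module $\Delta_{i}^{\,n_{i}}$, Morita equivalence (together with \ref{p:summary}(3) to put the rank in lowest terms) identifies the $i$-th factor with $\Mod_{\Delta_{i},n_{i}/d}$. Since essential dimension and essential $p$-dimension of a finite product of functors are bounded by the corresponding sums over the factors, one is reduced to bounding $\sum_{i}\ed(\Mod_{\Delta_{i},n_{i}/d})$ and the analogous $\ed_{p}$-sum. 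For essential dimension, \ref{p:ed-dimension} — applied, after descent along the centre, to the division $k(\sE)$-algebras $\Delta_{i}$ — bounds each term by $(n_{i}/d)\dim_{k(\sE)}\Delta_{i}-1$. For essential $p$-dimension one first uses \ref{p:primesplit} to pass to a prime-to-$p$ extension of $k(\sE)$, over which each $\Delta_{i}$ may be assumed of $p$-power index, and combines this with the $\Gm$-gerbe computation recalled in \ref{e:gerbe} to replace the crude bound by one governed by $v_{p}$ of the index of $\Delta_{i}$.

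The remaining step — and in my view the real work — is to check that these estimates sum to $r-1$, respectively $v_{p}(r)-1$. This is done by relating $n_{i}$, $d$, $\dim_{k(\sE)}\Delta_{i}$ and $\ind(\Delta_{i})$ to the indecomposable summands of $\sE$: decomposing $\sF'=\bigoplus_{\kappa}\sG_{\kappa}^{\,b_{\kappa}}$ into indecomposable bundles over $L$ and base changing $A=\End\bigl(\bigoplus_{\kappa}(p_{*}\sG_{\kappa})^{b_{\kappa}}\bigr)$ to $\overline{k(\sE)}$ (flat base change along $p$, as in \ref{p:flat}) shows that the simple factors of $A/j(A)$ correspond to the Galois orbits of the $\sG_{\kappa}$ and expresses the above invariants through $b_{\kappa}$, the orbit sizes, and the indices and dimensions of $\End(\sG_{\kappa})/j(\End(\sG_{\kappa}))$. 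The crucial inequality is then supplied by \ref{p:prime}, namely $\dim_{L}\bigl(\End(\sG_{\kappa})/j(\End(\sG_{\kappa}))\bigr)\le\rk(\sG_{\kappa})$ and its $v_{p}$-analogue — morally, the obstruction to descending a bundle has index dividing its rank — which forces $\ed(\Mod_{\Delta_{i},n_{i}/d})$ to be at most one less than the contribution of the corresponding orbit to $r=\sum_{\kappa}b_{\kappa}\rk(\sG_{\kappa})$; summing over $i$ gives $\ed(\Mod_{A,1/d})\le r-1$, and the parallel $v_{p}$-computation after the prime-to-$p$ reduction gives $\ed_{p}(\Mod_{A,1/d})\le v_{p}(r)-1$. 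I expect the main obstacle to be exactly this last bookkeeping: carrying the factor $d=[L:k(\sE)]$, the Morita rescaling of ranks, the centres of the $\Delta_{i}$ and the crossed-product contributions of $p_{*}$ through the estimates so that the final arithmetic closes with the sharp additive constant $1$.
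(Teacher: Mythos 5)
Your plan is sound and follows the same skeleton as the paper: reduce $\fG(\sE)$ to the functor $\Mod_{A,1/d}$ via \ref{t:main2}, kill the radical and split into simple factors via \ref{p:summary}, pass to division algebras by Morita rescaling, and close with \ref{p:prime}, \ref{p:ed-dimension} and (for the $p$-case) \ref{p:primesplit}. The one genuine divergence is where you take the Krull--Schmidt decomposition: you decompose $\sF'=\bigoplus_\kappa\sG_\kappa^{b_\kappa}$ over $L$ and then propose to recover the simple factors of $A/j(A)$ through Galois orbits and crossed-product data of $p_*$ --- exactly the bookkeeping you flag as the main obstacle. The paper avoids this entirely by decomposing the pushforward itself: writing $\pi:\sX_K\to\sX_{k(\sE)}$ and $\pi_*\sE\cong\bigoplus_i\sE_i^{n_i}$ with $\sE_i$ indecomposable over $k(\sE)$, one gets $\End(\pi_*\sE)/j\cong\prod_i M_{n_i\times n_i}(D_i)$ with $D_i=\End(\sE_i)/j(\sE_i)$ directly over $k(\sE)$, so \ref{p:prime} applies verbatim to the $\sE_i$ (using the rational point of $X$), and the identity $\sum_i n_i\rk(\sE_i)=\rk(\pi_*\sE)=dr$ makes the factor $d$ and the final additive constant come out automatically, with no orbit or centre bookkeeping. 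Two smaller remarks: for the $\ed_p$ bound the paper does not invoke the gerbe computation \ref{e:gerbe} at this stage; after \ref{p:primesplit} it uses the primary decomposition $D_i\cong\bigotimes_l D_{i,l}$, splits the prime-to-$p$ factors by a prime-to-$p$ extension, and then applies \ref{p:ed-dimension} to $D_{i,p}$ together with the $p$-adic half of \ref{p:prime} (the gerbe result enters only later, for simple bundles via \ref{p:index}); and the first inequality $\ed_{k(\sE)}(\fG(\sE))\le r-1$ is simply quoted from \cite[5.5]{crelle} rather than re-derived. If you redo your last step with the $\pi_*\sE$ decomposition, the arithmetic you worry about closes in a few lines.
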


\begin{proof}
The first assertion is proved in \cite[5.5]{crelle} so we concentrate on the second. By standard arguments, see
 loc. cit. we can
assume that $K/k(\sE)$ is a finite extension of degree $d$. We write $\pi:\sX_K\rightarrow \sX_{k(E)}$ for the projection. As in loc. cit. we
can decompose 
\[
\pi_* \sE \cong \bigoplus_i \sE_i^{n_i}
\]
where $\sE_i$ is indecomposable and $\End(\sE_i)/j(\sE_i) \cong D_i$ for some division ring. Further we have a decomposition
\[
\End(\pi_* \sE)/j(\sE) \cong \prod_i M_{n_i\times n_i} (D_i),
\]
where $M_{n\times n}(A)$ is the ring of $n\times n$ matrices over a ring $A$.

Each of the division rings decomposes as
\[
D_i\cong \bigotimes_l D_{i,l},
\]
where $D_{i,l}$ has index $l$, and the tensor product is over prime divisors of the index of $D$.
Note that by the theory of division rings we can split each of the factors $D_{i,l'}$ for $l\ne l'$ 
by passing to a prime to $l$ extension of the ground field.

As the dimension of $D_{i,l}$ is a power of $l$,  we have by \ref{p:prime}, $\dim_{k(\sE)} D_{i,l} = v_l(\dim_{k(E)} D_i) \le v_l(\rk(E_i))$.

We have
\begin{eqnarray*}
	\ed_p(\Mod_{\End(\pi_*\sE),1/d)}) &= &\sum \ed_p(\Mod_{M_{n_i\times n_i}(D_i)}, 1/d) \quad by\ \ref{p:summary} \\
	&= & \sum \ed_p({\Mod_{D_i},n_i/d}) \quad  \\
	&= &\sum\ed_p(\Mod_{D_{i,p},n_i/d}) \quad by\ \ref{p:primesplit} \\
	&< & \sum\frac{n_i}{d} v_p(\dim_{k(E)} D_i) \quad \ref{p:ed-dimension}\\
	&\le &\sum \frac{n_i}{d} v_p(\rk(\sE_i))\quad by\ \ref{p:prime}.
\end{eqnarray*}
The result follows from \ref{t:main2}.
\end{proof}

\section{The field of moduli of a parabolic bundle}

In this section $X$ will be a smooth projective curve over $k$.
We form the root stack
$$\sX = X_{(p_1,e_1),\ldots, (p_l,e_l)}.$$  The points $p_i$ are distinct.
Corresponding to this there are root line bundles (see \cite{cadman}) written $\sN_i$  
on the root stack $\sX$ and coarse moduli map
$$q:\sX\rightarrow X.$$

\begin{remark}\label{r:local2}
	In this situation we can find an open affine cover $V_i=\Spec(R_i)$ 
	of $X$ so that each open contains at most one orbifold point. When there is an orbifold point
	in $R_i$, of ramification index $e$ say, the open set $U_i=q^{-1}(V_i)$ has the following description. We can  assume that
	the line bundles are trivial on $V_i$ and that $s\in R_i$ is the section that vanishes at the orbifold point.
	Then the scheme 
	\[ \tilde R_i := R_i[X]/<X^e-s>\]
	has an action of the group scheme $\mu_e$. The quotient stack is the root stack.
    For details see \cite{cadman}. It follows that vector bundles on $U_i$ are just projective
    modules over $\tilde R_i$ with a grading by $\Hom_{\mathrm{groups}}(\mu_e,\Gm)$. By further shrinking the open set
    they become free modules with grading. 
    
    When the section does not vanish, or there is no orbifold point, the action of $\mu_e$ is free and the quotient is the open set
    $V_i=U_i$. In this case vector bundles with action by $\mu_e$ amount to vector bundles on $V_i$.
     In particular, when the orbifold point is removed from the open set, i.e over the
    open set $U_i\setminus {p}$, this observation applies.
\end{remark}

\begin{definition}\label{d:essentiallyFree}
	We will call a vector bundle $\sF$ on a root stack $\sY$ \emph{essentially free} if
	we can find a presentation for $\sY$ as desribed in the remark, ie
	$$
	 \sY= [  R[X]/<X^e-s>/\mu_e],
	$$
	and $\sF$ corresponds to a free module on $R[X]/<X^e-s>$ with $\mu_e$-action.
\end{definition}

If $k\rightarrow A$ is a $k$-algebra we denote by $X_A,\ \sX_A$ etc the base change to $A$.

\subsection{A review of deformation theory}

Consider a vector bundle on $\sF$ on $\sX_A$ where $k\rightarrow A$ is a local Artinian $k$-algebra.
Consider a square zero extension of local Artinian $k$-algebras
$$
0\rightarrow I \rightarrow B \rightarrow A\rightarrow 0.
$$
Given an open set  $V\subset X$
we will often abuse notation and write $V$ when we really mean $q^{-1}(V)$.

\begin{proposition}
	In the above situation there is an affine cover $U_i, 1\le i \le N,$ of $X$ so that 
	\begin{enumerate}
		\item there is at most one orbifold point in each $U_i$,
		\item the vector bundle $\sF_{U_i}$ is essentially free,
		\item there is a lift of $\sF_{U_i}$ to a vector bundle $\tilde\sF_i$ on $U_i\subseteq \sX_B$
		which is essentially free,
		\item given a homomorphism $\rho: \sF\rightarrow \sF$ there is a local lift to 
		$$ \tilde\rho: \tilde\sF_i\rightarrow\tilde\sF_i.$$
		\item any two lifts of the homomorphism differ by a section of
		$ \Gamma(U_i,\shom(\sF,\sF\otimes_A I))$.
	\end{enumerate}
\end{proposition}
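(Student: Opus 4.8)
The plan is to build the cover locally using the structural facts about root stacks already established, then deduce the lifting statements from the local description as quotients by $\mu_e$ together with the standard deformation theory of free modules.

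First I would construct the cover. By Remark \ref{r:local2} we may choose a finite affine cover $V_i = \Spec(R_i)$ of $X$ such that each $V_i$ contains at most one orbifold point, the line bundles are trivial on $V_i$, and $U_i := q^{-1}(V_i) = [\Spec(\tilde R_i)/\mu_e]$ with $\tilde R_i = R_i[X]/\langle X^e - s\rangle$ (and $U_i = V_i$ with trivial action when there is no orbifold point). This gives (1). For (2), a vector bundle on $U_i$ is a finitely generated projective graded $\tilde R_{i,A}$-module; after further shrinking each $V_i$ (keeping the cover finite and still satisfying (1)) we may assume this module is free with its $\mu_e$-grading, which is exactly essential freeness in the sense of Definition \ref{d:essentiallyFree}. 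So $\sF_{U_i}$ is essentially free. For (3), essential freeness reduces the problem to lifting a free graded $\tilde R_{i,A}$-module to $\tilde R_{i,B}$: one simply takes the free graded $\tilde R_{i,B}$-module on a basis of the same graded ranks. Here $\tilde R_{i,B} = R_{i,B}[X]/\langle X^e - s_B\rangle$ makes sense since $B$ is a $k$-algebra and $e$ is coprime to $\operatorname{char}(k)$, so the $\mu_e$-action persists and the lift is again essentially free; its restriction to $U_i \subseteq \sX_B$ is the required $\tilde\sF_i$.

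For (4) and (5), fix $i$ and work over $\tilde R_{i,B}$, writing $F$ for the free graded $\tilde R_{i,A}$-module underlying $\sF_{U_i}$ and $\tilde F$ for the chosen free graded lift. A homomorphism $\rho\colon \sF_{U_i}\to\sF_{U_i}$ is a degree-zero $\tilde R_{i,A}$-linear endomorphism of $F$; choosing the graded basis, $\rho$ is a matrix with entries in $\tilde R_{i,A}$ respecting the grading pattern. Lift each entry arbitrarily to $\tilde R_{i,B}$ respecting degrees — possible since $\tilde R_{i,B}\to\tilde R_{i,A}$ is surjective and $\mu_e$-equivariant, so it is surjective on each graded piece — to obtain $\tilde\rho\colon\tilde F\to\tilde F$. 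This proves (4). For (5), if $\tilde\rho$ and $\tilde\rho'$ are two such lifts then $\tilde\rho - \tilde\rho'$ is a degree-zero $\tilde R_{i,B}$-linear map $\tilde F\to\tilde F$ that vanishes modulo $I$, hence factors through $\tilde F/I\tilde F \cong F$ on the source and lands in $I\tilde F = \tilde F\otimes_B I \cong F\otimes_A I$ (using the square-zero condition, so $I$ is an $A$-module), i.e. it is an element of $\operatorname{Hom}_{\tilde R_{i,A},\,\mathrm{gr}}(F, F\otimes_A I)$. Unwinding the equivalence between graded $\tilde R_{i,A}$-modules and vector bundles on $U_i$, this is precisely $\Gamma(U_i, \shom(\sF,\sF\otimes_A I))$, giving (5).

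The main obstacle is bookkeeping rather than conceptual: one must be careful that all choices (the cover, the shrinking to make modules free, the entrywise lifts) are compatible with the $\mu_e$-grading and stay within a \emph{finite} affine cover satisfying (1), and that the identification of graded-$\tilde R_{i,A}$-linear maps with sections of $\shom$ on the stack is the one induced by Theorem \ref{t:correspondence} and the exactness of $q_*$ (Lemma \ref{l:exact}). Once the translation "essentially free vector bundle $\leftrightarrow$ free graded module" is in place, everything else is the elementary deformation theory of free modules along a square-zero extension, where obstructions vanish and the ambiguity in a lift is the expected $\operatorname{Hom}$-group.
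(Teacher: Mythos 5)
Your proof is correct and follows essentially the same route as the paper: use the local description of the root stack as $[\Spec(R[t]/\langle t^e-s\rangle)/\mu_e]$ from Remark \ref{r:local2}, identify equivariant bundles with graded modules that become free (essentially free) after shrinking, lift the free graded module and the endomorphism entrywise to the square-zero extension, and obtain (5) by the standard diagram chase. Your write-up simply spells out the diagram chase and the grading bookkeeping that the paper leaves implicit.
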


\begin{proof}
 The first two items follow from \ref{r:local2} or \cite[3.12]{borne}. 
 For the third and fourth we are in the situation of \ref{r:local2}.  Our base changed stacks are
 $$ [\Spec(R_B[t]/<t^{e}-x>)/\mu_e]\hookleftarrow [\Spec(R_A[t]/<t^{e}-x>)/\mu_e]=q^{-1}(U_i)_A.$$
 Our vector bundle on $[\Spec(R_A[t]/<t^{e}-x>)/\mu_e]$ amounts to a direct sum
 of copies of $R_A[t]/<t^{e}-x>$ with $\mu_e$ action. The action
can be chosen to act on each direct summand individually and  is tantamount to a $\Hom_{\mathrm{groups}}(\mu_e,\Gm)$-grading.
 This lifts in the obvious way to $R_B[t]/<t^{e}-x>$ along with an endomorphism.
 The fifth item is a standard diagram chase.
\end{proof}

Consider an endomorphism $\theta : \sF\rightarrow \sF$. There is an associated morphism
$$
[\theta,-]=\theta_*-\theta^* :\sEnd(\sF)\rightarrow \sEnd(\sF)
$$
Let $P(\sF,\theta)$ be the cone of $\theta_*-\theta^*$, shifted by one, it is a complex concentrated in degrees 0 and 1. One should view this complex as the dual of the cone of $[\theta,-]$.

\begin{theorem}\label{t:usualdef} In the above situation:
	\begin{enumerate}
		\item There is an obstruction in $\h^2(\sX,P(\sF,\theta)\otimes_A I)$ whose vanishing is necessary
		and sufficient for a lift of $(\sF,\theta)$ to $\sX_B$.
		\item When the obstruction vanishes, the space of lifts is an affine space  abstractly isormophic to
		$\h^1(\sX,P(\sF,\theta)\otimes_A I)$.
		\item The automorphism of a lift is 
		$\h^0(\sX,P(\sF,\theta)\otimes_A I)$.
	\end{enumerate}
\end{theorem}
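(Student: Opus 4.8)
The plan is to reduce the statement to the standard deformation theory of a pair $(\sF,\theta)$ by recognizing the complex $P(\sF,\theta)$ as the object that governs it, then to compute the relevant hypercohomology via a \v{C}ech resolution built from the affine cover produced by the preceding proposition. First I would set up the deformation problem explicitly: a lift of $(\sF,\theta)$ to $\sX_B$ consists of a vector bundle $\tilde\sF$ on $\sX_B$ restricting to $\sF$ over $\sX_A$ together with an endomorphism $\tilde\theta$ restricting to $\theta$. Using the cover $U_i$ from the proposition, over each $U_i$ there is a local lift $(\tilde\sF_i,\tilde\rho_i)$ by items (3) and (4), and by item (5) any two local lifts of the bundle differ by a cocycle valued in $\Gamma(U_i,\sEnd(\sF)\otimes_A I)$, while two local lifts of the endomorphism differ by a section of the same sheaf. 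So the data patching these together lives naturally in a \v{C}ech-style bicomplex whose two rows are the \v{C}ech complexes of $\sEnd(\sF)\otimes_A I$ (tracking the bundle gluing, in degree-shift $0$) and of $\sEnd(\sF)\otimes_A I$ again (tracking the endomorphism, shifted by one), with vertical differential given precisely by $[\theta,-]=\theta_*-\theta^*$ dualized; this total complex computes $\h^\bullet(\sX,P(\sF,\theta)\otimes_A I)$ by the very definition of $P(\sF,\theta)$ as the shifted cone of $[\theta,-]$.

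The core of the argument is then the standard three-term analysis of the resulting total \v{C}ech complex. For (1), I would take the local lifts $(\tilde\sF_i,\tilde\rho_i)$, compare them on overlaps $U_{ij}$: the discrepancy in the bundle gives a $1$-cochain $g_{ij}$ and the failure of the local endomorphisms to agree gives a $0$-cochain $h_i$ twisted against $[\theta,-]$, and together they assemble to a $2$-cocycle in the total complex, i.e. a class in $\h^2(\sX,P(\sF,\theta)\otimes_A I)$; modifying the local lifts changes this class by a coboundary, so the class is well-defined and vanishes iff the local lifts can be adjusted to glue into a global lift of the pair. For (2), once a lift exists, the difference of two global lifts is a global $1$-cocycle of the total complex modulo coboundaries, so the set of lifts is a torsor under $\h^1(\sX,P(\sF,\theta)\otimes_A I)$, hence (non-canonically) an affine space on that group. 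For (3), an automorphism of a fixed lift $(\tilde\sF,\tilde\theta)$ that is the identity modulo $I$ is an element $1+\phi$ with $\phi\in\Gamma(\sX,\sEnd(\sF)\otimes_A I)$ commuting with $\tilde\theta$, i.e. $\phi$ lies in the kernel of $[\theta,-]$ on global sections, which is exactly $\h^0(\sX,P(\sF,\theta)\otimes_A I)$.

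Two technical points need care. One is genuinely routine: because $\sX_B\to\sX_A$ is a square-zero extension and $\sF$ is locally essentially free, the automorphisms and obstructions reduce to computations with $\sEnd(\sF)\otimes_A I$ exactly as in the classical scheme case, the tameness of the root stack (Lemma~\ref{l:exact}) ensuring that pushforward to the coarse space behaves well and that no higher cohomology of the structure sheaf of $B\mu_e$ interferes; this is where the local analysis of Remark~\ref{r:local2} and Definition~\ref{d:essentiallyFree} does all the work. The main obstacle, and the step I would spend the most care on, is verifying that the total-complex differential really is $[\theta,-]$ with the correct sign and shift — that is, matching the deformation-theoretic cocycle conditions (how $g_{ij}$ and $h_i$ interact through $\theta$) precisely against the definition of $P(\sF,\theta)$ as the cone of $\theta_*-\theta^*$ shifted by one. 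Getting the identification of $P(\sF,\theta)$ with ``the dual of the cone of $[\theta,-]$'' pinned down compatibly with the Serre-duality/pushforward conventions used elsewhere in the paper is the delicate bookkeeping; once that is fixed, (1)--(3) follow from the general machinery of hypercohomology of a two-term complex and the spectral sequence of the \v{C}ech bicomplex.
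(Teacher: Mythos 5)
Your overall strategy is the paper's: compute with the \v{C}ech bicomplex of the two-term complex $P(\sF,\theta)\otimes_A I$ on the cover supplied by the preceding proposition, with vertical differential $[\theta,-]$, and read off the obstruction, torsor and automorphism statements from total degrees $2$, $1$, $0$. However, your description of the obstruction class in (1) is mis-indexed as written. The gluing isomorphisms $g_{ij}$ and the local endomorphism lifts are \emph{choices}, not the cocycle; a pair consisting of a \v{C}ech $1$-cochain in the degree-$0$ term and a \v{C}ech $0$-cochain in the degree-$1$ term has total degree $1$, so it cannot represent a class in $\h^2(\sX,P(\sF,\theta)\otimes_A I)$. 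The actual obstruction is the pair $(c_{ijk},\tau_{ij})$, where $c_{ijk}=1-g_{ik}^{-1}g_{jk}g_{ij}$ measures the failure of the cocycle condition on triple overlaps (bidegree $(2,0)$) and $\tau_{ij}=g_{ij}\tilde\theta_i g_{ij}^{-1}-\tilde\theta_j$ measures the endomorphism discrepancy on double overlaps (bidegree $(1,1)$).

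Relatedly, the step you defer as ``delicate bookkeeping'' is in fact the substance of the paper's proof: one must verify the total-complex cocycle conditions $\partial(c_{ijk})=0$ and $\partial(\tau_{ij})=\pm[\theta,c_{ijk}]$, using repeatedly that the $g_{ij}$ reduce to the identity modulo $I$ (so that, for instance, $g_{ij}^{-1}c_{jkl}g_{ij}=c_{jkl}$), and then check that if $(c_{ijk},\tau_{ij})$ is a coboundary of some $(b_{ij},\tau_i)$ one can correct the gluing data to $g_{ij}+b_{ij}$ and the local endomorphism lifts to $\tilde\theta_i+\tau_i$ so that they patch globally. Your treatment of (2) and (3) matches the paper: an automorphism of a lift congruent to the identity modulo $I$ and commuting with the lifted endomorphism is exactly an element of the kernel of $[\theta,-]$ on global sections, i.e.\ $\h^0$, and since any two lifts are locally isomorphic (essential freeness) their difference gives a $1$-cocycle, making the set of lifts an affine space under $\h^1$. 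So the route is the same as the paper's; what is missing is the corrected degree bookkeeping for the obstruction cocycle and the actual verification of the cocycle identities, which is where all the work lies.
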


\begin{proof}
	We use Cech cohomology on a cover as in the previous proposition. A more canonical proof, using the cotangent complex can be found in \cite[Ch. IV]{illusie}.
	
	We write $U_{ij}=U_i\cap U_j$
	and use analogous notation for higher intersections.
	Choose an isomorphism $g_{ij}:\tilde\sF_i \rightarrow \tilde\sF_j$ over $U_{ij}$ so that the following diagram commutes:
	
	\begin{equation}\label{eq:lift}
		\begin{tikzcd}
			0 \arrow[r] & \sF|_{U_{ij}}\otimes_A I \arrow[r] \arrow[equal]{d} &\tilde \sF_i|_{U_{ij}} \arrow[r] \arrow["g_{ij}"]{d} & \sF|_{U_{ij}}  \arrow[r]  \arrow[equal]{d} & 0 \\
			0 \arrow[r] & \sF|_{U_{ij}} \otimes_A I\arrow[r] & \tilde \sF_j|_{U_{ij}} \arrow[r] & \sF|_{U_{ij}} \arrow[r] & 0. \\
		\end{tikzcd}
	\end{equation}
	To simplify notation we will write $\sF_i$ for $\sF|_{U_i}$.
	
	On triple overlaps we have
	$$
	g_{ik}^{-1}g_{jk}g_{ij}:\tilde \sF_i\rightarrow \sF_i.
	$$
	Now let $1$ be the identity map of $\sF_i$. Using the diagram (\ref{eq:lift}), one finds that,
	$$
	1-g_{ik}^{-1}g_{jk}g_{ij} = c_{ijk} :\sF_i\rightarrow \sF_i,
	$$
	for some $c_{ijk}$ sections of $\sEnd(\sF)\otimes_A I$ over the triple intersection.
	Similary, on double overlaps we have
	$$
	\tau_{ij}:g_{ij}\tilde \theta_i g_{ij}^{-1} - \tilde \theta_j :\sF_j\rightarrow \sF_j \otimes_A I.
	$$
	
	We claim that $(c_{ijk},\tau_{ij})$ is a 2-cocycle. The Cech complex for $P(\sF,\theta)$ looks like:
	\begin{center}
		\begin{tikzcd}
			C^0(U_i,\sEnd(\sF)) \arrow{r}& 	C^1(U_i,\sEnd(\sF)) ) \arrow["\partial"]{r}  &	C^2(U_i,\sEnd(\sF)) \\
			 C^0(U_i,\sEnd(\sF)) \arrow{r} \arrow{u} & 	C^1(U_i,\sEnd(\sF)) ) \arrow["\partial"]{r} \arrow{u} &	C^2(U_i,\sEnd(\sF)) \arrow{u}{[\theta,-]} \\
		\end{tikzcd}
	\end{center}
	To check this we need to show
	\begin{eqnarray*}
		\partial(c_{ijk}) &=& 0 \\
		\partial(\tau_{ij}) &=& [\theta, c_{ijk}] 
	\end{eqnarray*}
To check the first item observe that
\begin{eqnarray*}
	1 &=& (g_{ik}^{-1} g_{jk}g_{ij})(g_{ij}^{-1} g_{jk}^{-1} g_{kl}^{-1} g_{jl} g_{ij} )
	(g_{ij}^{-1}g_{jl}^{-1}g_{il})(g_{il}^{-1}g_{kl}g_{ik})\\
	&=& (1-c_{ijk})(1+g_{ij}^{-1} c_{jkl} g_{ij} )(1+c_{ijl})(1-c_{ikl})\\
	&=& 1 + \partial(c_{ijk}).
\end{eqnarray*}
In the above, to pass from the second to last line, notice that $g_{ij}^{-1}c_{jkl}g_{ij}=c_{jkl}$ by
\ref{eq:lift}. To check the second condition,
\begin{eqnarray*}
	\partial(\tau_{ijk}) &=& \tau_{ij} - \tau_{ik} + \tau_{jk} \\
	&=& g_{ij} \tilde \theta_i g_{ij}^{-1} - \tilde \theta_j  - g_{ik}\tilde \theta_i g_{ik}^{-1}
	+\tilde \theta_k + g_{jk}\tilde \theta_k g_{jk}^{-1} - \tilde \theta_k \\
    &=& g_{jk}g_{ij} \tilde \theta_i g_{ij}^{-1}g_{jk}^{-1} - g_{jk}\tilde \theta_j g_{jk}^{-1} 
    -g_{ik}\tilde \theta_i g_{ij}^{-1} + g_{jk}\tilde \theta_k g_{jk}^{-1} \\
    &=& g_{ik}^{-1}g_{jk}g_{ij}\tilde \theta_i g_{ij}^{-1}g_{jk}^{-1}g_{ik} - \tilde \theta_i \\
    &=& -[\theta, c_{ijk}].
\end{eqnarray*}
We have used the diagram (\ref{eq:lift}) multiple times in the above.
	
	If this cycle is exact, by say $(b_{ij}, \tau_i)$
	then we can alter the gluing data and lifts to
	$$
	g_{ij} + b_{ij}\quad\text{and}\quad \tilde\theta_i+\tau_i
	$$
	so that they glue globally as needed.
    The verification that this works is a repeat of the calculations above.
    
    For part (3), the identification is obtained by observing that if $\alpha$ is an automorphism of
    a lift that preserves the lifted endomorphism, then $1-\alpha$ is a required global section of the 
    complex. For part (2), any two lifts are locally the same, as all modules involved are essentially free.
    One then reduces (2) to (3) to obtain the required cocycle. Details are omitted.
\end{proof}

\subsection{A review of the filtered derived category}\label{ss:filtered}

We will need to make use of the filtered derived category of an abelian category $\bA$. The theory is spelled out in detail in \cite[Ch. V]{illusie} but let us spend a few paragraphs recalling some of its main points
that will be needed. 

Given an abelian category $\bA$, we denote by $\cf(\bA)$ the category of chain complexes in $\bA$ equipped with a finite descending filtration. Given an object $C^\bullet$ of $\cf(\bA)$ we will often denote its filtration 
by 
\[
F^n(C^\bullet)\supseteq F^{n+1}(C^\bullet)\supseteq \ldots F^m(C^\bullet)=0.
\]
The morphisms of the category are chain maps respecting the filtration. There is a functor denoted $\gr$
from $\cf(\bA)$ to the category of graded complexes. Given an object $M$ of $\cf(\bA)$ and an integer
$n$, we can shift its filtration and obtain a new object $M(n)$ of $\cf(\bA)$ with
$$
F^i(M(n)) = F^{i+n}(M(n)), \quad\text{ and an unchanged underlying complex.}
$$
A filtered quasi-isomorphism is a morphism that is a quasi-isomorphim on all pieces of the filtration.
The filtered derived category, $D_{\fil}(\bA)$ is obtained from $\cf(\bA)$ by inverting filtered quasi-isomorphisms.
This allows to define the filtered extension groups by
$$
\Ext_{\fil}^n(L,M) := \Hom_{D_{\fil}(\bA)}(L,M[n]),
$$
see \cite[ChV. 1.2.3]{illusie}.

Consider two filtered complexes, $L$ and $M$. We can forget the filtration and form the usual chain
complex $\Hom^\bullet(L,M)$ whose $k$th piece is
$$
\Hom^k(L,M) =\prod_i \Hom(L^i, M^{i+k}).
$$
The differential has the usual sign rule. There is a subcomplex, $\Hom^\bullet_{\cf(\bA)}(L,M)$ of 
$\Hom^\bullet(L,M)$ defined by
$$
\Hom^k_{\cf(\bA)}(L,M) =\prod_i \Hom_{\fil(\bA)}(L^i, M^{i+k}),
$$
i.e those homomorphisms that preserve the filtration.
This allows us to equip $\Hom^\bullet(L,M)$ with a filtration, given by
$$
F^n\Hom^\bullet(L,M) := \Hom^k_{\cf(\bA)}(L,M(n)). 
$$
One checks that there is a chain map
$$
\gr\Hom^\bullet(L,M)\rightarrow \Hom^\bullet(\gr L,\gr M).
$$
A filtered complex $I^\bullet$ is said to be \emph{filtered injective} if 
${F^n}I^{\bullet}$ is a complex of injectives for every $n$. If the underlying abelian category has
enough injectives then $\cf(\bA)$ has enough filtered injectives, \cite[Ch. V. 1.4.4]{illusie}.
We will make use of 
\begin{lemma}\label{l:gr}
	If $M$ is filtered injective then 
	$$
	\gr\Hom^\bullet(L,M)\rightarrow \Hom^\bullet(\gr L,\gr M).
	$$
	is an isomorphism.
\end{lemma}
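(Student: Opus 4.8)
The plan is to unwind the definitions on both sides and check that the comparison map is an isomorphism in each chain degree $k$, since an isomorphism of complexes is checked degreewise. Fix $k$. The right-hand side is $\Hom^k(\gr L, \gr M) = \prod_i \Hom_{\mathrm{gr}}((\gr L)^i, (\gr M)^{i+k})$, which since $\gr$ of a filtered object decomposes as a direct sum over the filtration index, further breaks up as $\prod_i \bigoplus_{n} \Hom((\gr^n L)^i, (\gr^n M)^{i+k})$. On the left-hand side, $\gr^n \Hom^\bullet(L,M)$ in degree $k$ is the subquotient $F^n\Hom^k(L,M)/F^{n+1}\Hom^k(L,M)$, which by the definition $F^n\Hom^k(L,M) = \Hom^k_{\cf(\bA)}(L, M(n)) = \prod_i \Hom_{\fil}(L^i, M(n)^{i+k})$ is the space of those families of maps $L^i \to M^{i+k}$ that carry $F^p L^i$ into $F^{p+n} M^{i+k}$ for all $p$, modulo those carrying $F^p L^i$ into $F^{p+n+1}M^{i+k}$. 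The comparison map sends such a family to its induced map on associated gradeds.

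The key step is to show that for a \emph{single} pair of filtered objects $A = L^i$, $B = M^{i+k}$, with $B$ injective in $\bA$ and $F^\bullet B$ a filtration by injective subobjects (this is exactly the hypothesis that $M$ is filtered injective, read off in each degree), the natural map
\[
\frac{\{ \varphi: A \to B \mid \varphi(F^p A)\subseteq F^{p+n}B \ \forall p \}}{\{ \varphi \mid \varphi(F^p A)\subseteq F^{p+n+1}B\ \forall p\}} \longrightarrow \bigoplus_q \Hom\big(\gr^q A, \gr^{q+n} B\big)
\]
is an isomorphism. Since filtrations are finite, one can prove this by induction on the length of the filtration on $A$ (or on $B$), the base case being a filtration concentrated in one degree where the statement is a tautology. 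For the inductive step one uses that $F^p B$ is injective to split the relevant short exact sequences of $\Hom$-groups: injectivity of $F^{p+n}B$ guarantees that a map $F^p A \to F^{p+n}B$ extends to $A \to F^{p+n}B$ and hence to $A\to B$ compatibly, which is precisely what makes the $\gr$ map surjective, and a diagram chase with the finiteness of the filtration gives injectivity. Then I would take the product over $i$ and observe all constructions are compatible with the differentials, so the degreewise isomorphisms assemble into an isomorphism of complexes.

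The main obstacle I anticipate is bookkeeping rather than conceptual: one must be careful that ``filtered injective'' as defined (each $F^n I^\bullet$ a complex of injectives) really does deliver, in a fixed chain degree, a filtered object $B = M^{i+k}$ all of whose filtration steps $F^n B$ are injective objects of $\bA$ — this is immediate from the definition since $(F^n M^\bullet)^{i+k} = F^n(M^{i+k})$ — and then that the splittings chosen at the level of $\Hom$-groups can be made uniformly enough to respect the product over $i$ and the differential. Because everything in sight is a finite filtration and injectivity is exactly the lifting property needed, no serious difficulty should arise; the only care required is to phrase the induction so that it applies simultaneously to all degrees and is visibly natural in $L$, so that it commutes with the differential of $\Hom^\bullet$. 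I would also remark that the unfiltered comparison map $\gr\Hom^\bullet(L,M)\to\Hom^\bullet(\gr L,\gr M)$ constructed just before the lemma is, by construction, the map analysed above, so there is nothing further to identify.
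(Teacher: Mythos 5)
Your argument is essentially correct, but note that the paper does not prove this lemma at all: it simply cites \cite[Ch.~V, 1.4.1]{illusie}, so what you have written is a reconstruction of the standard argument behind that reference rather than an alternative to an in-paper proof. Your reduction to a single pair of filtered objects $A=L^i$, $B=M^{i+k}$ with all $F^nB$ injective is the right move, and the finiteness of the filtrations is indeed what makes everything go through. Two refinements would tighten it. First, injectivity of the comparison map needs no hypothesis and no diagram chase: a $\varphi\in F^n\Hom(A,B)$ induces the zero map on all graded pieces exactly when $\varphi(F^pA)\subseteq F^{p+n+1}B$ for every $p$, which is precisely the condition $\varphi\in F^{n+1}\Hom(A,B)$; so only surjectivity uses filtered injectivity. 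Second, rather than running an induction with extensions chosen ``compatibly'' (which is where the real bookkeeping hides), it is cleaner to observe that since each inclusion $F^{q+1}B\hookrightarrow F^qB$ has injective source and the filtration is finite, the filtration on $B$ splits, $B\cong\bigoplus_q \gr^qB$ with $F^pB=\bigoplus_{q\ge p}\gr^qB$ and each $\gr^qB$ injective; then $F^n\Hom(A,B)\cong\bigoplus_q\Hom(A/F^{q-n+1}A,\gr^qB)$, and injectivity of $\gr^qB$ applied to $0\to\gr^{q-n}A\to A/F^{q-n+1}A\to A/F^{q-n}A\to 0$ identifies $\gr^n\Hom(A,B)$ with $\bigoplus_p\Hom(\gr^pA,\gr^{p+n}B)$, visibly naturally in $L$ and hence compatibly with the differentials. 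Finally, fix the indexing in your description of the target: the degree-$n$ graded piece of $\Hom^\bullet(\gr L,\gr M)$ consists of maps shifting the filtration index by $n$, i.e.\ $\prod_i\bigoplus_q\Hom(\gr^qL^i,\gr^{q+n}M^{i+k})$, as you correctly use later in the key step, not the degree-preserving maps written in your first display.
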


\begin{proof}
	See \cite[Ch. V 1.4.1]{illusie}.
\end{proof}

Depending on how one develops the theory, one proves (or defines)
$$
R\Hom^\bullet(L,M) := R\Hom^\bullet(L,I)
$$
where $M\rightarrow I$ is a quasi-isomorphism and $I$ is filtered injective.

\begin{proposition}
	One has 
	$$
	\Ext_{\fil}^n(L,M) = H^n(F^0R\Hom^\bullet(L,M)).
	$$
\end{proposition}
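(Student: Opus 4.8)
~The plan is to unwind the definitions from Section~\ref{ss:filtered}, since the statement is essentially a bookkeeping identity once one is careful about what $\Ext^n_{\fil}$ and $R\Hom^\bullet$ mean. First I would pick a filtered quasi-isomorphism $M\rightarrow I$ with $I$ filtered injective, which exists by \cite[Ch. V. 1.4.4]{illusie} since our abelian category has enough injectives, and then by definition $R\Hom^\bullet(L,M)=\Hom^\bullet(L,I)$ as a filtered complex. So the right-hand side is $H^n(F^0\Hom^\bullet(L,I))=H^n(\Hom^0_{\cf(\bA)}(L,I(0)))$ in the notation introduced above, i.e.\ the $n$th cohomology of the subcomplex of filtration-preserving maps $L^i\to I^{i}$ (up to the shift built into the $F^0$ notation). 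The claim is that this computes $\Hom_{D_{\fil}(\bA)}(L,M[n])$.

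The key step is the standard homotopy-category reduction: since $I$ is filtered injective, maps in the filtered derived category out of $L$ into $I$ (or its shifts) can be computed in the filtered homotopy category, i.e.\ $\Hom_{D_{\fil}(\bA)}(L,I[n])=\Hom_{K_{\fil}(\bA)}(L,I[n])$, with no need to invert anything further on the target side. This is the filtered analogue of the usual fact that $R\Hom$ into an injective resolution needs no localization, and it is exactly the content one extracts from \cite[Ch. V]{illusie} (the filtered injectives play the role of the localizing/adapted subcategory). Granting this, $\Hom_{K_{\fil}(\bA)}(L,I[n])$ is by definition the set of filtered chain maps $L\to I[n]$ modulo filtered homotopy, and this is precisely $H^n$ of the complex $\Hom^\bullet_{\cf(\bA)}(L,I)$ whose degree-$k$ term is $\prod_i\Hom_{\fil(\bA)}(L^i,I^{i+k})$ --- because a filtered chain map is a degree-$0$ cocycle there and a filtered homotopy is a degree-$(-1)$ element whose differential is the difference. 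Matching this up with $F^0\Hom^\bullet(L,M):=\Hom^\bullet_{\cf(\bA)}(L,M(0))$ applied to $I$ in place of $M$, and invoking $R\Hom^\bullet(L,M)=\Hom^\bullet(L,I)$, gives the stated formula.

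The main obstacle, such as it is, is purely one of conventions: making sure the shift "$(0)$" in the definition $F^n\Hom^\bullet(L,M)=\Hom^k_{\cf(\bA)}(L,M(n))$ together with the passage to $R\Hom$ via a filtered injective resolution really does land on the filtration-preserving maps and not on, say, maps shifting the filtration by one, and checking that the differential and the homotopy relation in $\Hom^\bullet_{\cf(\bA)}$ translate correctly into cocycles and coboundaries. I would handle this by writing out the degree-$0$ and degree-$(-1)$ terms of $\Hom^\bullet_{\cf(\bA)}(L,I)$ explicitly and identifying a $0$-cocycle with a filtered chain map and a $0$-coboundary with a filtered null-homotopy, then citing \cite[Ch. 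V 1.2.3]{illusie} for the definition of $\Ext_{\fil}^n$ and \cite[Ch. V 1.4.4]{illusie} for existence of filtered injective resolutions. Lemma~\ref{l:gr} is not needed here; it is recorded for the $\gr$-compatibility used elsewhere. So the proposed proof is short: resolve $M$ by a filtered injective $I$, invoke the homotopy-category computation of $\Hom$ into a filtered injective, and read off $H^n(F^0\Hom^\bullet(L,I))$ as the answer.
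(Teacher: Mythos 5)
Your proposal is correct, but it is worth noting that the paper does not actually argue this point at all: its ``proof'' is a bare citation of \cite[Ch.~V, 1.4.6]{illusie}. What you have written is essentially the standard argument behind that citation: choose a filtered quasi-isomorphism $M\to I$ with $I$ filtered injective, observe that $F^0\Hom^\bullet(L,I)$ is exactly the complex of filtration-preserving maps, so that its $n$-cocycles are filtered chain maps $L\to I[n]$ and its coboundaries are filtered null-homotopies, and conclude $H^n(F^0\Hom^\bullet(L,I))=\Hom_{K_{\fil}}(L,I[n])=\Hom_{D_{\fil}(\bA)}(L,M[n])$. The one step you assert rather than prove --- that morphisms in $D_{\fil}(\bA)$ into a (bounded-below) filtered injective complex are computed in the filtered homotopy category --- is precisely the nontrivial content of Illusie's treatment: it requires that the filtered injectives form a class adapted to filtered quasi-isomorphisms (equivalently, that a filtered-acyclic complex has no nonzero homotopy classes of filtered maps into a filtered injective, applied degreewise to $F^nL\to F^nI$ and finitely many steps of the filtration), and this is where the finiteness of the filtration and the boundedness of the resolution enter. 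So your route and the paper's are the same mathematics; the difference is that the paper outsources everything to \cite{illusie}, while you make the reduction explicit and would only need to cite (or reprove) the adaptedness lemma. If you keep your version, either quote that lemma from \cite[Ch.~V]{illusie} or include the short acyclicity argument; as written, the proposal is sound but that pivotal identification is the very statement being delegated.
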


\begin{proof}
	See \cite[Ch. V, 1.4.6]{illusie}.
\end{proof}

\begin{remark}\label{r:nilfil}
One way of obtaining a filtered sheaf is by starting with a sheaf  $\sE $ equipped with a nilpotent  morphism  $\theta  $. We define  $F ^ i \sE =\im (\theta ^ i)  $.
With this filtration we obtain a filtered morphism

$$\theta: \sE \longrightarrow \sE (1). $$
\end{remark}

\subsection{Vector bundles with nilpotent endomorphism}

Let $\Nil_{n,\sX}$ denote the stack of vector bundles with nilpotent endomorphism on $\sX$ defined as follows. 
For a $k$-scheme $S$, 
\begin{itemize} \item Objects of $\Nil_{n,\sX}(S)$ are pairs $(\sG , \theta)$ where $\sG$ is a vector bundle on $\sX_S$ and $\theta$ is a nilpotent endomorphism of $\sG$. Each of the sheaves $\coker \theta^i$ are assumed to be flat over $S$. Further $\theta^n=0$.
	\item A morphism between $(\sG, \theta)$ and $(\sG', \theta')$ is an isomorphism of sheaves $\alpha : \sG \rightarrow \sG'$ with $\alpha  \theta = \theta'  \alpha$
\end{itemize}

Note that this stack is algebraic as the forgetful functor $\Nil_{\sX,n}\rightarrow \Bun_{\sX}$ is 
representable, see \cite[02ZY]{stacks-project}.

Our goal in this section is to prove that $\Nil_{n,\sX}$ is a smooth stack and find its dimension at a given $K$-point  $(E,\phi)$ for a field $K \supset k$. We give a proof that is different to the one in \cite {crelle}. The proof that we give is based upon deformation theory arguments rather than the global construction in \cite{crelle}. Part of the needed deformation theory has been stated in \cite{laumon88} using the cotangent
complex. We will work things out from scratch so that we do not need to develop the theory of the filtered 
cotangent complex.

Given a $k$-point $(\sG_0,\theta)$ of the stack $\Nil_{\sX,n}$ there is an induced descending filtration by the images of
$\theta$, that is $F^i\sG_0 = \im(\theta^i)$ so that $F^n\sG_0=0$. The sheaves $F^i\sG_0$ are all locally free
as $\sX$ is smooth and of dimension 1.

\begin{lemma}
	Consider a square zero extension of rings
	$$
	0\rightarrow I \rightarrow \tilde R \rightarrow R\rightarrow 0
	$$
	and a split surjection of $R$-modules
	$$
	\begin{tikzcd}
		M \arrow[twoheadrightarrow]{r}{p} & N \arrow[bend right]{l}[swap]{\sigma}.
	\end{tikzcd}
    $$
    Suppose that we have a lift of this diagram to a diagram of $\tilde R$-modules 
    $$
    \begin{tikzcd}
    	\tilde M \arrow[twoheadrightarrow]{r}{\tilde p} & \tilde  N \arrow[bend right]{l}[swap]{\tilde \sigma}.
    \end{tikzcd}
    $$
    Then the lift $\tilde \sigma$ can be chosen so that it splits $\tilde p$.
\end{lemma}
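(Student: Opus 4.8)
The statement says: given a square‑zero extension $0\to I\to\tilde R\to R\to 0$, a split surjection $p:M\twoheadrightarrow N$ of $R$-modules with section $\sigma$, and a lift of the \emph{surjection} $\tilde p:\tilde M\twoheadrightarrow\tilde N$ together with \emph{some} lift $\tilde\sigma$ of $\sigma$ (not necessarily a section of $\tilde p$), one can modify $\tilde\sigma$ to an honest section of $\tilde p$. The idea is to measure the failure of $\tilde\sigma$ to split $\tilde p$ by the single element $\tilde p\circ\tilde\sigma - \id_{\tilde N}\in\Hom_{\tilde R}(\tilde N,\tilde N)$, show this takes values in $I\tilde N\cong I\otimes_R N$, and then correct $\tilde\sigma$ by a suitable homomorphism $\tilde N\to I\otimes_R M$.

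First I would record the basic exact sequences: since the extension is square zero, $I$ is an $R$-module, $I\tilde M=\ker(\tilde M\to M)\cong I\otimes_R M$ (here using that $\tilde M$ is a lift, so $\tilde M$ is $\tilde R$-flat or at least that the kernel is $I\otimes_R M$ — in the deformation‑theory setting all modules in sight are locally free, so this is automatic), and similarly for $\tilde N$. Now set $u:=\tilde p\circ\tilde\sigma - \id_{\tilde N}:\tilde N\to\tilde N$. Reducing mod $I$ gives $p\circ\sigma-\id_N=0$, so $u$ factors through $\ker(\tilde N\to N)=I\otimes_R N$; thus $u\in\Hom_{\tilde R}(\tilde N,\,I\otimes_R N)=\Hom_R(N,\,I\otimes_R N)$ (a homomorphism into an $I$-killed module only sees the reduction mod $I$ of the source).

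Next, the correction step. Because $p\circ\sigma=\id_N$ over $R$, the original surjection $p$ is split, and in particular $p$ restricted to $I\otimes_R M\to I\otimes_R N$ — that is, $\id_I\otimes p$ — is a split surjection of $R$-modules with section $\id_I\otimes\sigma$. Define the new section candidate $\tilde\sigma':=\tilde\sigma - (\id_I\otimes\sigma)\circ u$, where we view $(\id_I\otimes\sigma)\circ u$ as a map $\tilde N\to I\otimes_R M\subseteq\tilde M$. This is still a lift of $\sigma$ since the correction term lands in $I\tilde M$. Then compute
\[
\tilde p\circ\tilde\sigma' = \tilde p\circ\tilde\sigma - (\id_I\otimes p)\circ(\id_I\otimes\sigma)\circ u = (\id_{\tilde N}+u) - u = \id_{\tilde N},
\]
using that $\tilde p$ restricted to $I\otimes_R M$ is $\id_I\otimes p$ and $(\id_I\otimes p)\circ(\id_I\otimes\sigma)=\id_{I\otimes_R N}$. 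Hence $\tilde\sigma'$ splits $\tilde p$, as desired.

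**Main obstacle.** There is essentially no obstacle of a deep nature here — the argument is a one‑step obstruction calculation that vanishes identically because we are correcting a \emph{choice} of lift rather than trying to lift something that might not lift. The only points requiring a little care are bookkeeping ones: (i) identifying $\ker(\tilde M\to M)$ with $I\otimes_R M$ and likewise for $\tilde N$, which is where one uses that the modules are flat/locally free (in the application $M,N$ are the essentially free modules of the previous proposition, so this is fine); and (ii) checking that the $\tilde R$-linear map $u$ into the $I$-torsion module $I\otimes_R N$ is the same data as an $R$-linear map, and that composing with $\id_I\otimes\sigma$ is legitimate. I would also remark that $\tilde\sigma'$ is unique only up to $\Hom_R(N,\ker(\id_I\otimes p))$, i.e. the set of splitting lifts is a torsor, but the statement only asks for existence, so I would not belabour this.
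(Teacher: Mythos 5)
Your proposal is correct and follows essentially the same route as the paper: measure the defect $u=\tilde p\circ\tilde\sigma-1_{\tilde N}$ as a map $N\rightarrow I\otimes N$, push it into $I\otimes M$ via the splitting $\sigma$, and subtract this correction from $\tilde\sigma$. The paper's proof is exactly this, only stated more tersely (your correction term $(\id_I\otimes\sigma)\circ u$ is the paper's morphism $f$), so there is nothing to add beyond the bookkeeping you already flag.
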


\begin{proof}
	We have a diagram
	$$
	\begin{tikzcd}
	0\arrow{r} & I\otimes  M \arrow{r} \arrow{d} & \tilde M \arrow{r} \arrow{d} & M\arrow{r} \arrow{d}& 0 \\
	0\arrow{r} & I\otimes  N \arrow{r} \arrow[shift left=4]{u} & \tilde N \arrow{r} \arrow[shift left=4]{u}{\tilde \sigma}  & N\arrow{r} \arrow[shift left=4]{u}{\sigma} & 0. 
	\end{tikzcd}
	$$
	We have that $\tilde p \circ \tilde \sigma - 1_{\tilde N}$ gives a homomorphism $N\rightarrow I\otimes N$
	as both of these maps lift $1_N$. Using the spliting this can be extended to a morphism
	$f:N\rightarrow I\otimes M$. The morphism $\tilde\sigma - f$ lifts $\sigma$ and is a splitting.
\end{proof}

\begin{proposition}
	Recall our running root stack construction $q:\sX\rightarrow X$ over a curve.
	Conisder a square zero extension of Artinian local $k$-algebras,
	$$ 
	0\rightarrow I \rightarrow B\rightarrow A\rightarrow 0.
	$$
	Consider a lift $(\sG,\theta)$ of $(\sG_0,\theta_0)$ to an $A$-point of $\Nil_{\sX,n}$. Recall that
	there is an induced filtration on $\sG$ by the images of the powers of $\theta$.
	Then there is an open cover $V_i$ of $X$ so that on $U_i:=q^{-1}(V_i)$ we have that
	\begin{enumerate}
		\item $V_i$ contains at most one orbifold point
		\item all the sheaves $F^i\sG$ are essentially free as in \ref{r:local2}, and if the
		there is no orbifold point in $U_i$ they are free
		\item there are lifts of $F^i\tsG_j$ to modules over $O_{\sX_B}|_{U_i}$ of the form described in 
		\ref{r:local2}
		so that the quotients $F^i\tsG_j/F^{i+1}\tsG$ are all flat over $B$
		\item there are lifts $\tilde \theta_i$ of $\theta|_{U_i}$ to each $F^0\tsG_i$ so that 
		$F^j\tsG_i = \im \tilde \theta_i^j$.
		\item there are isomorphims 
		$$g_{ij}: F^0\tsG_i|_{U_{ij}} \rightarrow F^0\tsG_j|_{U_{ij}}$$
		preserving the filtrations (but not $\theta$!) so that the following diagrams commute:
		$$
		\begin{tikzcd}
			0 \arrow{r} & I \otimes F^0\sG|_{U_{ij}} \arrow[equal]{d} \arrow{r }& F^0\tsG_i|_{U_{ij}} \arrow{r} \arrow{d}{g_{ij}}& F^0\sG|_{U_{ij}} \arrow{r} \arrow[equal]{d} & 0 \\
			0 \arrow{r} & I \otimes F^0\sG|_{U_{ij}} \arrow{r} & F^0\tsG_g|_{U_{ij}} \arrow{r} & F^0\sG|_{U_{ij}} \arrow{r}  & 0. \\	
		\end{tikzcd}
		$$
		\end{enumerate}
\end{proposition}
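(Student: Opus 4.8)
The plan is to prove the six assertions essentially by a local argument near each point of $X$, combined with the already-established deformation-theoretic inputs. First I would choose the cover $V_i$ of $X$ so that each $V_i$ contains at most one orbifold point and so that, over each $V_i$, the coarse space line bundles occurring are trivial; this is possible since $X$ is a curve and the orbifold locus is finite, and it is exactly the situation set up in \ref{r:local2}. This gives item (1) immediately, and over $U_i = q^{-1}(V_i)$ we have the presentation $U_i \cong [\Spec(R_i[t]/\langle t^e - s\rangle)/\mu_e]$ (with $e=1$ and the action free when there is no orbifold point).

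For item (2), the point is that $\sX$ is regular of dimension one, so each image sheaf $F^i\sG_0 = \im(\theta_0^i)$ is locally free on $\sX$ over the closed point, and hence — after possibly shrinking the $V_i$ — the deformed sheaves $F^i\sG$ are locally free on $\sX_A$; locally they are then projective graded modules over $R_i[t]/\langle t^e-s\rangle$, which become free after further shrinking, i.e.\ essentially free in the sense of \ref{d:essentiallyFree} (and genuinely free when there is no orbifold point). For items (3) and (4) I would argue inductively down the filtration: near a point of $U_i$ the endomorphism $\theta|_{U_i}$ of the essentially free module $F^0\sG|_{U_i}$ can be put in a normal form compatible with the $\mu_e$-grading, its powers giving the $F^j$; then one lifts the underlying free graded module to $R_{i,B}[t]/\langle t^e-s\rangle$ and lifts $\theta$ together with it, choosing the lift $\tilde\theta_i$ so that $\tilde\theta_i^j$ has image exactly $F^j\tsG_i$. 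Flatness of the successive quotients $F^i\tsG_i/F^{i+1}\tsG_i$ over $B$ is then automatic because, in these local coordinates, they are free $B$-modules; here I would invoke the splitting lemma just proved (the one about square-zero extensions and split surjections) to arrange that the inclusions $F^{i+1}\hookrightarrow F^i$ stay split after lifting, which is what keeps the graded pieces flat.

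Item (5) is the standard Čech comparison step: any two essentially free lifts of the same bundle over $U_i$ become isomorphic after restriction to a smaller set, and since $F^0\tsG_i$ and $F^0\tsG_j$ both restrict over $U_{ij}$ to lifts of $F^0\sG|_{U_{ij}}$ through the square-zero extension, there is an isomorphism $g_{ij}$ between them reducing to the identity on $F^0\sG|_{U_{ij}}$, i.e.\ making the displayed diagram commute. The only subtlety is that $g_{ij}$ should respect the filtrations $F^\bullet$ but need not intertwine the $\tilde\theta$'s — this is fine, because the filtration on each side is an honest subsheaf filtration by essentially free pieces (established in (2)–(3)), and one can build $g_{ij}$ one filtration step at a time using the splitting lemma, again choosing lifts of the splittings of $F^{j+1}\hookrightarrow F^j$ so that the isomorphism is compatible with the inclusions.

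The main obstacle, I expect, is item (3) combined with the flatness requirement on the quotients: lifting $\sG$ alone is cheap, but lifting \emph{the whole filtration by images of $\theta$} while keeping all graded pieces $B$-flat is the real content, and it is precisely what forces us to work with the filtered/graded local picture rather than with $(\sG,\theta)$ abstractly. Everything else is a routine unwinding of \ref{r:local2}, the normal form of a nilpotent endomorphism over a local ring, and repeated application of the square-zero splitting lemma; no global cohomological input is needed at this stage, since we are only asserting the existence of a good local cover, and the obstruction/automorphism analysis is deferred to the subsequent filtered-deformation computation.
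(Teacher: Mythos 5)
Your reduction to the local picture of \ref{r:local2}, and your identification of item (3) --- lifting the whole image filtration with $B$-flat quotients --- as the crux, both match the paper; but the mechanism you propose for (3)--(5) has a genuine gap. There is no ``normal form of a nilpotent endomorphism over a local ring'' of the kind you invoke: Jordan-type normal forms exist over fields, not over $\sO_{X,p}$ or $R_A[t]/(t^e-s)$. For instance, on a free module of rank two over a local ring $R$ with nonunit $s$, the nilpotent map $\theta(x,y)=(sy,0)$ is not conjugate to any constant block form: its image is $sR\subset R$ and its cokernel has torsion, whereas a standard block has free image and cokernel. This torsion phenomenon is exactly what the proposition must accommodate (the graded pieces $F^i\sG/F^{i+1}\sG$ are only flat over $A$, not locally free in the curve direction; cf.\ \ref{torsion} and the statement of \ref{t:dimension}), so an argument that begins by putting $\theta$ in normal form and lifting it ``together with'' a free graded module cannot get started. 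Note also that an entrywise lift of $\theta$ with $\theta^n=0$ need not satisfy $\tilde\theta^n=0$, and there is no reason its powers should have image equal to a prescribed flat lift of $F^j\sG$ --- arranging that is the actual content of item (4), and your proposal restates it rather than proving it.

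Relatedly, you apply the square-zero splitting lemma to the wrong maps: you want to lift splittings of the inclusions $F^{i+1}\sG\hookrightarrow F^i\sG$, but these are not split in general (same example: $sR\subset R$ admits no retraction near the zero locus of $s$, and no shrinking of a cover of $X$ removes such points). What is split, locally, are the surjections $\theta\colon F^{i}\sG\twoheadrightarrow F^{i+1}\sG$ between essentially free modules, and that is how the paper proceeds: it lifts the filtration step by step by taking preimages and proves flatness of the lifted quotients via the local criterion of flatness (citing Le Potier), then lifts $\theta$ by descending induction --- an arbitrary lift $\tau$ is surjective by Nakayama, hence split, the splittings are chosen compatibly using the square-zero lemma, and the discrepancy between the two induced lifts, a map into $F^{n-i+1}\tsG_j\otimes I$, is extended through the splitting and used to correct $\tau$. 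The gluing isomorphisms of item (5) are likewise built by descending induction from these split surjections, whose kernels are free of constant rank, not from splittings of the inclusions. Consequently your claim that flatness of $F^i\tsG_j/F^{i+1}\tsG_j$ over $B$ is ``automatic because they are free $B$-modules'' begs the question: that flatness is precisely what must be proved, and your route to it rests on splittings and normal forms that do not exist.
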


\begin{proof}
	The first two assertions are straightforward. The third would be easy except it is not perfectly clear how to ensure flatness. To do this we induct on the filtration. Suppose that we have lifted 
	$F^{i}\sG_j$ to $F^{i}\tilde\sG_j$. Then we have a diagram
	$$
	\begin{tikzcd}
		  &                                &                              &  F^{i+1}\sG_j \arrow[dashed]{dl} \arrow{d} \\
		0\arrow{r} & I\otimes F^i\sG_j/F^{i+1}\sG_j \arrow{r} & F^i\tilde \sG_j/F^{i+1}\sG_j \arrow{r} & F^i\sG_j \arrow{r}& 0 \\
	\end{tikzcd}
	$$
	The dotted arrow exists as $F^{i+1}\sG_j$ is of the form described in \ref{r:local2}. 
	The lift is construced by taking its preimage under the map $F^i\sG_j\rightarrow F^i\sG_j/F^{i+1}\sG_j$.
	The local criteria for flatness ensures the flatness of the quotient. The details are worked out on
	\cite[page 127]{lepotier}.

    To construct the local lift of $\theta$ we can proceed by a descending induction on the filtration.
    It is clear how to lift $\theta$ to $F^{n-1}\tsG_j$. For the induction step, choose an arbitrary
    lift of $\theta$ to a surjection
    $\tau: F^{n-i}\tsG_j\rightarrow F^{n-i+1}\tsG_j $. Any lift of $\theta$ is surjective by Nakayama and
    as all the modules are, surjections will be split. In particular, the vertical arrows in the diagram
    $$
    \begin{tikzcd}
    	F^{n-i}\tsG_j \arrow{r} \arrow[two heads]{d}{\tau}   & \F^{n-i}\sG|_{U_j} \arrow[two heads]{d} \\
    	F^{n-i+1}\tsG_j \arrow{r}                 & \F^{n-i+1}\sG|_{U_j}.  \\   	
    \end{tikzcd}
    $$
    By the previous lemma the splittings can be chosen in a compatible way.
    We have an inclusion $ F^{n-i+1}\tsG_j \subseteq F^{n-1}\tsG_j$ which gives two
    lifts of $\theta$ to $ F^{n-i+1}\tsG_j$. First is the lift from the inclusion and $\tau$ and there
    is the previously constructed lift in the inductive step.
     These differ by a morphism
    $$
    \lambda: F^{n-i+1}\tsG_j\rightarrow F^{n-i+1}\tsG_j\otimes I.
    $$
    The morphism can be extended to $F^{n-i}\tsG_j$ via the splitting. We keep the notation $\lambda$ for
    the extension. The required inductive step is obtained by considering $\tau+\lambda$.
    
    To obtain the isormophisms, once again proceed by descending induction on the filtration. Notice
    that we may extend as we have split surjections $F^{i}\twoheadrightarrow F^{i+1}$ and the kernels 
    are free of the same rank on each open set.
\end{proof}

Let $(\sE_0,\theta_0)$ be a $k$-point of $\Nil_{n,\sX}$. We view $\sE_0$ as a sheaf filtered by the images
of $\theta$, that is 
$F^i\sE_0 = \im(\theta_0^i)$. Consider the filtered complex
\[
P(\sX,\sE_0,\theta_0)_{\fil} := [ \sEnd(\sE_0)\stackrel{[\theta_0,-]}{\rightarrow} \sHom_{\fil}(\sE_0,\sE_0(1)) ],
\]
concentrated in degrees 0 and 1. Some words are in order regarding the differential. We view 
$\theta_0$ as a filtered morphisms $\sE_0\rightarrow \sE_0(1)$ so that post composition with $\theta_0$ is
obviously defined. Note that $\theta_0$ also gives a morphism $\sE_0(-1)\rightarrow \sE_0$ so that if 
$\phi$ is a section of the degree 0 sheaf then $\phi\circ \theta_0 :\sE_0(-1)\rightarrow \sE_0$. 
We can shift filtration again to obtain the required morphism. In summary,
$$
[\theta_0,\phi ]:= \theta_0\phi - \phi\theta_0 ,
$$
suitably interpreted.

\begin{theorem}
	We preserve the notation above. Consider a square zero extension 
	$$
	0\rightarrow I \rightarrow B\rightarrow A \rightarrow 0
	$$
	of artinian local $k$-algebras. Suppose we have a lift of $(\sE_0,\theta_0)$ to an $A$-point
	$(\sE,\theta)$ of $\Nil_{n,\sX}$. Then:
	\begin{enumerate}
		\item There is an obstruction to lifting  	$(\sE,\theta)$
	to a $B$-point of $\Nil_{n,\sX}$ in $H^2(\sX, F^0(P\otimes_k I))$. 
		\item If the obstruction vanishes, the space of lifts is a torsor under
		$H^1(\sX, F^0(P\otimes_k I))$.
		\item The automorphism group of a lift is identified with
		$H^0(\sX, F^0(P\otimes_k I))$.
	\end{enumerate}
\end{theorem}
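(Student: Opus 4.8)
The plan is to rerun the Cech-cohomology deformation argument of Theorem~\ref{t:usualdef}, now carrying the filtrations along and using the preceding proposition as the source of compatible local data. Fix the open cover $\{U_i = q^{-1}(V_i)\}$ of $\sX$ it provides, together with: the essentially free local lifts $F^\bullet\tsG_i$ of $F^\bullet\sE|_{U_i}$, with $F^j\tsG_i/F^{j+1}\tsG_i$ flat over $B$; the local lifts $\tilde\theta_i$ of $\theta|_{U_i}$ with $F^j\tsG_i = \im\tilde\theta_i^j$; and the filtration-preserving gluing isomorphisms $g_{ij}\colon F^0\tsG_i|_{U_{ij}} \to F^0\tsG_j|_{U_{ij}}$ fitting into the commutative diagrams there. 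The two structural facts that make everything land in the filtered complex $F^0(P\otimes_k I)$ are that the $g_{ij}$ preserve the filtration and that each $\tilde\theta_i$, like $\theta_0$, raises the filtration by exactly one (since $F^j\tsG_i = \im\tilde\theta_i^j$).

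First I would form the Cech cochains exactly as in Theorem~\ref{t:usualdef}: on triple overlaps $c_{ijk} := 1 - g_{ik}^{-1}g_{jk}g_{ij}$, and on double overlaps $\tau_{ij} := g_{ij}\tilde\theta_i g_{ij}^{-1} - \tilde\theta_j$. Because the $g_{ij}$ preserve the filtration, $c_{ijk}$ is a filtration-preserving $I$-valued endomorphism of $\sE_0|_{U_{ijk}}$, i.e.\ a section of the degree-$0$ term of $F^0(P\otimes_k I)$; and because $g_{ij}\tilde\theta_i g_{ij}^{-1}$ and $\tilde\theta_j$ both raise the filtration by one, $\tau_{ij}$ is a filtration-raising $I$-valued homomorphism of $\sE_0|_{U_{ij}}$, i.e.\ a section of the degree-$1$ term. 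The cocycle identities $\partial(c_{ijk}) = 0$ and $\partial(\tau_{ij}) = \pm[\theta_0,c_{ijk}]$ hold verbatim by the computations in Theorem~\ref{t:usualdef}, so $(c_{ijk},\tau_{ij})$ determines a class in $H^2(\sX, F^0(P\otimes_k I))$: the obstruction. If it vanishes, write it as $\partial$ of a $1$-cochain $(b_{ij},\sigma_i)$ with $b_{ij}$ filtration-preserving and $\sigma_i$ filtration-raising, replace $g_{ij}$ by $g_{ij}+b_{ij}$ and $\tilde\theta_i$ by $\tilde\theta_i+\sigma_i$; the new $g_{ij}$ still preserve the filtration, and since $\sigma_i$ lands in the square-zero ideal, $(\tilde\theta_i+\sigma_i)^j \equiv \tilde\theta_i^j \pmod{I}$, so $F^j\tsG_i = \im(\tilde\theta_i+\sigma_i)^j$ by Nakayama. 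The modified data glues to a global $(\tsG,\tilde\theta)$ with $F^j\tsG = \im\tilde\theta^j$ whose graded pieces are flat over $B$, being so locally; hence a $B$-point of $\Nil_{n,\sX}$. Conversely a global lift, compared with the $\tsG_i$, exhibits the cocycle as a coboundary. This establishes (1).

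For (3), I would note that an automorphism of a lift $(\tsG,\tilde\theta)$ restricting to the identity on $(\sE,\theta)$ has the form $1+\beta$ with $\beta\colon \sE_0\to\sE_0\otimes_k I$, and $(1+\beta)\tilde\theta=\tilde\theta(1+\beta)$ reduces modulo the square-zero ideal to $[\theta_0,\beta]=0$; this identity forces $\beta(F^j\sE_0)=\theta_0^j\beta(\sE_0)\subseteq F^j\sE_0$, so $\beta$ is a filtered section killed by the degree-$0$ differential of $F^0(P\otimes_k I)$, i.e.\ an element of $H^0(\sX,F^0(P\otimes_k I))$; conversely any such $\beta$ gives an automorphism, and the assignment is a group isomorphism. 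Then (2) follows formally from (3), just as in Theorem~\ref{t:usualdef}: on each $U_i$ all lifts are essentially free and hence isomorphic, so two global lifts differ by local isomorphisms whose discrepancies on overlaps form a $1$-cocycle of $F^0(P\otimes_k I)$, well defined up to coboundaries coming from local automorphisms; adding an arbitrary $1$-cocycle to the gluing data of a fixed lift realizes every lift, so the set of lifts is a torsor under $H^1(\sX,F^0(P\otimes_k I))$.

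The one substantive point beyond Theorem~\ref{t:usualdef} — and the step I expect to need the most care — is the bookkeeping of flatness of the graded pieces: checking that the cochains $(c_{ijk},\tau_{ij})$ and the correcting cochains $(b_{ij},\sigma_i)$ genuinely lie in the subcomplex $F^0 P$ (so the invariants are governed by $H^\bullet(\sX,F^0(P\otimes_k I))$ and not the coarser unfiltered complex), and that after correcting by $(b_{ij},\sigma_i)$ the powers of $\tilde\theta_i$ still cut out the filtration with $B$-flat quotients. Both reduce to the two structural facts above together with the flatness already built into the preceding proposition; once those are invoked, the remaining work is the same diagram chasing as in Theorem~\ref{t:usualdef}.
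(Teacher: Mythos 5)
Your proposal is correct and follows essentially the same route as the paper, which proves the theorem exactly by modifying the \v{C}ech argument of Theorem \ref{t:usualdef} using the local filtered lifts, filtration-compatible $\tilde\theta_i$, and filtration-preserving gluing maps supplied by the preceding proposition. Your write-up simply makes explicit the bookkeeping (cochains landing in $F^0(P\otimes_k I)$, flatness after correction) that the paper declares straightforward.
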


\begin{proof}
Most of the work has already been done in the prior proposition. One just needs to modify \ref{t:usualdef}
using the prior proposition. This is now straightforward.
\end{proof}

\begin{theorem}
	The obstruction constructed above vanishes so that the stack $\Nil_{n,\sX}$ is smooth.
\end{theorem}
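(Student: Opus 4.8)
The plan is to prove that the obstruction group $H^2(\sX, F^0(P\otimes_k I))$ vanishes for every small extension, where $P=P(\sX,\sE_0,\theta_0)_{\fil}$. Since $I$ is a finite-dimensional $k$-vector space, $F^0(P\otimes_k I)\cong (F^0P)\otimes_k I$ is a finite direct sum of copies of $F^0P$, so it suffices to show $H^2(\sX,F^0P)=0$.

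First I would record that $\sX$ has cohomological dimension one for coherent sheaves: by \ref{l:exact} the coarse map $q\colon\sX\to X$ has $q_*$ exact, so $H^i(\sX,\sF)=H^i(X,q_*\sF)$ vanishes for $i\ge 2$ because $X$ is a curve; in particular $H^2(\sX,-)=0$ on coherent sheaves and $H^1(\sX,-)$ is right exact. Now $F^0P$ is the two-term complex of coherent sheaves
$$F^0\sEnd(\sE_0)\;\xrightarrow{\;[\theta_0,-]\;}\;F^0\sHom_{\fil}(\sE_0,\sE_0(1))$$
in degrees $0$ and $1$, where $F^0\sEnd(\sE_0)$ is the sheaf of filtration-preserving endomorphisms of $\sE_0$ and $F^0\sHom_{\fil}(\sE_0,\sE_0(1))$ the sheaf of endomorphisms raising the filtration by one. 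From the long exact hypercohomology sequence of this two-term complex together with the vanishing of $H^2$ on coherent sheaves one gets $H^2(\sX,F^0P)=\coker\bigl(H^1(\sX,F^0\sEnd(\sE_0))\to H^1(\sX,F^0\sHom_{\fil}(\sE_0,\sE_0(1)))\bigr)$, and since $H^1(\sX,-)$ is right exact this cokernel is naturally identified with $H^1\bigl(\sX,\coker([\theta_0,-])\bigr)$.

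So it remains to show $H^1(\sX,\coker([\theta_0,-]))=0$, for which it is enough that $\coker([\theta_0,-])$ be a torsion sheaf, i.e. that $[\theta_0,-]$ be surjective at the generic point $\eta$ of $\sX$. Localization is exact, so $(\im(\theta_0^i))_\eta=\im(\theta_{0,\eta}^i)$, and at $\eta$ the map becomes
$$[\theta_{0,\eta},-]\colon\;\{g\in\End(V)\mid gF^i\subseteq F^i\}\;\longrightarrow\;\{h\in\End(V)\mid hF^i\subseteq F^{i+1}\},$$
where $V=\sE_{0,\eta}$ is a finite-dimensional $k(\sX)$-vector space, $\theta_{0,\eta}$ is nilpotent, and $F^i=\im(\theta_{0,\eta}^i)$. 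This is a purely linear-algebra statement, which I would prove by choosing a Jordan basis for $\theta_{0,\eta}$: the natural splitting $V=\bigoplus_j V_j$ of the image filtration makes $\theta_{0,\eta}$ homogeneous of degree $+1$ with each component $\bar\theta_j\colon V_j\to V_{j+1}$ surjective, and then the equations $\bar\theta_{k-1}g_{j,k-1}-g_{j+1,k}\bar\theta_j=h_{jk}$ (for $j<k$) expressing surjectivity can be solved by induction on $k-j$, since over a field every surjection of vector spaces splits. Hence $\coker([\theta_0,-])$ is torsion, so $H^1$ of it vanishes, and therefore $\Nil_{n,\sX}$ is smooth.

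The main obstacle is the linear-algebra fact in the last step: this is precisely where the hypotheses that $\theta_0$ be nilpotent and that the filtration be the image filtration are genuinely used. For an arbitrary endomorphism the analogous commutator map need not be generically surjective — for a regular semisimple one its cokernel is the diagonal torus, which is not torsion — and correspondingly the stack of bundles with an arbitrary endomorphism is not smooth. A secondary technical point is the identification of the generic stalk of $F^0\sEnd(\sE_0)$ with the filtration-preserving endomorphisms of the generic fibre, which is just exactness of localization but is the reason to argue at $\eta$ rather than fibrewise, since the steps $\im(\theta_0^i)$ need not have locally free quotient on all of $\sX$.
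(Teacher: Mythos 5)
Your proof is correct, and after the first (shared) step it takes a genuinely different route from the paper's. Both arguments begin with the hypercohomology long exact sequence of the two-term complex $F^0P$ together with the fact that $\sX$ has cohomological dimension one (exactness of $q_*$ plus $X$ a curve), reducing the vanishing of the obstruction space to the statement that $H^1(\sX, F^0\sEnd_{\fil}(\sE_0))\to H^1(\sX, F^0\sHom_{\fil}(\sE_0,\sE_0(1)))$ has trivial cokernel. The paper then stays global: it passes to $R\sHom$ in the filtered derived category, invokes Illusie's identification of the graded pieces with $\Ext^1(\gr^i\sE,\gr^{i+\alpha}\sE)$, and solves a triangular system of extension classes by induction, using that $\theta:\gr^{i-1}\sE\twoheadrightarrow\gr^i\sE$ is surjective and that $\Ext^2$ vanishes on a curve. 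You instead stay at the level of sheaves: right-exactness of $H^1$ identifies the cokernel with $H^1(\sX,\coker([\theta_0,-]))$, which vanishes because the commutator map is surjective at the generic point, so its cokernel is torsion. Your key lemma is then pure linear algebra over the function field, proved with a Jordan basis; this is exactly where nilpotency and the image filtration enter, just as in the paper's induction, but your version avoids the filtered derived category and the comparison of graded $R\sHom$'s entirely, which makes it more elementary (at the price of being tied to the curve case, where "torsion cokernel" kills $H^1$). One small point of care in the linear-algebra step: for a fixed value of $k-j$ the equations form a chain, and they must be solved in the right order — fix the last unknown and descend in $j$, solving each equation for $g_{j,k-1}$, i.e. for the component post-composed with the surjection $\bar\theta_{k-1}$, via a chosen section; trying instead to solve for the pre-composed component $g_{j+1,k}$ would require a factorization through $\bar\theta_j$ that need not exist. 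Your displayed equations and your appeal to splittings of the surjections $\bar\theta_j$ indicate this is what you intend, so this is a presentational remark rather than a gap.
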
	

\begin{proof}
	We have an long exact sequence
	$$
	\rightarrow H^1(\sX, F^0\sHom_{\fil}(\sE,\sE(1))\otimes I)\rightarrow H^2(\sX, F^0(P\otimes I))
	\rightarrow H^2(\sX, F^0\sEnd_{\fil}(\sE)\otimes I)\rightarrow .
	$$
	The last term vanishes so it suffices to show that the map 
	$$
	[\theta, -]:H^1(\sX, F^0\sEnd_{\fil}(\sE)\otimes I)\rightarrow H^1(\sX, F^0\sHom_{\fil}(\sE,\sE(1))\otimes I)
	$$
	is surjective. Now, as $\sE$ is locally free and hence dualisable,  one has 
	$$
	H^1(\sX, F^0(\sEnd_{\fil}(\sE))) \cong H^1(\sX, F^0 R\sHom(\sE,\sE))
	$$
	and a similar result holds for the other cohomology group. Now to prove surjectivity, by the five lemma, it
	suffices to show that 
	$$
	H^1(\sX, \gr^i R\sEnd_{\fil}(\sE)\otimes I)\rightarrow H^1(\sX, \gr^i R\sHom_{\fil}(\sE,\sE(1))\otimes I)
	$$
	for $i>0$.
	By \cite[Ch. V 1.4.8.1]{illusie} and the discussion in \ref{ss:filtered}, we have 
	$$H^1(\sX, \gr R\sEnd_{\fil}(\sE)\otimes I) \cong \Ext^1(\gr \sE, \gr\sE )
	\quad\text{and}\quad
	H^1(\sX, \gr R\sHom_{\fil}(\sE,\sE(1))\otimes I) \cong \Ext^1(\gr \sE, \gr\sE )
	$$
	The $\alpha$th associated graded pieces of the morphism $[\theta,-]$ look like
	$$
	\begin{tikzcd}
		\bigoplus_i\Ext^1(\gr^i \sE, \gr^{i+\alpha}\sE) \arrow{r}{\theta^*} \arrow{d}{\theta_*} & \bigoplus_i\Ext^1(\gr^{i-1}\sE , \gr^{i+\alpha}\sE) \\
		\bigoplus_i \Ext^1(\gr^i \sE, \gr^{i+\alpha+1}\sE). & 
	\end{tikzcd}
	$$
	with $\bigoplus_i \Ext^1(\gr^i \sE, \gr^{i+\alpha+1}\sE) = \gr^\alpha R\sHom_{\fil}(\sE,sE(1))$.
	Given a class $(x_i)\in \bigoplus_i \Ext^1(\gr^i \sE, \gr^{i+\alpha+1}\sE)$ we proceed by
	 induction on $i$ to construct a preimage under $[\theta,-]$ of this class.
	Let us call our constructed preimage to be $(y_i)$ with $y_i\in \Ext^1(\gr^i \sE, \gr^{i+\alpha}\sE)$.
	We can take $y_0=0$ and then $y_1$ can be chosen as 
	$$
	\theta: \gr^{i-1}\sE \twoheadrightarrow \gr^i\sE.
	$$ 
	Suppose that we have chosen $y_0,\ldots , y_k$. To construct the next extension class, consider the
	difference $x_{k+1}-\theta_*(y_k)$ and argue as above.
	
\end{proof}

\begin{theorem}\label{t:dimension}
	The stack $\Nil_{n,\sX}$ is smooth over $k$. Its dimension at the $K$-valued point given by a coherent sheaf $\sE$ on $\sX_{K}$ and $\theta \in \End(\sE)$ with $\theta^{n} = 0$ is 
	
	$$ \dim_{(\sE,\theta)}(\Nil_{n, \sX}) = (g-1) \sum_{i=1}^{n} r_{i}^{2} + \sum_{i=1}^{n} \sum_{j } \dim_{K} \Flag_{\bn_{j}^{(i)}}\left( \frac{\im\theta^{i-1}}{\im\theta^{i}}|_{p_j} \right) ,
	$$
	where $r_{i}$ denotes the rank of the  coherent sheaf $\im(\theta^{i-1})/\im(\theta^{i})$  and $\bn_{j}^{(i)}$ the parabolic datum of $\frac{\im\theta^{i-1}}{\im\theta^{i}}$ at $p_{j}$.
	
	For parabolic structures on coherent sheaves, potentially with torsion, recall remark \ref{torsion}.
\end{theorem}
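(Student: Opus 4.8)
The plan is to read the dimension off from the deformation theory just developed. The two preceding theorems show that $\Nil_{n,\sX}$ is smooth and that at $(\sE,\theta)$ its first-order deformations, obstructions and infinitesimal automorphisms are the hypercohomology groups $H^1$, $H^2$ and $H^0$ of $F^0P$, where $P=P(\sX,\sE,\theta)_{\fil}=[\sEnd(\sE)\xrightarrow{[\theta,-]}\sHom_{\fil}(\sE,\sE(1))]$ is concentrated in degrees $0$ and $1$. Hence
$$
\dim_{(\sE,\theta)}\Nil_{n,\sX}=\dim_K H^1(\sX,F^0P)-\dim_K H^0(\sX,F^0P).
$$
The computation in the smoothness proof shows that $[\theta,-]$ is surjective on $H^1$, and since $\sX$ is a curve $H^{\ge 2}$ of any coherent sheaf vanishes; the long exact sequence of the mapping cone that computes $F^0P$ then gives $H^2(\sX,F^0P)=0$ and $H^{\ge 3}(\sX,F^0P)=0$. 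Therefore the dimension equals $-\chi(\sX,F^0P)$, and the whole problem becomes an Euler-characteristic calculation.

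Next I would compute $\chi(\sX,F^0P)=\chi(\sX,F^0\sEnd(\sE))-\chi(\sX,F^0\sHom_{\fil}(\sE,\sE(1)))$ by passing to the associated graded of the image filtration $F^i\sE=\im\theta^i$, exactly as in the smoothness proof. Using Lemma \ref{l:gr}, the discussion of \S\ref{ss:filtered}, and the identification of $\gr$ of a filtered $R\sHom$ with $R\sHom$ of the graded, the graded pieces for $\alpha\ge 0$ are $\gr^{\alpha}F^0\sEnd(\sE)\cong\bigoplus_i R\sHom(\gr^i\sE,\gr^{i+\alpha}\sE)$ and $\gr^{\alpha}F^0\sHom_{\fil}(\sE,\sE(1))\cong\bigoplus_i R\sHom(\gr^i\sE,\gr^{i+\alpha+1}\sE)$. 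Additivity of $\chi$ over a filtration gives $\chi(\sX,F^0\sEnd(\sE))=\sum_{i\le j}\chi(\gr^i\sE,\gr^j\sE)$ and $\chi(\sX,F^0\sHom_{\fil}(\sE,\sE(1)))=\sum_{i<j}\chi(\gr^i\sE,\gr^j\sE)$; subtracting, every off-diagonal term cancels and
$$
\chi(\sX,F^0P)=\sum_i\chi(\gr^i\sE,\gr^i\sE).
$$

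Finally I would apply Lemma \ref{l:coherent} to each coherent sheaf $\gr^i\sE=\im\theta^i/\im\theta^{i+1}$ — which on the smooth curve $\sX$ may carry torsion, handled by remark \ref{torsion} — to get $\chi(\gr^i\sE,\gr^i\sE)=(1-g)\rk(\gr^i\sE)^2-\sum_j\dim_K\Flag_{\bn_j}(\gr^i\sE|_{p_j})$. Summing over $i$, reindexing $i\mapsto i-1$ so that $\gr^{i-1}\sE=\im\theta^{i-1}/\im\theta^i$ matches the statement, and negating yields the asserted formula. The case of parabolic structures on coherent sheaves with torsion needs nothing extra: Lemma \ref{l:coherent} already absorbs it, with Lemma \ref{alt} ensuring the torsion summands do not affect $\chi$.

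The step I expect to be the main obstacle is the middle one: keeping precise track of how the Tate twist $\sE\mapsto\sE(1)$ shifts the image filtration, so that $F^0\sHom_{\fil}(\sE,\sE(1))$ collects the homomorphisms that \emph{strictly} raise filtration degree while $F^0\sEnd(\sE)$ collects the non-strict ones — this asymmetry by one is exactly what leaves the diagonal Euler characteristics $\chi(\gr^i\sE,\gr^i\sE)$ uncancelled — together with a careful appeal to the filtered derived category of \cite[Ch. V]{illusie} to justify the additivity of $\chi$ along $F^0P$ rather than arguing it informally.
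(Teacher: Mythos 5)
Your proposal is correct and follows essentially the same route as the paper: the preceding deformation-theoretic theorems reduce the dimension to an Euler characteristic of $F^0P(\sE,\theta)$, the passage to the associated graded cancels all off-diagonal terms leaving $\sum_i\chi(\gr^i\sE,\gr^i\sE)$, and \ref{p:shom} together with \ref{l:coherent} gives the stated formula. Your write-up is in fact somewhat more careful than the paper's (explicit $H^2$-vanishing and sign bookkeeping, and the explanation of why $F^0\sEnd$ versus $F^0\sHom_{\fil}(\sE,\sE(1))$ leaves exactly the diagonal), but it is the same argument.
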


\begin{proof}
	By the above, we have that the dimension is given by 
\begin{eqnarray*}
	\chi(F^0(P(\sE,\theta))) &=& \chi (F^0\sEnd(\sE)) - \chi(F^0\sHom(\sE,\sE(1)))\\
	&=& \sum_{i\ge 0} \dim_k \Hom(\gr^i \sE, \gr^i\sE) - \dim_k
	\Ext^1(\gr^i \sE, \gr^i\sE). \\
\end{eqnarray*}
 The result follows from
	\ref{p:shom} and \ref{l:coherent}.
\end{proof}

\begin{lemma}
	Let $\sC$ be the closure of a  point by $E$ in $\Bun^{r,d}_{\bn}$, then 
	\begin{align*}
	\dim_{k} {\sC} = \trdeg_{k}(k(E)) - \dim_{K} \End(E)
	\end{align*}
\end{lemma}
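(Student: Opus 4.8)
The plan is to identify $\sC$, up to a dense open substack, with the residual gerbe $\fG(E)$ of the point determined by $E$, and then to compute the dimension of that gerbe in terms of the automorphism group of $E$.

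Since $\Bun^{r,d}_{\bn}$ is an algebraic stack, locally of finite type over $k$, the $K$-point given by $E$ determines a point of the underlying topological space whose residue field is, by definition, the field of moduli $k(E)$, and which admits a residual gerbe $\fG(E)\hookrightarrow \Bun^{r,d}_{\bn}$; this is a locally closed substack with coarse moduli space $\Spec k(E)$. The closed substack $\sC$ is precisely the closure of this point, so $\sC$ is the (reduced) closure of $\fG(E)$. As a locally closed substack is open and dense in its closure, we get
\[
\dim_k \sC = \dim_k \fG(E).
\]

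Next I compute $\dim_k \fG(E)$. The gerbe $\fG(E)$ is banded, over its coarse space $\Spec k(E)$, by the automorphism group scheme $\underline{\Aut}(E)$ (an inner form over $k(E)$ of the automorphism group of the parabolic bundle $E$; the two become isomorphic after base change to $K$, over which $\fG(E)$ acquires the section $E$). For any gerbe $\sG\to S$ banded by a group scheme $G/S$ one has $\dim\sG=\dim S-\dim G$, so
\[
\dim_k\fG(E)=\trdeg_k k(E)-\dim \underline{\Aut}(E).
\]
Because the dimension of a group scheme of finite type over a field is unchanged by field extension, $\dim\underline{\Aut}(E)$ may be computed after base change to $K$. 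There, by the parabolic--orbifold correspondence \ref{t:correspondence}, an automorphism of $E$ is an invertible element of the finite-dimensional $K$-algebra $\End(E)$, so $\underline{\Aut}(E)_K$ is the unit group $\End(E)^{\times}$, which is a nonempty (it contains the identity) Zariski-open subscheme of the affine space over $K$ underlying $\End(E)$; hence $\dim\underline{\Aut}(E)=\dim_K\End(E)$. Combining the displayed formulas yields $\dim_k\sC=\trdeg_k k(E)-\dim_K\End(E)$.

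Equivalently one could bypass residual gerbes and invoke directly the formula $\dim\overline{\{x\}}=\trdeg_k\kappa(x)-\dim\Aut(x)$ for a point $x$ of an algebraic stack locally of finite type over $k$ (checked, say, on a smooth atlas, or tested against $\sM=\AA^n$ and $\sM=BG$). The main obstacle is foundational rather than computational: one must know that the residual gerbe of $E$ in $\Bun^{r,d}_{\bn}$ exists, is a locally closed immersion, and has coarse space exactly $\Spec k(E)$ — standard because every point of $\Bun^{r,d}_{\bn}$ has a quasi-compact open neighbourhood of finite type over $k$, but it is the step that genuinely needs care. The remaining inputs are the elementary dimension count for gerbes and the identification of $\underline{\Aut}(E)$ with the unit group of $\End(E)$.
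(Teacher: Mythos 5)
Your proposal is correct and follows essentially the same route as the paper: both identify $\sC$ as the closure of the residual gerbe $\fG(E)$, note that this gerbe (resp.\ its preimage in an atlas) is dense, and then compute $\dim_k\sC=\trdeg_k k(E)+\dim_{k(E)}\fG(E)$ with $\dim_{k(E)}\fG(E)=-\dim\Aut(E)=-\dim_K\End(E)$, since $\Aut(E)$ is the unit group, open in the affine space underlying $\End(E)$. The only difference is bookkeeping: the paper presents $\sC$ locally as a quotient stack $[U/H]$ and runs the dimension count on the atlas $U$ and the open dense locus $R=\sG\times_{\sC}U$, whereas you argue intrinsically via the band of the gerbe; the foundational points you flag (existence and local closedness of the residual gerbe, the dimension formula for a point of a stack) are exactly the ones the paper also takes for granted.
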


\begin{proof}
	The stack $\Bun^{r,d}_{\bn}$ is locally a quotient stack as it is for ordinary vector bundles. One way to 
	prove this is to observe that the map forgetting the parabolic structure is representable in flag varieities.
	Hence,
	we may assume $\sC$ to be a quotient stack $[U/H]$ for some scheme $U$ and some algebraic group $H$ . Let $\sG \hookrightarrow \sC$ be the residual gerbe. We have the following Cartesian square

	\begin{center}
		\begin{tikzpicture}
		\matrix (m) [matrix of math nodes,row sep=2em,column sep=4em,minimum width=2em] {
			R & U \\
			\sG  & \sC \\ 
		};
		\path[->] (m-1-1) edge (m-1-2);
		\path[->] (m-1-1) edge node[auto]{$H$}(m-2-1);
		\path[->] (m-2-1) edge (m-2-2);
		\path[->] (m-1-2) edge node[auto]{$H$}(m-2-2);
		
		\end{tikzpicture}
	\end{center}

	We have $\dim_{k}(U) - \dim_{k}(\sC) = \dim_{k}H$ and $\dim_{k(\sG)}R - \dim_{k(\sG)}\sG = \dim_{k}H$. Combining them with the equations $\dim_{k}R = \dim_{k}U$ (since $R$ is an open dense subscheme of $U$) and $\dim_{k}U = \trdeg_{k}k(U) = \trdeg_{k}(k(\sG)) + \dim_{k(\sG)}U$ we get the required formula.

\end{proof}

A parabolic vector bundle is said to be \emph{indecomposable} if it
cannot be written as a non-trivial direct sum of parabolic vector bundles.

\begin{remark}
	The category of parabolic vector bundles satisfies the bichain conditions
	in \cite{atiyah}. In particular by Lemma 6 in \emph{loc. cit.}, every 
	endomorphism of an indecomposable bundle is either nilpotent or an
	automorphism.
\end{remark}

\begin{corollary} \label{trdeg}
	Assume that $K$ is algebraically closed and that 
	 $\sE$ is an indecomposable vector bundle with parabolic datum $\bn$ on $\sX_{K}$.
	 Take $\phi$ to be a general element of the Jacobson radical $j(\sE)$.
	  Let $r_{i}$ be the rank of $\im(\phi^{i-1})/\im(\phi^{i})$.   Then 
	\begin{equation*}
	\trdeg_{k}k(\sE) \leq 1 + (g-1) \sum_{i} r_{i}^2 + \sum_{j} \dim_{K}\Flag_{\bn_{j}}(\sE|_{p_{j}})
	\end{equation*}
\end{corollary}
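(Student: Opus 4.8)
The plan is to deduce this from the dimension formula for $\Nil_{n,\sX}$ in Theorem \ref{t:dimension}, rather than from the stack $\Bun^{r,d}_{\bn}$ directly. First note that since $\sE$ is indecomposable and $K=\bar k$, the ring $\End_K(\sE)$ is a finite–dimensional local $K$–algebra with residue field $K$, so by the preceding remark (Atiyah's bichain condition) $j(\sE)$ is exactly the set of nilpotent endomorphisms; it is a $K$–linear subspace with $\dim_K j(\sE)=\dim_K\End(\sE)-1$, in particular irreducible. Fix $n$ with $\psi^n=0$ for all $\psi\in j(\sE)$, say $n=\rk(\sE)$. Enlarging $K$ changes neither the field of moduli of $\sE$, nor $\End(\sE)$, nor any term in the statement, so I may assume $K$ is large enough that there is a \emph{general} $\phi\in j(\sE)(K)$, i.e. one mapping to the generic point of $j(\sE)$; let $r_i$ and $\bn_j^{(i)}$ be the rank and parabolic datum of $\gr^i\sE:=\im(\phi^{i-1})/\im(\phi^i)$. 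Then $(\sE,\phi)$ is a $K$–point of $\Nil_{n,\sX}$ and Theorem \ref{t:dimension} gives $N:=\dim_{(\sE,\phi)}\Nil_{n,\sX}=(g-1)\sum_i r_i^2+\sum_i\sum_j\dim_K\Flag_{\bn_j^{(i)}}(\gr^i\sE|_{p_j})$.

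The heart of the argument is the identity $\trdeg_k k(\sE)=\dim_k\overline{\{(\sE,\phi)\}}+1$. To obtain it I would apply the dimension count of the lemma preceding this corollary — which applies verbatim, $\Nil_{n,\sX}$ being (locally) a quotient stack — to the points $(\sE,\phi)$ and $[\sE]$:
\[
\trdeg_k k(\sE,\phi)=\dim_k\overline{\{(\sE,\phi)\}}+\dim_K\Aut(\sE,\phi),\qquad \trdeg_k k(\sE)=\dim_k\sC+\dim_K\End(\sE),
\]
where $\sC=\overline{\{[\sE]\}}\subseteq\Bun^{r,d}_{\bn}$, $k(\sE,\phi)$ is the field of moduli of the pair, and $\Aut(\sE,\phi)=Z_{\Aut(\sE)}(\phi)$ is the centraliser of $\phi$ in $\Aut(\sE)$. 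The forgetful morphism $\Nil_{n,\sX}\to\Bun^{r,d}_{\bn}$ carries the residual gerbe of $(\sE,\phi)$ into that of $[\sE]$, so $k(\sE)\subseteq k(\sE,\phi)$; and its fibre over $[\sE]$ is the quotient stack $[j(\sE)/\Aut(\sE)]$ for the conjugation action, which is irreducible of dimension $\dim_K j(\sE)-\dim_K\Aut(\sE)=-1$. Since $\phi$ is general, $(\sE,\phi)$ is the generic point of this fibre, so a further application of the lemma, relative to $k(\sE)$, yields $\trdeg_{k(\sE)}k(\sE,\phi)=\dim_k[j(\sE)/\Aut(\sE)]+\dim_K\Aut(\sE,\phi)=\dim_K\Aut(\sE,\phi)-1$. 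Subtracting this from $\trdeg_k k(\sE,\phi)=\trdeg_k k(\sE)+\trdeg_{k(\sE)}k(\sE,\phi)$, the contributions of $\dim_K\Aut(\sE,\phi)$ cancel and the claimed identity drops out.

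To finish: $\overline{\{(\sE,\phi)\}}$ is a closed substack of $\Nil_{n,\sX}$ with generic point $(\sE,\phi)$, so $\dim_k\overline{\{(\sE,\phi)\}}\le N$. It then remains to bound $N$ by the right–hand side, i.e. to prove the elementary inequality $\sum_i\dim_K\Flag_{\bn_j^{(i)}}(\gr^i\sE|_{p_j})\le\dim_K\Flag_{\bn_j}(\sE|_{p_j})$ for each $j$. This is pure linear algebra on the fibre $V=\sE|_{p_j}$ equipped with its parabolic filtration $V_\bullet$ and the filtration $W^\bullet$ by images of $\phi|_V$ (which preserves $V_\bullet$): writing $d^{(i)}_l$ for the bigraded dimensions, one has $\sum_i d^{(i)}_l=\dim\gr^l_V$, and since $\dim\Flag_{\bn_j}(V)=\sum_{l<l'}(\sum_i d^{(i)}_l)(\sum_{i'}d^{(i')}_{l'})\ge\sum_i\sum_{l<l'}d^{(i)}_l d^{(i)}_{l'}=\sum_i\dim\Flag_{\bn_j^{(i)}}$, regrouping by the $\phi$–filtration only decreases the total (torsion contributions of $\gr^i\sE$, per Remark \ref{torsion}, can only help). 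Combining everything gives $\trdeg_k k(\sE)=\dim_k\overline{\{(\sE,\phi)\}}+1\le N+1\le 1+(g-1)\sum_i r_i^2+\sum_j\dim_K\Flag_{\bn_j}(\sE|_{p_j})$.

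The step I expect to be delicate is precisely the identity $\trdeg_k k(\sE)=\dim_k\overline{\{(\sE,\phi)\}}+1$: it requires careful bookkeeping of fields of moduli along $\Nil_{n,\sX}\to\Bun^{r,d}_{\bn}$, the computation that the fibre over $[\sE]$ is $[j(\sE)/\Aut(\sE)]$ of dimension $-1$, and making precise that "$\phi$ general" means the generic point of that fibre (which is why one must first enlarge $K$). Everything else — the setup, the citation of Theorem \ref{t:dimension}, and the flag–variety inequality — is routine.
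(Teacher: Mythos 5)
Your proposal is correct in substance and follows the same overall strategy as the paper: both arguments hinge on the smoothness/dimension formula for $\Nil_{n,\sX}$ (Theorem \ref{t:dimension}), the dimension lemma $\dim_k\sC=\trdeg_k k(\sE)-\dim_K\End(\sE)$, and the flag-variety inequality $\sum_i\dim\Flag_{\bn_j^{(i)}}\le\dim\Flag_{\bn_j}(\sE|_{p_j})$. The difference is in how the link between $\trdeg_k k(\sE)$ and the nilpotent pair is organized. The paper avoids residual-gerbe bookkeeping altogether: it takes $\mathcal{N}\subset\Nil_{n,\sX}$ to be the closure of \emph{all} pairs $(\sE,\phi)$ with $\phi\in j(\sE)$ of the prescribed ranks, and uses that the forgetful map $\mathcal{N}\to\mathcal{C}$ has fibre over the $K$-point given by $\sE$ a dense open subscheme of $j(\sE)$; combined with the lemma this gives the one-line inequality $\dim_k\mathcal{N}\ge\dim_k\mathcal{C}+\dim_K j(\sE)=\trdeg_k k(\sE)-1$, which is all that is needed. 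You instead prove the sharper identity $\trdeg_k k(\sE)=\dim_k\overline{\{(\sE,\phi)\}}+1$ by applying the lemma both in $\Nil_{n,\sX}$ and in $\Bun^{r,d}_{\bn}$ and computing the fibre over the residual gerbe $\fG(\sE)$ as a form of $[j(\sE)/\Aut(\sE)]$ of dimension $-1$; the two computations are equivalent (the discrepancy between the atlas-point fibre $j(\sE)$ and the gerbe fibre $[j(\sE)/\Aut(\sE)]$ is exactly the $\dim_K\Aut$ term that your subtraction cancels), but your route requires the extra care you yourself flag: that $k(\sE)\subseteq k(\sE,\phi)$, that the field of moduli of $(\sE,\phi)$ computed in the fibre agrees with the one in $\Nil_{n,\sX}$ (this follows since $\fG(\sE)\to\Bun^{r,d}_{\bn}$ is a monomorphism), and that a general $\phi$ hits the generic point of the fibre after enlarging $K$. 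The paper's version buys simplicity by only claiming an inequality; yours buys an equality at the cost of this bookkeeping.

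The one step where your sketch is genuinely too quick is the flag inequality, which the paper does not prove but imports from \cite[11.1]{nicole}. Your reduction to ``pure linear algebra on the fibre $V=\sE|_{p_j}$'' is not quite right as stated: the parabolic datum $\bn_j^{(i)}$ of $\im(\phi^{i-1})/\im(\phi^i)$ is defined sheaf-theoretically (via the free part of the subquotient, Remark \ref{torsion}), and $\im(\phi^{i})|_{p_j}$ need not coincide with the image of $(\phi|_V)^{i}$, since restriction to a point is not left exact and the subquotients may acquire torsion precisely at the parabolic points. So the bigraded dimensions $d^{(i)}_l$ you use are not simply read off from a bifiltration of $V$, and the assertion that torsion ``can only help'' needs an argument (for instance that $\sum_i d^{(i)}_l\le n_{j,l-1}-n_{j,l}$ for each $l$). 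The regrouping inequality itself is fine; either supply this comparison or, as the paper does, cite \cite[11.1]{nicole}, and your proof is complete.
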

\begin{proof}
	 By the above remark we have
	  $\End_{\sX}(\sE)/j(\sE) = K$, notice that $K$ is algebraically closed.
	
	Let $\mathcal{C}$ be the closure of a point given by ${\sE}$ in $\Coh_{\sX_{K}}$. By the previous lemma,
	$$
	\dim_{k}\mathcal{C} = \trdeg_{k}k({\sE}) - \dim_{K}\End_{\sX}({\sE}) 
$$	
	We can find a natural number so that $(j(\sE))^{n} = 0$.
	 Let $\mathcal{N} \subset \Nil_{n, \sX}$ be the closure of the points $(\sE,\phi)$ with $\phi \in j(\sE)$ such that each of the sheaves $\im(\phi^{i-1})/\im(\phi^{i})$ has rank $r_{i}$. 
	
	There is a forgetful morphism $\mathcal{N} \rightarrow \mathcal{C}$ whose generic fiber 
	 an open dense subscheme of $j(\sE)$.  So we have,	
	$$
	\dim_{k}{\mathcal{N}} \geq \dim_{k}{\mathcal{C}} + \dim_{K}{j(\sE)} 
	= \dim_{k}{\mathcal{C}} + \dim_{K}\End_{\sX}(\sE) -1 
	=   \trdeg_{k}k({\sE}) - 1
	$$
	From the previous theorem, \ref{t:dimension}, and \cite[11.1]{nicole} we have, 
	$$ \trdeg_{k}k({\sE}) \leq 1 + (g-1) \sum_{i=1}^{n} r_{i}^{2} + \sum_{j} \dim_{K}\Flag_{\bn_{j}}(\sE|_{p_{j}})
	$$
	
\end{proof}

\begin{remark}
	We have that $k(E) = k(E \otimes_{K}L)$ for any field $L \supset K$.
\end{remark}

\begin{lemma}\label{trdeg2}
	We assume that $g(X)\ge2$.
	Let $\sE$ be a vector bundle of rank $r$, degree $d$ and parabolic datum $\bn$ over $\sX_{K}$. If $\sE$ is not simple,
	in other words $\sE$ has an endomorphism that is not a scalar,
	  then
	$$
	\trdeg_{k}(k(\sE)) \leq (g-1)(r^{2} - r) + 2 + \sum_{i} \dim_{K}\Flag_{\bn_{i}}(\sE|_{p_{i}})
	$$
\end{lemma}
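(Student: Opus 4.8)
The plan is to reduce to an algebraically closed base field and then split according to whether $\sE$ is indecomposable, invoking Corollary~\ref{trdeg} in both halves. For the reduction: using the remark that $k(\sE)=k(\sE\otimes_K L)$ for any $L\supset K$, and observing that the right hand side of the asserted inequality is stable under extension of the base field (the rank $r$, the genus $g$, and each $\dim_K\Flag_{\bn_i}(\sE|_{p_i})$ are unchanged), I would replace $\sE$ by $\sE\otimes_K\bar K$ and assume $K=\bar K$; non-simplicity is preserved since $\End(\sE\otimes_K\bar K)=\End(\sE)\otimes_K\bar K$. Note also that a rank one parabolic bundle is simple, so $r\ge 2$.

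\emph{Case 1: $\sE$ indecomposable.} Then $\End(\sE)$ is local with residue field $K$ (the remark on Atiyah's bichain conditions, $K$ being algebraically closed), and $j(\sE)\ne 0$ because $\sE$ is not simple, so a general $\phi\in j(\sE)$ is a nonzero nilpotent endomorphism. Corollary~\ref{trdeg} gives
\[
\trdeg_k k(\sE)\le 1+(g-1)\sum_i r_i^2+\sum_i\dim_K\Flag_{\bn_i}(\sE|_{p_i}),\qquad \sum_i r_i=r,
\]
with $r_i=\rk(\im\phi^{i-1}/\im\phi^{i})$. Since $\phi\ne 0$ and $\sE$ is torsion free, no single $r_i$ equals $r$: if $r_1=r$ then $\phi=0$, while if $r_j=r$ for some $j\ge 2$ then $\im\phi^{j-1}$ has full rank, so $\phi$, hence $\phi^j$, is injective and $\im\phi^j$ has rank $r$, a contradiction. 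Thus $\max_i r_i\le r-1$, so $\sum_i r_i^2\le (r-1)^2+1=r^2-2r+2$; and for $g\ge 2$, $r\ge 2$ one has $1+(g-1)(r^2-2r+2)\le (g-1)(r^2-r)+2$, which is the claim.

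\emph{Case 2: $\sE$ decomposable.} Write $\sE\cong\bigoplus_{k=1}^{s}(\sE^{(k)})^{\oplus m_k}$ with $\sE^{(1)},\dots,\sE^{(s)}$ pairwise non-isomorphic indecomposable bundles (Krull--Schmidt holds for parabolic bundles), put $r^{(k)}=\rk\sE^{(k)}$, so $\sum_k m_k r^{(k)}=r$ and $\sum_k m_k\ge 2$, and let $\bn^{(k)}_i$ be the parabolic datum of $\sE^{(k)}$ at $p_i$. For each $k$ pick a finite extension $F_k$ of $k(\sE^{(k)})$ over which $\sE^{(k)}$ is defined; over a compositum $F=F_1\cdots F_s$ all the $\sE^{(k)}$, hence $\sE=\bigoplus_k(\sE^{(k)})^{\oplus m_k}$, are defined, so $k(\sE)$ lies in a finite extension of $F$ and $\trdeg_k k(\sE)\le\sum_{k}\trdeg_k k(\sE^{(k)})$. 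Each $\sE^{(k)}$ is indecomposable, so Corollary~\ref{trdeg}, after bounding the relevant sum of squares of graded-piece ranks by $(r^{(k)})^2$, gives $\trdeg_k k(\sE^{(k)})\le 1+(g-1)(r^{(k)})^2+\sum_i\dim_K\Flag_{\bn^{(k)}_i}(\sE^{(k)}|_{p_i})$. Since $\bn_i=\sum_k m_k\,\bn^{(k)}_i$ componentwise, and since the formula $\dim\Flag_{\bn}=\sum_j n_j(n_{j-1}-n_j)$ from the proof of Proposition~\ref{p:shom} yields superadditivity $\dim\Flag_{\bn'+\bn''}\ge\dim\Flag_{\bn'}+\dim\Flag_{\bn''}$, we get $\sum_k\sum_i\dim_K\Flag_{\bn^{(k)}_i}(\sE^{(k)}|_{p_i})\le\sum_i\dim_K\Flag_{\bn_i}(\sE|_{p_i})$. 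It then remains to verify the combinatorial inequality
\[
s+(g-1)\sum_{k=1}^{s}(r^{(k)})^2\le (g-1)(r^2-r)+2,
\]
valid because $\sum_k m_k\ge 2$; I would split into the subcase where all $m_k=1$ (then $r=\sum_k r^{(k)}\ge s\ge 2$ and the claim reduces to $2\sum_{k<l}r^{(k)}r^{(l)}-r\ge s-2$, which follows since the left side, as a function of positive integers, is minimized at all $r^{(k)}=1$, giving $s(s-2)\ge s-2$), and the subcase where some $m_k\ge 2$ (then $r\ge 1+\sum_k r^{(k)}$, giving ample room).

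\emph{Expected difficulty.} The indecomposable case is essentially a direct consequence of Corollary~\ref{trdeg}. The substance is in the decomposable case: the estimate must be routed through fields of definition of the individual summands, because the cruder bound obtained from the closure of $[\sE]$ inside $\Bun^{r,d}_{\bn}$ (for instance via the image of the direct sum morphism $\prod_k\Bun^{r^{(k)},d^{(k)}}_{\bn^{(k)}}\to\Bun^{r,d}_{\bn}$) must be corrected by $\dim_K\End(\sE)$, which is large precisely when $\sE$ is highly decomposable, and this correction destroys the gain; and one must then check the displayed combinatorial inequality, which is sharp (equality holds for a direct sum of two non-isomorphic line bundles) and genuinely uses $g\ge 2$.
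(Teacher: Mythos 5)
Your proposal is correct and follows essentially the same route as the paper: reduce to $K=\bar K$, decompose via Krull--Schmidt, apply Corollary~\ref{trdeg} to each indecomposable piece, use superadditivity of flag dimensions and subadditivity of transcendence degrees, and finish with a combinatorial inequality exploiting that the resulting partition of $r$ has at least two parts. The only differences are organizational — you split into indecomposable versus decomposable cases where the paper treats both at once via ``the sum $\sum_{i,\alpha}r_{i\alpha}=r$ has at least two terms,'' and you spell out the flag superadditivity and the final numerical estimate (which the paper cites to \cite{nicole} and compresses) in more detail.
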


\begin{proof}
	By the above remark we may assume $K$ is algebraically closed. Hence by Krull-Schmidt $\sE$ can be written as a direct sum of indecomposable vector bundles $\sE_{\alpha}$  over $\sX_{K}$ of rank $r_{\alpha} \geq 1$ and parabolic data $\bn^{(\alpha)} = (\bn_{1}^{(\alpha)} \ldots \bn_{l}^{(\alpha)})$, where $\bn_{j}^{(\alpha)}$ is the parabolic datum at the point $p_{j}$ . The above corollary says that 
	$$
	\trdeg_{k}k(\sE_{\alpha}) \leq 1 + (g-1) \sum_{i} r_{i \alpha}^2 + \sum_{j}\dim_{K}(\Flag_{\bn_{j}^{(\alpha)}}(\sE_{\alpha}|_{p_{j}}))    
	$$
	
	for some integers $r_{i\alpha} \geq 1$ such that $\sum_{i} r_{i\alpha} = r_{\alpha}$. 	
	We also have that 
	$$\sum_{\alpha} \sum_{j} \dim_{K}\Flag_{\bn_{j}^{(\alpha)}}(\sE_{\alpha}|_{p_{j}}) \leq \sum_{j} \dim_{K}\Flag_{\bn_{j}}(\sE|_{p_{j}}),$$ see \cite[11.1]{nicole}.	
	Using $$
	\trdeg_{k}k(\sE) \leq \sum_{j}\trdeg_{k}k(\sE_{j})
	$$	
	we have 	
	$$
	\trdeg_{k}k(\sE) \leq \sum_{j}1 + (g-1) \sum_{i,\alpha}r_{i\alpha}^2 + \sum_{j} \dim_{K}\Flag_{\bn_{j}}(\sE|_{p_{j}})
	$$	
Note that the sum $\sum_{i,\alpha}r_{i\alpha} = r$ has at least two terms, (cf \cite[6.5]{crelle}).	
	Hence 
	$$
	\trdeg_{k}k(\sE) \leq (g-1)(r^2-r) + \sum_{j} \dim_{K}\Flag_{\bn_{j}}(\sE|_{p_{j}}) + 2 - (g-2)(r-2)
	$$
	$r \geq 2$ and $g \geq 2$ implies $(g-2)(r-2) \geq 0$. 	
	So, $$
	\trdeg_{k}k(\sE) \leq (g-1)(r^2 - r) + \sum_{j} \dim_{K}\Flag_{\bn_{j}}(\sE|_{p_{j}}) + 2
	$$
	
\end{proof}

\section{Essential dimension on curves}

In this setion $X$ is a smooth projective, geometrically connected curve over $k$ with
$X(k)\ne\varnothing$.
 We fix some closed points $p_1,\ldots, p_l$ with $\deg p_i=f_i$. We will consider vector
 bundles of rank $r$ and degree $d$ and  ramification indices $e_i$ at each $p_i$.
We fix parabolic data $[r=n_{i0}\ge n_{i1} \ge \ldots \ge n_{ie_i}=0]=\bn_i$ at each $p_i$.
Let $\bn = ((p_1,\bn_1),\ldots, (p_l,\bn_l)$ be the corresponding parabolic datum.
The arguments in this section are modeled by those in \cite{crelle}.

A parabolic vector bundle $\sF$ on $X_K$ is said to be \emph{simple} if 
$\End(\sF)=K$. If $\sF$ is simple then the morphism from the residual gerbe to its moduli space 
$$ \sG(\sF)\rightarrow \Spec(k(\sF))$$
is banded by $\Gm$. As a generic parabolic bundle is simple this is in
fact the generic gerbe of the moduli stack. We wish to understand the index
of this gerbe.

\begin{proposition}\label{p:index} In the above situation,
	set  $h = \gcd(r,d,n_{ij})$. Then $h=\ind(\sG(\sF))$ where $\sG(\sF)$ is the generic gerbe.
\end{proposition}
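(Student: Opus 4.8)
The plan is to realise $\sG(\sF)$ as a $\Gm$-gerbe over its field of moduli $K:=k(\sF)$, hence as a Brauer class $\alpha\in\operatorname{Br}(K)$ with $\ind\sG(\sF)=\ind(\alpha)$ (compare \ref{e:gerbe}), and then to prove the two divisibilities $\ind(\alpha)\mid h$ and $h\mid\ind(\alpha)$. Here $\sF$ is the generic simple parabolic bundle, so $K$ is the function field of the stable locus $M^{\mathrm{s}}$ of the coarse moduli space of parabolic bundles with datum $(r,d,\bn)$; by \ref{t:main2} one may equivalently read $\ind(\alpha)$ off the Schur index of the semisimple algebra $A/\mathrm{rad}(A)$ attached to $\sF$.

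\textbf{The bound $\ind(\alpha)\mid h$.} Restricting the universal parabolic bundle on $\sX\times\Bun^{r,d}_{\bn}$ to $\sX_{K}\times\sG(\sF)$ yields, because $\sG(\sF)$ is a $\Gm$-gerbe of class $\alpha$ over $K$, an $\alpha$-twisted parabolic vector bundle $\mathscr U$ of rank $r$ on $\sX_{K}$. I will use the standard facts that the rank of an $\alpha$-twisted vector bundle on a connected $K$-scheme, and the Euler characteristic of any $\alpha$-twisted coherent sheaf on a proper $K$-scheme, are divisible by $\ind(\alpha)$. Restricting $\mathscr U$ to a $k$-rational point of $X$ (available since $X(k)\neq\varnothing$) gives $\ind(\alpha)\mid r$; computing $\chi(\mathscr U)$, $\chi(\mathscr U\otimes\sN_i^{-a})$ and $\chi(\mathscr U\otimes q^{*}\sO_X(x_0))$ by \ref{c:general} and combining with $\ind(\alpha)\mid r$ gives $\ind(\alpha)\mid\gcd(r,d)$; and the torsion subquotients of the parabolic filtration of $\mathscr U$ at $p_i$ (via \ref{t:correspondence}), whose lengths involve the $n_{ij}$, give $\ind(\alpha)\mid n_{ij}$ — for a parabolic point of residue degree $f_i>1$ after first passing to $\kappa(p_i)$, over which $p_i$ acquires a rational point, and descending. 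Altogether $\ind(\alpha)\mid\gcd(r,d,n_{ij})=h$.

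\textbf{The bound $h\mid\ind(\alpha)$.} Following the method of \cite{crelle}, the cleanest route is to show that $\alpha$ has order divisible by $h$ in $\operatorname{Br}(K)$; combined with $\ind(\alpha)\mid h$ from above and $\mathrm{per}(\alpha)\mid\ind(\alpha)$ this forces $\ind(\alpha)=h$. One computes the order of $\alpha$ from determinant-of-cohomology line bundles: $R\pi_{*}$ of twists of the universal family produces $\alpha$-twisted perfect complexes on $M^{\mathrm{s}}$ whose determinants are honest line bundles, and these generate a sublattice of $\operatorname{Pic}(M^{\mathrm{s}})$; using the (parabolic analogue of the Drezet--Narasimhan) description of $\operatorname{Pic}(M^{\mathrm{s}})$ one identifies the index of that sublattice — hence the order of the descent obstruction $\alpha$ — with $h$. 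Equivalently, one may exhibit a single point $z\in M^{\mathrm{s}}$ whose residual gerbe has index exactly $h$ and invoke semicontinuity of the Brauer index under specialization ($\alpha$ is unramified on the smooth $M^{\mathrm{s}}$); the model for such a point is a twist of $\mathscr G\otimes\bar k^{h}$, with $\mathscr G$ a generic rank-$r/h$ parabolic bundle of datum $(r/h,d/h,\bn/h)$, by a degree-$h$ symbol algebra over a suitable $\bar k(t_{1},\ldots,t_{m})$ with flags compatible with the twist, refined so as to land in the stable locus while keeping the descent obstruction of index $h$. Either way one gets $h\mid\operatorname{ord}(\alpha)\le\ind(\alpha)$, so $\ind\sG(\sF)=\ind(\alpha)=h$.

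\textbf{Main obstacle.} I expect the lower bound to be the genuinely delicate part: pinning the order of $\alpha$ via $\operatorname{Pic}(M^{\mathrm{s}})$ requires the parabolic Picard computation, and the alternative explicit construction must produce a \emph{stable} — not merely polystable — parabolic bundle carrying a degree-$h$ worth of internal symmetry (a naive $h$-fold isotypic bundle lies outside $M^{\mathrm{s}}$), together with the semicontinuity transfer to the generic point. The upper bound is routine apart from the descent bookkeeping at parabolic points of residue degree greater than one.
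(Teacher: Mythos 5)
Your upper bound $\ind(\alpha)\mid h$ is essentially the paper's own argument: restrict the universal parabolic bundle to the gerbe (using $X(k)\neq\varnothing$) to get twisted sheaves of rank $r$ and of ranks governed by the $n_{ij}$, and use a pushforward/Euler-characteristic computation via \ref{t:stackRR} together with Lieblich's divisibility of ranks of twisted sheaves by the index, as in \cite{lieblich}. (Both you and the paper are terse about parabolic points of residue degree $f_i>1$; your ``pass to $\kappa(p_i)$ and descend'' as written only yields divisibility of $\ind$ into $f_i n_{ij}$ over $K$ itself, so this step deserves more care, but it is not the main issue.)

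The genuine gap is the lower bound $h\mid\ind(\alpha)$: you do not prove it, you propose two strategies and yourself flag them as the main obstacle. Route (a) (period computation via determinant-of-cohomology classes and a parabolic Drezet--Narasimhan description of $\Pic(M^{\mathrm s})$) is in substance the content of \cite{dey}, which is a nontrivial theorem you would have to either prove or cite, and which is only available in characteristic $0$; route (b) requires constructing a \emph{stable} parabolic bundle whose residual gerbe has index exactly $h$, which you do not carry out and which is delicate for exactly the reason you name. The paper closes this step differently and much more cheaply: it passes to the substack $\Bun^{r,d}_{\bn,\det(\sF_0)}$ of parabolic bundles with fixed determinant, observes that the parabolic stack is a flag/Grassmann bundle over the stack of ordinary bundles, and quotes \cite[Theorem 6.1]{norbert} together with \cite{gabber} (valid in any characteristic) to conclude that the generic gerbe of the fixed-determinant parabolic moduli has index $h$; since the index can only drop under base change/specialization of the (unramified) class on the simple locus, the generic gerbe of $\Bun^{r,d}_{\bn}$ must already have index $h$, which combined with your upper bound gives equality. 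So your proposal is sound in outline and its specialization idea parallels the paper's transfer step, but as it stands the lower bound rests on an unexecuted Picard computation or an unexecuted explicit construction, i.e.\ the key input that the paper supplies by citation (Hoffmann/Balaji--Biswas--Gabber--Nagaraj, or \cite{dey} in characteristic $0$) is missing.
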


\begin{proof}
 The index of this gerbe divides $h = \gcd(r,d,n_{ij})$.
 The proof will make use of twisted sheaves and we refer the reader
 to \cite{lieblich} for an introduction and their relationship to 
 the index. 
The universal parabolic bundle on 
\[
\Bun^{r,d}_{\bn}\times X
\]
produces twisted sheaves of ranks $r$ and $n_{ij}$ on this stack.

We restrict them  to the gerbe using the fact that $X(k)\ne\varnothing$.
By \cite{lieblich} we see that the index divides $r$ and the $n_{ij}$.
We have another twisted sheaf obtained by taking $\pi_*$ of a sufficiently ample
twist of the universal bundle. Its rank is computed by Riemann-Roch, see \ref{t:stackRR}, and
hence the index divides $h$.

Now choose a particular simple parabolic bundle $\sF_0$. 
We can consider the moduli stack of $\Bun^{r,d}_{\bn, \det(\sF_0)}$  of parabolic bundles where the underlying bundle has determinant $\det(\sF)$. The stack $\Bun^{r,d}_{\bn}$ is a Grassman bundle over the moduli stack of ordinary vector bundles and hence
by
\cite[Theorem 6.1]{norbert} and \cite{gabber} the generic gerbe of
$\Bun^{r,d}_{\bn, \det(\sF_0)}$ has index $h$. As the index can only drop by
base change, it follows that our original gerbe had index $h$.
In the case of a ground field of characteristic 0, one could apply the main theorem of \cite{dey}.
\end{proof}

\begin{proposition}
	Let $\sF$ be a simple parabolic vector bundle with rank $r$, degree $d$ and
	specified parabolic data. Then
	$$ \ed_k \sF\le  r^2(g-1)+1+\sum_{i=1}^l f_i\dim_{k(p_i)} \Flag_{\bn_i}
	+ \sum_{p|h} p^{v_p(h)}-1 $$
	and
	$$
	\ed_{k,p}\sF =   r^2(g-1)+1+\sum_{i=1}^l f_i\dim_{k(p_i)} \Flag_{\bn_i}
	+  p^{v_p(h)}-1.
	$$
\end{proposition}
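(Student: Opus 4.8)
The plan is to split the essential dimension of $\sF$ into the transcendence degree of its field of moduli and the essential dimension of its residual gerbe, and to control each separately. Writing $\sG(\sF)$ for the residual gerbe of $\sF$ in $\Bun^{r,d}_{\bn,X}$ and $k(\sF)$ for the field of moduli, the standard ``going down'' inequality for residual gerbes, recalled from \cite{crelle} (with the prime-to-$p$ variant from \cite{merkurjev}), gives
\[
\ed_k\sF\le \trdeg_k k(\sF)+\ed_{k(\sF)}\sG(\sF),\qquad
\ed_{k,p}\sF\le \trdeg_k k(\sF)+\ed_{k(\sF),p}\sG(\sF).
\]
So it suffices to bound the two terms on the right for the upper bounds, and then, for the equality in the $p$-case, to produce a matching lower bound by choosing $\sF$ generic.

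First I would bound $\trdeg_k k(\sF)$. Since $\sF$ is simple, $\End_{\sX_K}(\sF)=K$, so the lemma preceding \ref{trdeg}, applied to the closure $\sC\subseteq \Bun^{r,d}_{\bn,X}$ of the point defined by $\sF$, gives $\trdeg_k k(\sF)=\dim_k\sC+1\le \dim_k\Bun^{r,d}_{\bn,X}+1$. The dimension of $\Bun^{r,d}_{\bn,X}$ is computed by Riemann--Roch: at a parabolic sheaf $\sE$ the tangent space is $\Ext^1(\sE,\sE)$, the obstruction space vanishes, and the automorphisms are $\End(\sE)$, so $\dim_k\Bun^{r,d}_{\bn,X}=-\chi(\sE,\sE)=r^2(g-1)+\sum_{i=1}^l f_i\dim_{k(p_i)}\Flag_{\bn_i}$ by \ref{l:coherent} (combined with the residue-degree weighting of \ref{c:general}). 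Hence
\[
\trdeg_k k(\sF)\le r^2(g-1)+1+\sum_{i=1}^l f_i\dim_{k(p_i)}\Flag_{\bn_i},
\]
with equality when $\sF$ is a generic parabolic bundle, since $\Bun^{r,d}_{\bn,X}$ is irreducible.

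Next I would bound the gerbe term. As $\sF$ is simple, $\sG(\sF)\to\Spec k(\sF)$ is banded by $\Gm$. Restricting the universal twisted sheaves of ranks $r$ and $n_{ij}$ (and a Riemann--Roch twist thereof, using \ref{t:stackRR}) to the gerbe along a $k$-point of $X$ shows $\ind\sG(\sF)$ divides $h=\gcd(r,d,n_{ij})$, with equality for the generic gerbe by \ref{p:index}. Feeding this into \ref{e:gerbe} yields the unconditional upper bound $\ed_{k(\sF)}\sG(\sF)\le\sum_{p\mid h}(p^{v_p(h)}-1)$, and the exact value $\ed_{k(\sF),p}\sG(\sF)=p^{v_p(h)}-1$: removing the prime-to-$p$ part of the Brauer class by a prime-to-$p$ extension reduces to index $p^{v_p(h)}$, a prime power, for which essential $p$-dimension and essential dimension coincide and equal $p^{v_p(h)}-1$ by the known case of \ref{e:gerbe}. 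Combining with the transcendence-degree bound gives both displayed upper bounds in the statement.

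For the equality in essential $p$-dimension I would take $\sF$ to be a generic parabolic bundle, so that the transcendence-degree bound above is an equality and $\ind\sG(\sF)=h$, with the index not dropping under any field extension by \ref{p:index} together with \cite{norbert} and \cite{gabber}. It then remains to prove the reverse inequality $\ed_{k,p}\sF\ge r^2(g-1)+1+\sum_i f_i\dim_{k(p_i)}\Flag_{\bn_i}+(p^{v_p(h)}-1)$, which amounts to showing that essential $p$-dimension is additive across the field of moduli for the generic residual $\Gm$-gerbe; this follows because that gerbe is versal, hence incompressible in the relevant sense, by the fibration/incompressibility arguments of \cite{crelle} adapted to the parabolic setting via the flag-variety fibration $\Bun^{r,d}_{\bn,X}\to\Bun^{r,d}_{X}$. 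This lower bound is the main obstacle: the upper bounds are essentially bookkeeping on top of the Riemann--Roch computation of Section \ref{s:rr} and \ref{e:gerbe}, whereas the lower bound requires the versality/incompressibility input and care that neither $\trdeg_k k(\sF)$ nor $\ind\sG(\sF)$ degenerates.
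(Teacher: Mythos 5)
Your proposal is correct and follows essentially the same route as the paper: decompose the essential dimension of $\sF$ across its residual gerbe, bound $\trdeg_k k(\sF)$ by the dimension of the moduli stack (which for a simple bundle is exactly what Corollary \ref{trdeg} yields with $\phi=0$), and handle the gerbe term via Proposition \ref{p:index} together with Example \ref{e:gerbe}. The only real difference is explicitness: the paper's proof is a one-line combination of those three results, whereas you additionally spell out the genericity/lower-bound considerations needed for the stated equality in essential $p$-dimension (which indeed only holds for the generic simple bundle), a point the paper leaves implicit.
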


\begin{proof}
	One combines the above proposition with  \ref{trdeg} and \ref{e:gerbe}.
\end{proof}

\begin{theorem}\label{t:main}
	Set $h = \gcd(r,d,n_{ij})$. We have
\[
\ed Bun^{r,d}_{\bn} \le  r^2(g-1)+1+\sum_{i=1}^l f_i\dim_{k(p_i)} \Flag_{\bn_i}
+ \sum_{p|h} p^{v_p(h)}-1.\]
Further,
$$
\ed_{p} Bun^{r,d}_{\bn} =   r^2(g-1)+1+\sum_{i=1}^l f_i\dim_{k(p_i)} \Flag_{\bn_i}
+  p^{v_p(h)}-1.
$$
When the main conjecture of \cite{ct} holds for $r$ then the first inequality is an 
equality.

\end{theorem}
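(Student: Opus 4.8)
\textbf{Proof plan for Theorem \ref{t:main}.}
The strategy is to reduce the essential ($p$-)dimension of the stack $\Bun^{r,d}_{\bn}$ to the essential ($p$-)dimension of individual parabolic bundles, handled by the preceding proposition, and then to upgrade the inequality to an equality by exhibiting a point (the generic point) achieving the bound. First I would observe that by the definition of essential dimension as a supremum over field-valued points,
\[
\ed \Bun^{r,d}_{\bn} = \sup_{\sF} \ed_k \sF,
\]
the supremum being over all parabolic bundles $\sF$ of the prescribed numerical type over all field extensions of $k$, and similarly for $\ed_p$. For the upper bound, I would split into two cases according to whether $\sF$ is simple. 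If $\sF$ is simple, the preceding proposition gives exactly the claimed bound. If $\sF$ is not simple, then I would combine the essential dimension of the residual gerbe, controlled by \ref{p:rank} (which gives $\ed_{k(\sF)}\fG(\sF)\le r-1$ and $\ed_{k(\sF),p}\fG(\sF)\le v_p(r)-1$), with the transcendence-degree bound \ref{trdeg2} for the field of moduli of a non-simple bundle, namely $\trdeg_k k(\sF)\le (g-1)(r^2-r)+2+\sum_i \dim_K\Flag_{\bn_i}(\sF|_{p_i})$. Adding these and using $g\ge 2$, $r\ge 1$ one checks the non-simple contribution is dominated by the simple one; the inequality $r-1\le \sum_{p\mid h}(p^{v_p(h)}-1)$ when $h=r$ together with $(g-1)(r^2-r)+2+(r-1) \le r^2(g-1)+1+ \sum_{p\mid h}(p^{v_p(h)}-1)$ reduces to $(g-1)r \ge r$, which holds. (The flag-variety term is handled by the superadditivity inequality $\sum_j\dim\Flag_{\bn_j^{(\alpha)}}\le \sum_j\dim\Flag_{\bn_j}$ cited from \cite[11.1]{nicole}.) So $\ed \Bun^{r,d}_{\bn}$ is bounded by the claimed quantity, and likewise for $\ed_p$.

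For the lower bound in the essential $p$-dimension statement I would take $\sF$ to be a \emph{generic} simple parabolic bundle, so that the residual gerbe $\sG(\sF)$ is the generic gerbe, which by \ref{p:index} has index $h=\gcd(r,d,n_{ij})$. Then $\ed_p \sG(\sF) = v_p(\ind \sG(\sF))-1 = v_p(h)-1$ by the exact formula in \ref{e:gerbe}. The field of moduli of a generic simple bundle has transcendence degree equal to the dimension of the coarse moduli space, which by the Euler-characteristic computation (Riemann--Roch on the root stack, \ref{t:stackRR}/\ref{c:general}, giving $\dim \Bun^{r,d}_{\bn} = r^2(g-1)+\sum_i f_i\dim_{k(p_i)}\Flag_{\bn_i}$ for the stack, hence one more for the coarse space of the generic gerbe minus the $\Gm$ of automorphisms) equals $r^2(g-1)+1+\sum_i f_i\dim_{k(p_i)}\Flag_{\bn_i}$. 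Here I must invoke the fibration-type inequality $\ed_p \sF \ge \trdeg_k k(\sF) + \ed_{p,k(\sF)}\sG(\sF)$ — this is the genericity/incompressibility argument: for the generic point, the field of moduli is as large as possible and the gerbe contribution adds on the nose. Summing gives the lower bound matching the upper bound, so the essential $p$-dimension is an equality. For the essential dimension, the same lower-bound argument applies verbatim except that $\ed\sG(\sF)$ is only \emph{conjecturally} equal to $\sum_{p\mid h}(p^{v_p(h)}-1)$, with equality known unconditionally when the index is a prime power or equals $6$; this is precisely where the hypothesis ``the main conjecture of \cite{ct} holds for $r$'' enters to promote the inequality to an equality.

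The main obstacle I anticipate is the lower-bound direction — specifically, justifying rigorously that for the generic simple parabolic bundle the transcendence degree of the field of moduli \emph{plus} the essential ($p$-)dimension of the residual gerbe is genuinely achieved, i.e. that these two contributions do not interfere and can be simultaneously realized. This is an incompressibility statement: one needs to know that a generic family cannot be compressed even after accounting for the gerbe, which is exactly the content of the lower-bound half of \cite[Theorem 6.1]{crelle} (or its analogue) combined with the computation of $\dim\Bun^{r,d}_{\bn}$ via \ref{t:stackRR}. The subtlety is ensuring the generic gerbe has index exactly $h$ rather than a proper divisor — handled by \ref{p:index}, whose proof in turn rests on the twisted-sheaf argument and the results of \cite{norbert}, \cite{gabber} — and that passing to the generic point does not drop the transcendence degree, which follows because $\Bun^{r,d}_{\bn}$ is a Grassmann bundle over $\Bun^{r,d}$ and the generic bundle is simple. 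Everything else is bookkeeping with the inequalities above.
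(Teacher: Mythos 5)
Your proposal is correct and follows essentially the same route as the paper: the upper bound via the simple/non-simple dichotomy using \ref{p:rank}, \ref{trdeg}, \ref{trdeg2} and the proposition on simple bundles, and the ($p$-)equality via the generic gerbe, whose index is $h$ by \ref{p:index}, together with the exact formula of \ref{e:gerbe} and the conjecture of \cite{ct}. Your only slip is stating the fibration-type lower bound $\ed_p\sF\ge \trdeg_k k(\sF)+\ed_{p,k(\sF)}\sG(\sF)$ for an individual object rather than as a statement about the stack obtained by taking the supremum over points of the generic gerbe, but since you immediately identify it with the genericity/incompressibility result underlying \cite{crelle} (which the paper also invokes, even more tersely), this does not change the argument.
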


\begin{proof}
	Using \ref{p:rank} and its proof that 
	\[ \ed_{k(\sF)} \sF \le r-1 \qquad \ed_{k(\sF),p} \sF \le v_p(r)-1 \]
	for every parabolic bundle.
	
	In the case where $\sF$ is not simple we can combine this remark with \ref{trdeg2}
	to obtain the result inequalities in the assertions of the theorem.
	
	The case of a simple bundle is the prior proposition.
	
	The conjecture of \cite{ct} relates the essential dimension of a gerbe
	to its index so that the equality is a consequence of \ref{p:index}.
\end{proof}

\end{document}